\newtheorem{thm}{Theorem}[section]
\newtheorem{cor}[thm]{Corollary}
\newtheorem{claim}[thm]{Claim}
\newtheorem {fact}[thm]{Fact}
\newtheorem*{thmstar}{Theorem}
\newtheorem{prop}[thm]{Proposition}
\newtheorem {lem}[thm]{Lemma}
\newtheorem{ques}[thm]{Question}
\theoremstyle{remark}
\newtheorem{rem}[thm]{Remark}
\newtheorem{np*}{Non-Proof}
\theoremstyle{definition}
\newtheorem{defn}[thm]{Definition}
\newtheorem{exam}[thm]{Example}
\newcommand{\pd}[2]{\frac{\partial #1}{\partial #2}}
\def\Ind{\setbox0=\hbox{$x$}\kern\wd0\hbox to 0pt{\hss$\mid$\hss} \lower.9\ht0\hbox to 0pt{\hss$\smile$\hss}\kern\wd0}
\def\Notind{\setbox0=\hbox{$x$}\kern\wd0\hbox to 0pt{\mathchardef \nn=12854\hss$\nn$\kern1.4\wd0\hss}\hbox to 0pt{\hss$\mid$\hss}\lower.9\ht0 \hbox to 0pt{\hss$\smile$\hss}\kern\wd0}
\def\ind{\mathop{\mathpalette\Ind{}}}
\def\trdeg{\operatorname{trdeg}}
\newcommand{\m}{\mathbb }
\newcommand{\mc}{\mathcal }
\newcommand{\Q}{\mathbb Q}
\newcommand{\tp}{\operatorname{tp}}
\newcommand{\gen}[1]{\left\langle#1\right\rangle}
\def \Q {{\mathbb Q}}
\def \hat {\widehat}
\def \bar{\overline}
\def \ACF0 {{\rm ACF}_0}
\def \tp {{\rm tp}}
\def\< {\Lngle}
\def \> {\rangle}
\def \minusdot{\hbox{\ {$-$} \kern -.86em\raise .2em \hbox{$\cdot \
$}}}
\def \tilde {\widetilde}
\def \deg { {\rm deg} }
\def\hat {\widehat}
 \def \| {\kern -.3em \restriction \kern -.3em}
 \def \< { \langle}
 \def \> { \rangle}
  \def \Aff {{\rm Aff}}
  \def\alg {{\rm alg}}
  \def \ld{{\rm ldim}}
  \font \small=cmsy10
\def \dnf{ \hbox{\ \ {\small\char '152}\kern -.65em \lower .4em \hbox{$\smile$}}}
\def \forks{\hbox{\ \ {\small \char '152} \kern -.7em \lower .1em
\hbox{\it /} \kern -1.1em \lower .4em \hbox{$\smile$}}}
\begin{document}

\makeatletter

\title{On the equations of Poizat and Li\'enard}

\@namedef{subjclassname@2020}{%
  \textup{2020} Mathematics Subject Classification}
\makeatother
\subjclass[2020]{34M15, 12H05, 03C60}

\author[J. Freitag]{James Freitag}
\address{James Freitag, University of Illinois Chicago, Department of Mathematics, Statistics,
and Computer Science, 851 S. Morgan Street, Chicago, IL, USA, 60607-7045.}
\email{jfreitag@uic.edu}

\author[R. Jaoui]{R\'emi Jaoui}
\address{Mathematisches Institut, Albert-Ludwigs-Universit\"at, Ernst-Zermelo-Str. 1, 79104 Freiburg}
\email{remi.jaoui@math.uni-freiburg.de}

\author[D. Marker]{David Marker}
\address{David Marker, University of Illinois Chicago, Department of Mathematics, Statistics,
and Computer Science, 851 S. Morgan Street, Chicago, IL, USA, 60607-7045.}
\email{marker@uic.edu}

\author[J. Nagloo]{Joel Nagloo}
\address{Joel Nagloo, University of Illinois Chicago, Department of Mathematics, Statistics,
and Computer Science, 851 S. Morgan Street, Chicago, IL, USA, 60607-7045.}
\email{jnagloo@uic.edu}

\thanks{J. Freitag is partially supported by NSF CAREER award 1945251 and the Fields Institute for Research in the Mathematical Sciences. R. Jaoui is partially supported by the ANR-DFG program GeoMod
(Project number 2100310201). D. Marker was partially supported by the Fields Institute for Research in the Mathematical Sciences.  J. Nagloo is partially supported by NSF grant DMS-2203508.}

\date{\today}

\maketitle

\begin{abstract} We study the structure of the solution sets in universal differential fields of certain differential equations of order two, the Poizat equations, which are particular cases of Liénard equations. We give a necessary and sufficient condition for strong minimality for equations in this class and a complete classification of the algebraic relations for solutions of strongly minimal Poizat equations. We also give an analysis of the non strongly minimal cases as well as applications concerning the Liouvillian and Pfaffian solutions of some Liénard equations.
\end{abstract} 

\tableofcontents

\section{Introduction}
Our manuscript deals with three prominent topics in algebraic differential equations and their connections to each other, especially interpreted in the context of rational planar vector fields with constant coefficients.

\subsection{Model theory} Strong minimality is an important notion emerging from stability theory, and in the context of differential equations, the notion has a concrete interpretation in terms of functional transcendence. The zero set of a differential equation, $X$, with coefficients in a differential field $K$ is \emph{strongly minimal} if and only if (1) the equation is irreducible over $K^{alg}$ and (2) given any solution $f$ of $X$ and any differential field extension $F$ of $K$, $$\text{trdeg}_F  F\langle f \rangle =  \text{trdeg}_K K \langle f \rangle \text{ or } 0.$$ 
Here $K \langle f \rangle $ denotes the differential field extension of $K$ generated by $f$. 

To the non-model theorist, it likely isn't obvious from the definition, but strong minimality has played a central role in the model theoretic approach to algebraic differential equations. Two factors seem to be important in explaining the centrality of the notion. First, once strong minimality of an equation is established, the trichotomy theorem, a model theoretic classification result, along with other model theoretic results can often be employed in powerful ways \cite{HPnfcp, nagloo2014algebraic}. Second, among nonlinear differential equations, the property seems to hold rather ubiquitously; in fact there are theorems to this effect in various settings \cite{devilbiss2021generic, jaoui2019generic}. Even for equations which are not themselves minimal, there is a well-known decomposition technique, \emph{semi-minimal analysis}\footnote{Definitions of model theroetic notions can be found in section \ref{prelim}.} \cite{moosa2014model}, which often allows for the reduction of questions to the minimal case. 

Establishing the notion has been the key step to resolving a number of longstanding open conjectures \cite{casale2020ax, nagloo2014algebraic}. Despite these factors, there are few enough equations for which the property has been established that a comprehensive list of such equations appears in \cite{devilbiss2021generic}. In this manuscript, we generalize results of Poizat \cite{poizat1977rangs} and Brestovski \cite{brestovski1989algebraic} by showing that

\begin{thmstar}
The set of solutions of
\begin{equation*}
z''={z'}f(z),\;\;\;\;z'\neq 0
\end{equation*}
where $f(z) \in \m C(z)$ is strongly minimal \emph{if and only if} $f(z)$ is not the derivative of some $g(z) \in \m C (z)$. 
\end{thmstar}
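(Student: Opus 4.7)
\emph{Proof plan.} Set $x = z$, $y = z'$; the equation becomes the autonomous planar vector field $v = y\,\partial_x + yf(x)\,\partial_y$ on $\{y \neq 0\} \subseteq \mathbb{A}^2$, whose trajectories satisfy $dy/dx = f(x)$ and hence lie on (multi-valued) graphs $y = F(x) + c$ where $F' = f$. The theorem amounts to asking when such graphs are algebraic.

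For the easy direction, suppose $f = g'$ with $g \in \mathbb{C}(z)$. For any constant $c$, a non-constant solution $\zeta$ of the first-order equation $z' = g(z) + c$ satisfies $\zeta'' = g'(\zeta)\zeta' = f(\zeta)\zeta'$, so $\zeta$ solves the Poizat equation. Choosing $c$ with $g(z) + c \not\equiv 0$ and $\zeta$ transcendental over $\mathbb{C}$ (which exists in a universal differential field), one obtains $\zeta' \neq 0$ and $\operatorname{trdeg}_{\mathbb{C}} \mathbb{C}\langle\zeta\rangle = 1$, directly witnessing the failure of strong minimality.

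For the converse, suppose strong minimality fails. Then over some differential field $F \supseteq \mathbb{C}$ there is a solution $\zeta$ with $\zeta' \neq 0$ and $\operatorname{trdeg}_F F\langle\zeta\rangle = 1$, yielding an irreducible algebraic relation $P(\zeta,\zeta') = 0$ over $F$ whose zero-set $C \subseteq \mathbb{A}^2_F$ is $v$-invariant. Since $v(x) = y$ no vertical line is $v$-invariant, and the only horizontal invariant component $y = 0$ is excluded; hence the projection $C \to \mathbb{A}^1_x$ is generically finite and on $C$ we can write $y = \psi(x)$ for some $\psi$ algebraic over $F(x)$, and the invariance condition $v(P) \equiv 0 \pmod P$ translates into $\psi'(x) = f(x)$. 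By Liouville's theorem on rational integration (applied over $F$), an algebraic antiderivative of $f \in F(x)$ already lies in $F(x)$; expanding $\psi \in F(x)$ in a $\mathbb{C}$-basis of $F$ and using $\psi' \in \mathbb{C}(x)$ forces $\psi = \psi_0(x) + a$ with $\psi_0 \in \mathbb{C}(x)$ and $a \in F$, so $\psi_0' = f$, contradicting our hypothesis.

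The main obstacle is the passage from a non-strongly-minimal witness over an arbitrary $F$ to an invariant algebraic object essentially defined over $\mathbb{C}$. This is a standard feature of autonomous $D$-varieties (cf.\ Rosenlicht, Hrushovski), but the bookkeeping separating $F(x)$-rational from $\mathbb{C}(x)$-rational antiderivatives requires the linear-algebraic descent above; once that is done, the rest of the argument is essentially an elementary calculation about the vector field $v$.
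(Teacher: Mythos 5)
Your easy direction is exactly the paper's. For the converse, the overall shape of your argument (reduce to $\zeta'$ algebraic over $F(\zeta)$, then contradict the existence of a nonzero residue of $f$) also parallels the paper's strategy, but there is a genuine gap at the decisive step. The witnessing field $F$ must be allowed to carry a nontrivial derivation, and then the invariance condition for the curve $P(\zeta,\zeta')=0$ is not $v(P)\equiv 0 \pmod{P}$ but the twisted condition $P^{\delta}+v(P)\equiv 0\pmod{P}$, where $P^{\delta}$ applies $\delta$ to the coefficients of $P$. Correspondingly, writing $\zeta'=\psi(\zeta)$ with $\psi$ algebraic over $F(x)$ and differentiating $\zeta'=\psi(\zeta)$ gives
$$\frac{d\psi}{dx}(x)+\frac{\psi^{\delta}(x)}{\psi(x)}=f(x),$$
not $\frac{d\psi}{dx}=f$. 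The entire content of the theorem is in controlling the extra term $\psi^{\delta}/\psi$. Your ``linear-algebraic descent'' addresses only the comparatively easy question of descending a rational antiderivative from $F(x)$ to $\mathbb{C}(x)$, and the appeal to ``a standard feature of autonomous $D$-varieties'' is not a proof that the invariant curve may be taken with constant coefficients --- that assertion is essentially equivalent to what has to be shown, and your Liouville step is applied to an equation that $\psi$ does not in general satisfy.

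The gap is repairable along your own lines: after translating so that $f$ has a nonzero residue at $0$, one checks that the coefficient of $x^{-1}$ in $\frac{d\psi}{dx}$ vanishes (an exponent $-1$ arises only from a term of exponent $0$, whose derivative kills it), and that $\psi^{\delta}/\psi$ has valuation $\geq 0$ at $0$ (since $v(\psi^{\delta})\geq v(\psi)$), hence contributes no residue either; so the left-hand side has zero residue at $0$ while $f$ does not. This is precisely the paper's proof, phrased with Puiseux series: expanding $u=\delta(y)$ in $K^{alg}\gen{\gen{y}}$, the coefficient-derivative part $\sum\delta(a_i)y^{r+i/m}$ contributes only terms of valuation $\geq 0$ to $\delta(u)/u$, the $\frac{du}{dy}$ part has vanishing coefficient at $y^{-1}$, and yet $\delta(u)/u=f(y)$ must have a nonzero coefficient at $y^{-1}$. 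As written, your argument asserts away exactly the term that carries all the difficulty.
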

In addition to giving a complete characterization for this class of equations, our proof gives a new technique for establishing strong minimality which relies on valuation theoretic arguments about the field of Puiseux series. In the strongly minimal case, we give a precise characterization of the algebraic relations between solutions (and their derivatives) of equations in our class (discussed in the third part of this introduction).

\sloppy When the equation is not strongly minimal, we show that it must be \emph{nonorthogonal to the constants}. The solution set $X$ is orthogonal to the constants if, perhaps over some differential field extension $F $ of $k$, there is a solution $a$ of $X$ such that $F \langle a \rangle $ contains a constant which is not in $F^{alg}$. Again, to non-model theorists, it likely isn't obvious that this condition should play a such a central role as it does.

With respect to the semi-minimal analysis of the generic type $p(z)$ of the equation, three possibilities are a priori possible in this case: 
\begin{enumerate} 
\item $p(z)$ is internal to the constants (this is a strengthening of nonorthogonality to the constants). 
\item $p(z)$ is 2-step analyzable in the constants. 
\item For generic $c \in \m C$, $z' = \int f(z) dz +c$ is orthogonal to the constants, and in the semi-minimal analysis of $p(z)$ there is one type nonorthogonal to the constants and one trivial type. 
\end{enumerate} 

In Section \ref{nonmin}, we show that any of the three possibilities can occur within the non-minimal equations in our family, providing concrete examples of each case. This type of analysis is done in Section \ref{nonmin} and is similar to the results of \cite{jin2020internality} (who did this analysis for a different class of order two equations). Our analysis involves work along the lines of the techniques of \cite{HrIt, notmin}, and there are a number of results of independent interest developed in the course of this analysis. 

\subsection{Special solutions and integrability} One of the fundamental problems of algebraic differential equations is to express the solutions of a differential equation or the first integral of a vector field by some specific \emph{known functions}\footnote{e.g. rational, algebraic, elementary, Liouvillian.} and arbitrary constants or to \emph{show that this is impossible}. In this manuscript, we develop the connection between various such impossibility results for solutions and the notions coming from model theory described above. In particular, we establish results for equations of Li\'enard type: 
\begin{equation*}
    x'' (t) + f(x) x' (t) + g(x) =0,
\end{equation*}
for $f(x),g(x)$ rational functions. Notice that the equations of this type generalize the Brestovski-Poizat type equations described above. This family of equations has its origins in the work of Li\'enard \cite{Lie1, Lie2} and has been the subject of study from a variety of perspectives in large part due to its important applications in numerous scientific areas. See \cite{harko2014class} and the references therein for numerous applications. The class of equations has been intensely studied with respect to finding explicit solutions and integrability, mainly from the point of view of Liouvillian functions. We give a review of the existing results in Section \ref{prevlie}. The connections between these model theoretic notions and the equation having certain special solutions are known to some experts, but there does not seem to be any account of these connections in the literature. Our approach makes use of model theoretic notions and, in particular, a recent specialization theorem of the second author \cite{jaoui2020corps}.

\subsection{Algebraic relations between solutions} Though establishing the strong minimality of a differential equation is itself sometimes a a motivational goal, in many cases it is just the first step in a strategy to classify the algebraic relations between solutions of the equation. See for instance \cite{jaoui2019generic}, where this strategy is employed for generic planar vector fields.  In \cite{casale2020ax}, this strategy is used to prove the Ax-Lindemann-Weierstrass theorem for the automorphic functions associated with Fuchsian groups. Sections \ref{formssection} and \ref{algrelsection} are devoted to classifying the algebraic relations between the strongly minimal equations of Brestovski-Poizat type. 

\begin{thmstar}
Let $f_1(z),\ldots, f_n(z) \in \mathbb{C}(z)$ be rational functions such that each $f_i(z)$ is not the derivative of a rational function in $\mathbb{C}(z)$ and consider for $i = 1,\ldots, n$, $y_i$ 	a solution of 
$$(E_i): y''/y' = f_i(y)$$
Then $trdeg_\mathbb{C}(y_1,y'_1,\ldots, y'_n,y_n) = 2n$ unless for some $i \neq j$ and some $(a,b) \in \mathbb{C}^\ast \times \mathbb{C}$, $y_i = ay_j + b$. In that case,  we also have $f_i(z) = f_j(az + b)$.
\end{thmstar}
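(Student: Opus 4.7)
The plan is to reduce to the case of two equations via subadditivity of transcendence degree, promote the resulting algebraic correspondence to a rational map in a single variable, and then finish by a direct differential computation. For the reduction: by the strong minimality theorem stated earlier, each set $X_i := \{z : z'' = z' f_i(z),\ z' \neq 0\}$ is strongly minimal over $\mathbb{C}$, so every generic solution satisfies $\text{trdeg}_\mathbb{C}(y_i, y_i') = 2$. If the total transcendence degree is strictly less than $2n$, subadditivity produces indices $i \neq j$ with $\text{trdeg}_{\mathbb{C}(y_j, y_j')}(y_i, y_i') < 2$, and strong minimality of $X_i$ over $\mathbb{C}\langle y_j\rangle$ forces $y_i \in \overline{\mathbb{C}(y_j, y_j')}^{\mathrm{alg}}$. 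It therefore suffices to treat $n=2$ and to classify the algebraic correspondences between non-orthogonal strongly minimal sets of Poizat type.

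The second step is to argue that this algebraic relation in fact takes the form $y_i = \phi(y_j)$ for some $\phi \in \mathbb{C}(z)$. Starting from an irreducible polynomial $P(T, Y, W) \in \mathbb{C}[T, Y, W]$ with $P(y_i, y_j, y_j') = 0$, one differentiates and substitutes $y_i'' = y_i' f_i(y_i)$ and $y_j'' = y_j' f_j(y_j)$ to obtain further polynomial identities. The key subtlety is that $y_j'$ need not a priori be transcendental over $\mathbb{C}(y_i, y_j)$, so one cannot simply match coefficients in $y_j'$ at this stage. Ruling out this ``transverse'' possibility---and more generally promoting the algebraic correspondence to a rational map depending only on $y_j$---is the main obstacle: it requires exploiting the $1$-form structure of Poizat equations, using the non-exactness of $f_k(z)\,dz$ (the hypothesis underlying strong minimality) together with the analysis of Section~\ref{formssection}, in order to exclude the nontrivial finite covers and transverse correspondences that bare strong minimality would otherwise permit.

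Once $y_i = \phi(y_j)$ with $\phi \in \mathbb{C}(z)$ is established, the remaining computation is direct. Differentiating gives $y_i' = \phi'(y_j)\,y_j'$ and
\begin{equation*}
y_i'' \;=\; \phi''(y_j)(y_j')^2 + \phi'(y_j)\,y_j'' \;=\; \phi''(y_j)(y_j')^2 + \phi'(y_j)\,y_j'\,f_j(y_j).
\end{equation*}
Substituting into $y_i'' = y_i'\,f_i(y_i) = \phi'(y_j)\,y_j'\,f_i(\phi(y_j))$ yields
\begin{equation*}
\phi''(y_j)(y_j')^2 \;=\; \phi'(y_j)\,y_j'\,\bigl[f_i(\phi(y_j)) - f_j(y_j)\bigr].
\end{equation*}
Since $y_j'$ is transcendental over $\mathbb{C}(y_j)$ by strong minimality of $X_j$, comparing coefficients of $(y_j')^2$ and of $y_j'$ forces $\phi'' \equiv 0$ and $f_i \circ \phi = f_j$. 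Hence $\phi(z) = az + b$ with $a \in \mathbb{C}^\ast$ and $b \in \mathbb{C}$ (with $a \neq 0$ to ensure $y_i' \not\equiv 0$), which yields both $y_i = ay_j + b$ and the claimed relation between $f_i$ and $f_j$.
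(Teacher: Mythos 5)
Your overall architecture matches the paper's: reduce to a dependent pair $(i,j)$, use the volume-form machinery of Section~\ref{formssection} to strip the derivatives out of the algebraic relation, then finish with a direct differential computation (and your final computation, granted its hypothesis, is correct). But two steps do not go through as written. The smaller one is the reduction to pairs: subadditivity of transcendence degree only produces one index $i$ with $(y_i,y_i')$ dependent on \emph{all} the remaining solutions together; extracting a dependent \emph{pair} requires geometric triviality of the equations (Corollary~\ref{triviality}, via Fact~\ref{autonomous}(2)), not strong minimality alone. This is easily repaired, but it is not subadditivity.

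The more serious issue is your intermediate claim that the relation can be promoted to $y_i = \phi(y_j)$ with $\phi \in \mathbb{C}(z)$. The machinery you invoke --- the non-exactness of $f_k(z)\,dz$ and the analysis of Section~\ref{formssection} --- delivers only Proposition~\ref{StrongTriviality}, i.e.\ $\mathbb{C}(y_i)^{\mathrm{alg}} = \mathbb{C}(y_j)^{\mathrm{alg}}$: a polynomial relation $p(y_i,y_j)=0$, which is exactly a ``nontrivial finite cover.'' That analysis excludes the transverse correspondences (those genuinely involving $y_j'$) but does \emph{not} exclude finite covers, so your closing computation runs under an unproved hypothesis. The paper closes this gap by performing essentially your computation directly on the interalgebraic relation: implicit differentiation of $p(y_i,y_j)=0$ gives $y_j' = \alpha\, y_i'$ with $\alpha \in \mathbb{C}(y_i)^{\mathrm{alg}}$, and writing $\alpha' = \beta\, y_i'$ one obtains $\beta\, y_i' = \alpha\bigl(f_j(y_j) - f_i(y_i)\bigr)$; if $\beta \neq 0$ this puts $y_i'$ in $\mathbb{C}(y_i)^{\mathrm{alg}}$, contradicting strong minimality, so $\alpha$ is a nonzero constant, the relation is affine, and $f_i(y_i)=f_j(y_j)$ follows. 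In other words, the finite covers are ruled out at the end by the differential computation itself, not beforehand by the forms argument; your proof is missing that step.
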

Much of the analysis of Section \ref{formssection} is of independent interest. Indeed, in Section \ref{5.1} we set up the formalism of volume forms, vector fields, and Lie derivatives quite generally. In Section \ref{5.2} we give a proof of a result of Hrushovski and Itai \cite{HrIt} using our formalism. In Section \ref{5.3}, we develop and use formalism around the Lie algebra of volume forms to show that for equations in our class, characterizing algebraic relations between solutions and their derivatives follows from characterizing \emph{polynomial relations} between solutions (with no derivatives). Following this, in Section \ref{algrelsection}, we give a precise characterization of the polynomial relations which can appear. In Section \ref{nonmin} we turn towards the nonminimal case and characterize the type of semi-minimal analysis which can appear for the equations from the class and make some remarks regarding the implications of this analysis on the dimension order property (DOP). 

\subsection{Organization of the paper} 
Section \ref{prelim} contains the basic definitions and notions from model theory and the model theory of differential fields that we use throughout the paper. The basic setup of other topics is mostly carried out in the respective sections throughout the paper. In Section \ref{strminsection} we characterize strong minimality for equations of a generalized Brestovski-Poizat form. In Section \ref{integrabilityandnot}, we give a brief introduction to integrability and various special classes of solutions, overview the extensive previous work for equations of Li\'enard type, and prove our results on the existence of Liouvillian solutions to Li\'enard equations. In Sections \ref{formssection} and \ref{algrelsection} with classify the algebraic relations between solutions of strongly minimal equations in the generalized Brestovski-Poizat class. In Section \ref{nonmin} we analyze the nonminimal equations of the class. 

\section{Preliminaries} \label{prelim}

Throughout, $(\mathcal{U},\delta)$ will denote a saturated model of $DCF_0$, the theory of differentially closed fields of characteristic zero with a single derivation. So $\mathcal{U}$ will act as a ``universal'' differential field in the sense of Kolchin. We will also assume that its field of constants is $\m C$. We will be using standard notations: given a differential field $K$, we denote by $K^{alg}$ its algebraic closure and if $y$ is a tuple from $\mathcal{U}$, we use $K\gen{y}$ to denote the differential field generated by $y$ over $K$, i.e. $K\gen{y}=K(y,\delta (y),\delta^2(y),\ldots)$. We will sometimes write $y'$ for $\delta (y)$ and similarly $y^{(n)}$ for $\delta^n (y)$.

Recall that a Kolchin closed subset of $\mathcal{U}^n$ is the vanishing set of a finite system of differential polynomials equations and by a definable set we mean a finite Boolean combination of Kolchin closed sets. In the language $L_{\delta}=(+,-,\times,0,1,\delta)$ of differential rings, these are precisely the sets defined by quantifier free $L_{\delta}$-formulas. Since $DCF_0$ has quantifier elimination, these are exactly {\em all} the definable sets. If a definable set $X$ in $\mathcal{U}^n$ is defined with parameters from a differential field $K$, then we will say that $X$ is defined over $K$. Given such an $X$, we define the {\em order} of $X$ to be $ord(X)=sup\{\text{tr.deg.}_FF\langle y \rangle:y\in X\}$ where $F$ is any differential field over which $X$ is defined. We call an element $y\in X$ {\em generic over $K$} if $\text{tr.deg.}_KK\langle y\rangle = ord(X)$.

As mentioned in the introduction, strong minimality is the first central notion that is studied in this paper:
\begin{defn}
A definable set $X$ is said to be {\em strongly minimal} if it is infinite and for every definable subset $Y$ of $X$, either $Y$ or $X\setminus Y$ is finite.
\end{defn}
It is not hard to see that  $\mathbb{C}$, the field of constants, is strongly minimal.
\begin{rem}\label{SMLienard}We will be mainly concerned with equations of Li\'enard type and in that case, we have a nice algebraic characterization of strong minimality: 
Let $\mathcal{C}\subset\mathbb{C}$ be a finitely generated subfield. Let $X$ be defined by an ODE of the form $y^{(n)}=f(y,y',\ldots,y^{(n-1)})$, where $f$ is rational over $\mathcal{C}$. Then $X$ (or the equation) is strongly minimal if and only if for any differential field extension $K$ of $\mathcal{C}$ and solution $y\in X$, we have that $\text{tr.deg.}_KK\gen{y}=0$ or $n$. 

If $X$ is given as a vector field on the affine plane, then if $X$ is strongly minimal there are no invariant algebraic curves of the vector field (if there were, the generic solution of the system of equations given by $X$ and the curve would violate the transcendence condition we describe in the previous paragraph). For instance, the equation $z''=z \cdot z' $ studied by Poizat \cite{poizat1977rangs} is not strongly minimal, but the definable set $z''=z \cdot z' , \, z'\neq 0$ is strongly minimal. So, strong minimality precludes the existence of invariant curves, \emph{but this is not sufficient.} For instance, the system 
\begin{eqnarray*} 
x' & = &  1 \\ 
y' & = & xy + \alpha
\end{eqnarray*}
is not strongly minimal, but when $\alpha \neq 0$ the system has no invariant curves.\footnote{Thanks to Maria Demina for this example.} It is easy to see that the system violates the transcendence criterion over the field $\m C (t)$ with the solution $x=t$ and $y$ a generic solution to $y'=ty+ \alpha$. 
\end{rem}

As already alluded to in the introduction (see also the discussion below), in $DCF_0$ strongly minimal sets determine, in a precise manner, the structure of all definable sets of finite order. Furthermore, establishing strong minimality of a definable set $X$ usually ensures that we have some control over the possible complexity of the structure of the set $X$. As an example, if $X$ is defined over $\mathbb{C}$, that is the differential equations involved are autonomous, then the following holds (cf. \cite[Section 2]{nagloo2011algebraic} and \cite[Section 5]{casale2020ax}).

\begin{fact}\label{autonomous}
Assume that a strongly minimal set $X$ is defined over $\mathbb{C}$ and that $ord(X)>1$. Then
\begin{enumerate}
    \item $X$ is orthogonal to $\mathbb{C}$.
    \item $X$ is geometrically trivial: for any differential field $K$ over which $X$ is defined, and for any $y_{1},..,y_{\ell}\in X$, denoting $\tilde{y}_i$ the tuple given by $y_i$ together with all its derivatives, if $(\tilde{y}_1,\ldots,\tilde{y}_{\ell})$ is algebraically dependent over $K$, then for some $i<j$, $\tilde{y}_{i}, \tilde{y}_{j}$ are algebraically dependent over $K$.
    \item If $Y$ is another strongly minimal set that is nonorthogonal to $X$, then it is non-weakly orthogonal to $X$.
\end{enumerate}
\end{fact}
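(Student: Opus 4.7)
The plan is to deduce all three assertions from the Hrushovski-Sokolovic trichotomy theorem for strongly minimal sets in $DCF_0$, combined with the classification of finite Morley rank definable groups in differentially closed fields; parts (2) and (3) will use these deep results as black boxes. For (1), I would argue directly: suppose $X$ is non-orthogonal to $\mathbb{C}$, so over some differential field extension $K \supseteq \mathbb{C}$ there is a generic $a \in X$ and a generic constant $c \in K\langle a\rangle$ with $c \notin K^{\mathrm{alg}}$. Then $c \in \acl(Ka)$, and since both $X$ and $\mathbb{C}$ are strongly minimal of $U$-rank one, symmetry of forking gives $a \in \acl(Kc) = K(c)^{\mathrm{alg}}$. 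Differentiating a minimal polynomial of $a$ over $K(c)$ and using $\delta c = 0$ shows inductively that each $\delta^n a$ lies in $K(c)^{\mathrm{alg}}$, so $K\langle a\rangle \subseteq K(c)^{\mathrm{alg}}$ has transcendence degree at most one over $K$, contradicting $ord(X) > 1$.

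For (2), by (1), $X$ is orthogonal to $\mathbb{C}$, hence locally modular by trichotomy. If $X$ were not trivial, the Hrushovski group configuration theorem would produce a connected strongly minimal definable group $G$ non-orthogonal to $X$, defined over $\mathbb{C}$ (using that the field of constants is algebraically closed). By the classification of finite Morley rank groups in $DCF_0$, $G$ would be definably isogenous to a differential algebraic subgroup of a commutative algebraic $\mathbb{D}$-group over $\mathbb{C}$. A case analysis on one-dimensional algebraic groups over $\mathbb{C}$ yields a contradiction: $\mathbb{G}_a$ and $\mathbb{G}_m$ give groups non-orthogonal to $\mathbb{C}$, contradicting (1); while the Manin kernel of an elliptic curve defined over $\mathbb{C}$ is torsion, hence not strongly minimal. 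So $X$ is trivial, and the $\ell$-tuple formulation in the statement is the standard reformulation of triviality of the induced pregeometry on the generic type of $X$.

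For (3), let $Y$ be strongly minimal and non-orthogonal to $X$. Non-orthogonality produces, over some base extension $K' \supseteq K$, a generically finite-to-finite correspondence $Z \subseteq X \times Y$ witnessing the dependence between generic realizations. To upgrade to non-weak-orthogonality, I would use that trivial strongly minimal types are rigid: an infinite definable family of such correspondences would, via the group configuration, yield an infinite definable group acting on (a finite power of) $X$, contradicting the triviality obtained in (2). Hence $Z$ has its canonical parameter in $\acl(K)$, so the correspondence is already defined over $K$ up to a finite choice, which gives the claimed non-weak-orthogonality. The principal obstacle throughout is the reliance on the trichotomy theorem and the classification of definable groups in $DCF_0$; given these as black boxes, what remains is routine model-theoretic bookkeeping together with the case analysis over one-dimensional algebraic groups over the constants.
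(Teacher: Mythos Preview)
The paper does not supply its own proof of this statement; it is recorded as a Fact with pointers to Section~2 of Nagloo--Pillay and Section~5 of Casale--Freitag--Nagloo, and later the paper refers explicitly to Proposition~5.8 of the latter for part~(2). Your outline is essentially the standard argument found in those references --- (1) by a direct transcendence-degree count, (2) via the trichotomy together with the classification of strongly minimal groups in $DCF_0$, and (3) via rigidity of trivial types --- so in that sense you are aligned with the intended proof.

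There is, however, a factual slip in your case analysis for (2). You write that ``the Manin kernel of an elliptic curve defined over $\mathbb{C}$ is torsion, hence not strongly minimal.'' This is incorrect: for an elliptic curve $E$ defined over the constants (which then carries a unique, namely trivial, $D$-group structure, since $\mathrm{Hom}(E,\mathbb{G}_a)=0$), one has $E^\sharp = E(\mathbb{C})$, the full group of constant points, not merely the torsion subgroup. This $E(\mathbb{C})$ \emph{is} strongly minimal --- it is a one-dimensional algebraic group over the algebraically closed field $\mathbb{C}$ --- so the contradiction you assert does not occur. The correct way to close the case is to note that $E(\mathbb{C})$, like $\mathbb{G}_a(\mathbb{C})$ and $\mathbb{G}_m(\mathbb{C})$, is internal to $\mathbb{C}$; hence $G$, and therefore $X$, would be non-orthogonal to $\mathbb{C}$, contradicting (1). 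More conceptually: once the group configuration has produced a strongly minimal group $G$ over $\acl(\mathbb{C})=\mathbb{C}$, the point is simply that every strongly minimal group definable over the constants in $DCF_0$ is internal to the constants. With this correction your argument for (2) goes through.

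Your sketch for (3) is in the right spirit but compressed: the step from ``an infinite family of correspondences'' to an application of the group configuration needs to be made precise (which family, and why its canonical base genuinely moves). The standard formulation, which is what the cited references use, is that a geometrically trivial minimal type has trivial binding group relative to any non-orthogonal minimal type, so the correspondence witnessing non-orthogonality is already defined over $K^{\mathrm{alg}}$.
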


Recall that if $X_1$ and $X_2$ are strongly minimal sets, we say that $X_1$ and $X_2$ are \emph{nonorthogonal} if there is some infinite definable relation $R\subset X_1\times X_2$ such that ${\pi_1}_{|R}$ and ${\pi_2}_{|R}$ are finite-to-one functions. Here for $i=1,2$, we use $\pi_i:X_1\times X_2\rightarrow X_i$ to denote the projections maps. Generally, even if the sets $X_1$ and $X_2$ are defined over some differential field $K$, it need not be the case that the finite-to-finite relation $R$ witnessing nonorthogonality is defined over $K$ (instead it will be defined over a differential field extension of $K$). We say that $X_1$ is non-\emph{weakly orthogonal} to $X_2$ if they are nonorthogonal and the relation $R\subset X_1\times X_2$ is defined over $K^{alg}$. 

\begin{rem}
Notice that in Fact \ref{autonomous}(2) we can replace ``$K$'' in the conclusion by ``$\mathbb{C}$'', that is one can state the conclusion as ``then for some $i<j$, $\tilde{y}_{i}, \tilde{y}_{j}$ are algebraically dependent over $\mathbb C$''. This follows using the non-weak orthogonality statement given in Fact \ref{autonomous}(3) (taking $Y=X$).
\end{rem}
In the next section, we will show that strong minimality holds in some special cases of equations of Li\'enard type. Since these equations are autonomous of order 2, it will then follows that all three conclusions of Fact \ref{autonomous} hold in those cases. This will allow us to make deeper analysis of the algebraic property of the solution sets.

It is worth mentioning that if a strongly minimal set is not necessarily defined over $\m C$, then there still is a strong classification result called the Zilber trichotomy theorem:

\begin{fact}[\cite{HrushovskiSokolovic},\cite{PillayZiegler}]\label{trichotomy} Let $X$ be a strongly minimal set. Then exactly one of the following holds:
\begin{enumerate}
\item $X$ is nonorthogonal to $\mathbb{C}$,
\item $X$ is nonorthogonal to the (unique) smallest Zariski-dense definable subgroup of a simple abelian variety $A$ which does not descend to $\mathbb{C}$, 
\item $X$ is geometrically trivial.
\end{enumerate}
\end{fact}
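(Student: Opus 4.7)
The plan is to deduce Fact \ref{trichotomy} from the general Zilber trichotomy theorem for Zariski geometries, together with a concrete identification of the ``field'' and ``group'' cases inside $DCF_0$. First I would show that every strongly minimal type in $DCF_0$, once suitable parameters are named and possibly after passing to a finite cover, carries the structure of a Zariski geometry in the sense of Hrushovski--Zilber. The ``closed sets'' come from the ambient differential-algebraic geometry: one uses the Kolchin topology on prolongation spaces, together with the fact that in $DCF_0$ a strongly minimal set has a well-behaved dimension theory, to verify the dimension and smoothness axioms (irreducibility, properness of projections, fiber-dimension bounds). This is where the specifics of $DCF_0$ enter: differential jet spaces provide the geometric substrate that one then feeds into the abstract machine.

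Next I would apply the abstract Hrushovski--Zilber trichotomy for (very) ample Zariski geometries, which says that any such structure is either (i) non-locally-modular, in which case it interprets an algebraically closed field, (ii) locally modular and non-trivial, in which case it interprets an infinite abelian group, or (iii) geometrically trivial. These three cases are pairwise exclusive since $(\m C,+,\cdot)$ is not locally modular while strongly minimal groups are. It then remains to identify the interpreted field and group concretely inside $\mc U$.

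For case (i), the heart of the matter is to prove that the only strongly minimal pure field interpretable in $DCF_0$ is $(\m C,+,\cdot)$. Given an interpretable field $F$, one observes that its multiplication and addition are definable in $\mc U$, hence compatible with $\delta$; an analysis of the additive and multiplicative groups (using the fact that $\mc U$ is a field of characteristic zero whose only proper definable subfield of finite Morley rank is $\m C$) forces $F \cong \m C$ as a pure field, so $X$ is nonorthogonal to $\m C$. For case (ii) one shows that the interpretable abelian group $G$ must be definably isomorphic to a strongly minimal subgroup of a commutative algebraic $\delta$-group in the sense of Buium. The Buium--Pillay classification of such $\delta$-groups, combined with the Manin kernel construction and a reduction to the simple case, then identifies $G$ (up to nonorthogonality) with the unique smallest Zariski-dense $\delta$-closed subgroup of a simple abelian variety $A$, and the requirement that $G$ not already fall into case (i) forces $A$ not to descend to $\m C$.

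The main obstacle is twofold: verifying the Zariski geometry axioms for arbitrary strongly minimal sets in $DCF_0$ (which is the deep technical content of Hrushovski--Sokolovi\'c and was recast via the Pillay--Ziegler jet space methods), and the concrete recognition theorems identifying interpretable fields with $\m C$ and locally modular interpretable groups with Manin kernels of simple abelian varieties. The trichotomy is mutually exclusive because nonorthogonality to $\m C$ would make $X$ non-locally-modular, while nonorthogonality to a Manin kernel of a simple abelian variety not descending to $\m C$ makes $X$ locally modular but not trivial; both of these exclude geometric triviality.
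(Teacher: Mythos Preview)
The paper does not give a proof of this statement: it is recorded as a \emph{Fact} with citations to Hrushovski--Sokolovi\'c and Pillay--Ziegler, and is used as a black box. So there is no ``paper's own proof'' to compare against; your write-up is a sketch of the argument in those references rather than an alternative to anything done here.

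As a sketch of the literature proof, your outline is broadly on target (Zariski geometry/jet-space input, then identification of the field and group cases), but two points deserve correction. First, your justification for mutual exclusivity is wrong as stated: it is not true that ``strongly minimal groups are locally modular'' --- $(\m C,+)$ is a counterexample. The exclusivity of the three cases is rather built into the abstract trichotomy (trivial / locally modular non-trivial / non-locally-modular), and the point in $DCF_0$ is that Manin kernels of simple abelian varieties \emph{not descending to $\m C$} are locally modular, while anything nonorthogonal to $\m C$ is not. Second, you blend the two cited approaches: Hrushovski--Sokolovi\'c proceeds via Zariski geometries, whereas Pillay--Ziegler bypasses that machinery and instead uses differential jet spaces to establish the canonical base property directly, from which the trichotomy follows. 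These are genuinely different routes, not one argument ``recast''.
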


Notice that nonorthogonality to the constants is simply a version of algebraic integrability after base change. We will now discuss several other variations of this notion but first need to say a few words about ``types'' and ``forking'' in $DCF_0$.

Let $K$ be a differential field and ${y}$ a tuple of elements from $\mathcal{U}$, the type of ${y}$ over $K$, denoted $\tp({y}/K)$, is the set of all $L_{\delta}$-formulas with parameters from $K$ that ${y}$ satisfies. It is not hard to see that the set $I_{p}=\{f\in K\{\overline{X}\}: f(\overline{X})=0\in p\}=\{f\in K\{\overline{X}\}: f({y})=0\}$ is a differential prime ideal in $K\{\overline{X}\}=K[\overline{X},\overline{X}',\ldots]$, where $p=\tp({y}/K)$. Indeed, by quantifier elimination, the map $p\mapsto I_p$ is a bijection between the set of complete types over $K$ and differential prime ideals in $K\{\overline{X}\}$. Therefore in what follows there is no harm to think of $p=tp({y}/K)$ as the ideal $I_{p}$. If $X$ is a definable set over $K$, then by the (generic) type of $X$ over $K$ we simply mean $\tp(y/K)$ for $y\in X$ generic over $K$. We say that a complete type\footnote{So $p=tp(y/K)$ for some tuple $y$ from $\mathcal{U}$.} $p$ over a differential field $K$ is of finite rank (or order) if it is the generic type of some definable set over $K$ of finite order.

\begin{defn}
Let $K$ be a differential field and ${y}$ a tuple of elements from $\mathcal{U}$. Let $F$ be a differential field extension of $K$. We say that $\tp(y/F)$ {\em is a nonforking extension} of $\tp(y/K)$ if $K\gen{y}$ is algebraically disjoint from $F$ over $K$, i.e., if $y_1,\ldots,y_k\in K\gen{y}$ are algebraically independent over $K$ then they are algebraically independent over $F$. Otherwise, we say that $\tp(y/F)$ is a forking extension of $\tp(y/K)$ or that $\tp(y/F)$ forks over $K$.
\end{defn}
It is not hard to see from the definition that $\tp(y/K^{alg})$ is always a nonforking extension of $\tp(y/K)$. A complete type $p=\tp(y/K)$ over a differential field $K$, is said to be {\em stationary} if  $\tp(y/K^{alg})$ is its unique nonforking extension, i.e., whenever $z$ is another realization of $p$ (so $\tp(y/K)=\tp(z/K)$), then $z$ is also a realization of $\tp(y/K^{alg})$ (so $\tp(y/K^{alg})=\tp(z/K^{alg})$). We say that it is {\em minimal} if it is not algebraic and all its forking extensions are algebraic, that is if $q=tp(y/F)$ is a forking extension of $p$, where $F\supseteq K$, then $y\in F^{alg}$. If $X$ is strongly minimal and $p$ is its generic type, then if follows that $p$ is minimal. 

Using forking, one obtain a well-defined notion of independence as follows: Let $K\subseteq F$ be differential fields and ${y}$ a tuple of elements from $\mathcal{U}$. We say that $y$ is {\em independent} from $F$ over $K$ and write $y\ind_{K}{F}$, if $\tp(y/F)$ is a nonforking extension of $\tp(y/K)$. We now give the first variation of nonorthogonality to the constants.

\begin{defn}
A complete type $p$ over a differential field $K$ is said to be {\em internal to $\m C$} if there is some differential field extension $F\supseteq K$ such that for every realisation $y$ of $p$ there is a tuple $c_1,\ldots,c_k$ from $\m C$ such that $y\in F(c_1,\ldots,c_k)$.
\end{defn}

\begin{fact}\cite[Lemma 10.1.3-4]{tent2012course}\label{internality}
\begin{enumerate}
\item A complete type $p$ over a differential field $K$ is internal to $\m C$ if and only if there is some differential field extension $F\supseteq K$ and some realisation $y$ of $p$ such that $y\in F(\m C)$ and $y\ind_{K}{F}$.
\item A definable set $X$ is internal to $\m C$ if and only if there is a definable surjection from $\m C^n$ (for some $n\in\mathbb{N}$) onto $X$.
\end{enumerate}
\end{fact}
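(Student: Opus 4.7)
The plan is to handle the two parts separately, with both arguments using standard facts about nonforking and saturation of the universal domain $\mathcal{U}$.

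For the forward direction of (1), assume $p$ is internal to $\m C$, witnessed by $F \supseteq K$. I would invoke the existence of nonforking extensions: the type $p$ has a nonforking extension $q$ to $F$, realized in $\mathcal{U}$ by saturation. Any realization $y$ of $q$ is automatically a realization of $p$ satisfying $y \ind_K F$, and internality (applied to this particular realization) guarantees that $y \in F(c_1, \ldots, c_k)$ for some constants $c_i \in \m C$. This gives the desired witness.

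For the reverse direction of (1), suppose $y$ realizes $p$ with $y \in F(\m C)$ and $y \ind_K F$. Given any other realization $y'$ of $p$, homogeneity of $\mathcal{U}$ yields an automorphism $\sigma \in \operatorname{Aut}(\mathcal{U}/K)$ sending $y$ to $y'$. Because differential automorphisms fix $\m C$ setwise, we obtain $y' \in \sigma(F)(\m C)$. The remaining task is to produce a single field witness working for every realization, and this is the subtle point. I would pass to a sufficiently saturated submodel $F' \subseteq \mathcal{U}$ containing $F$ (of cardinality above the one we care about). Then for any realization $y'$ of $p$, by saturation of $F'$ over $K$ we can find a copy $F'' \cong_K F$ with $F'' \subseteq F'$ and an automorphism of $\mathcal{U}$ fixing $K$ taking $(F, y)$ to $(F'', y')$. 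Invariance of the defining formulas for ``$y \in F(\m C)$'' then yields $y' \in F''(\m C) \subseteq F'(\m C)$, establishing internality with uniform witness $F'$.

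For (2), the backward direction is immediate: if $f: \m C^n \to X$ is definable over $F$, then every element of $X$ lies in $F(\m C)$, so $X$ is internal. For the forward direction, assume $X$ is internal witnessed by $F$. Consider the partial type
\[
\Sigma(x) \;=\; \{ x \in X \} \cup \bigl\{ \neg\, \exists \bar c \in \m C^{n_\phi}\, \bigl( x = \phi(\bar c, \bar b_\phi) \bigr) \;:\; \phi \text{ an } L_\delta\text{-formula},\ \bar b_\phi \in F \bigr\}.
\]
Internality says $\Sigma(x)$ is not realized in $\mathcal{U}$, so by compactness already finitely many $F$-definable partial functions $f_1, \ldots, f_r$ with domains $\m C^{n_i}$ have images covering $X$. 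A packaging step using a selector coordinate and padding to a common arity combines these into a single $F$-definable surjection $f: \m C^n \to X$.

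The main obstacle I expect is the uniformity step in the reverse direction of (1) — upgrading a single witness realization to a uniform field $F'$ working for every realization — which is precisely where saturation of $\mathcal{U}$ and invariance of nonforking under $\operatorname{Aut}(\mathcal{U}/K)$ do the essential work. The compactness packaging in (2) is routine once one accepts that internality is formula-by-formula rather than uniform a priori.
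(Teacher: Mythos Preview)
The paper does not give its own proof of this fact; it is cited directly from Tent--Ziegler without argument, so there is no paper proof to compare against.

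Your arguments for the forward direction of (1) and for both directions of (2) are correct and follow the standard line. The gap is in the reverse direction of (1). You want, for each realization $y'$ of $p$, a copy $F'' \subseteq F'$ with $(F,y) \equiv_K (F'',y')$. But saturation of $F'$ only lets you realize types whose parameters lie \emph{inside} $F'$; the type you need to realize is over $K \cup \{y'\}$, and in the only interesting case $y' \notin F'$ (if $y' \in F'$ then $y' \in F'(\m C)$ trivially) this parameter set is not contained in $F'$. So saturation alone does not produce the required $F''$, and your sentence ``by saturation of $F'$ over $K$ we can find a copy $F'' \cong_K F$ with $F'' \subseteq F'$ and an automorphism of $\mathcal{U}$ fixing $K$ taking $(F,y)$ to $(F'',y')$'' is exactly where the argument breaks.

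The standard repair --- and this is what Tent--Ziegler actually do --- is to take a sufficiently long Morley sequence $(F_i)_{i<\lambda}$ in $\tp(F/K)$ and set $F' = \bigcup_i K\langle F_i\rangle$. Local character of forking guarantees that any given $y'$ is independent from some $F_i$ over $K$; then \emph{stationarity} of $p$ forces $(F_i,y') \equiv_K (F,y)$, which is the step your argument was missing. Stationarity is a genuine hypothesis in Tent--Ziegler's lemma; the paper's phrasing suppresses it, but in $DCF_0$ one recovers the non-stationary case by first passing to $K^{alg}$ and treating the finitely many extensions of $p$ there.
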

Using Fact \ref{internality}(2) it is not hard to see that homogeneous linear differential equations are internal to $\m C$. Indeed in this case, the solution set is simply a $\m C$-vector space $V$. If $(v_1,\ldots v_k)$ is a basis for $V$, then the map $f(x_1,\ldots x_k)=\sum_{i=1}^kx_iv_i$ is the surjective map $\m C^n\rightarrow V$ witnessing that $V$ is internal to $\m C$. Clearly, Fact \ref{internality}(2) also shows that internality to the constants is closely related to the notion of algebraic integrability (i.e. enough independent first integrals). We also have a more general but closely related notion of analysability in the constants:
\begin{defn}
\sloppy Let $y$ be a tuple from $\mathcal{U}$ and $K$ a differential field. We say that $\tp(y/K)$ is {\em analysable in the constants} if there is a sequence  $(y_{0},\ldots,y_{n})$ such that 
\begin{itemize}
    \item $y\in K\gen{y_{0},y_{1},\ldots,y_{n}}^{alg}$ and 
    \item for each $i$, either $y_{i}\in K\gen{y_{0},\ldots,y_{i-1}}^{alg}$ or $tp(y_{i}/K\gen{y_{0},\ldots,y_{i-1}})$ is stationary and internal to $\m C$.
\end{itemize}
\end{defn}
\sloppy It follows that if $\tp(y/K)$ is analysable in the constants, then the sequence $(y_{0},\ldots,y_{n})$ in the definition above can be chosen to be from $K\gen{y}$. Furthermore, it follows that analysability of $p$ in the constants is equivalent to the condition that every extension of $p$ is nonorthogonal to $\m C$. Differential equations that have Liouvillian solutions provide the most studied example of equations that are analysable in the constants. We will say quite a bit more in Section \ref{integrabilityandnot}.
Let us now turn our attention to the semi-minimal analysis of complete types, a notion which has been mentioned a few times in the introduction. 
\begin{defn} \label{semiminimal}
Let $p$ be a complete stationary type over  a differential field $K$. Then $p$ is said to be {\em semiminimal} if there is some differential field extension $F\supseteq K$ , some $z$ realising the nonforking extension of $p$ to $F$ and $z_1,\ldots,z_n$ each of whose type over $F$ is minimal and such that $z\in F\gen{z_1,\ldots,z_n}$.
\end{defn}
Semiminimal (and hence minimal) types are the building block all finite rank types in $DCF_0$ via the following construction

\begin{defn} \label{semimin}
Let $p=\tp(y/K)$ be a complete type over a differential field $K$. A \emph{semiminimal analysis of $p$} is a sequence $(y_0, \ldots,y_n)$ such that 
\begin{itemize}
\item $y\in K\gen{y_n}$, 
\item for each $i$, $y_i \in K\gen{y_{i+1}}$,
\item for each $i$, $\tp(y_{i+1} /K\gen{y_i})$ is semiminimal. 
\end{itemize} 
\end{defn}
The following is a fundamental result and is obtained by putting together Lemma 2.5.1 in \cite{GST} and Lemma 1.8 in \cite{BUECHLER2008135} (See aslo Proposition 5.9 and 5.12 in \cite{PillayNotes}).
\begin{fact}\label{semifact}
Every complete type of finite rank in $DCF_0$ has a semiminimal analysis.
\end{fact}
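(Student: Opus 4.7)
The strategy is induction on the $U$-rank of $p$, which in $DCF_0$ coincides with the Morley rank for types of finite rank because $DCF_0$ is totally transcendental. The base case $U(p)=0$ is immediate: $y\in K^{\mathrm{alg}}$ and the length-one sequence $(y_0)=(y)$ is a valid semiminimal analysis, with no semiminimality condition to verify (alternatively, algebraic types are vacuously semiminimal in Definition~\ref{semiminimal}).

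For the inductive step with $U(p)=k>0$, the key is to produce a tuple $b\in K\langle y\rangle$ with $0<U(b/K)<k$ such that $\tp(y/K\langle b\rangle)$ is semiminimal. Once such $b$ is found, the inductive hypothesis applied to $\tp(b/K)$ (which has strictly smaller $U$-rank, by the Lascar inequalities) yields a semiminimal analysis $(y_0,\dots,y_{m-1})$ of $b$ with $y_{m-1}=b$. One then checks that $(y_0,\dots,y_{m-1},y)$ is a semiminimal analysis of $p$: the chain inclusion $y_{m-1}=b\in K\langle y\rangle$ holds by construction, and the top step $\tp(y/K\langle y_{m-1}\rangle)=\tp(y/K\langle b\rangle)$ is semiminimal by hypothesis. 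The lower steps are the semiminimal analysis of $\tp(b/K)$.

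To produce $b$, one exploits totally transcendentality: inside the formulas of $p$ one may descend by Morley-rank stratification to a strongly minimal formula $\varphi(x,\bar a)$ realized by $y$, obtained by iteratively passing to a subformula of strictly smaller Morley rank realized by $y$ until rank $1$ is reached. The canonical parameter $\bar a$ then lies in $\mathrm{acl}^{\mathrm{eq}}(Ky)$, and by elimination of imaginaries in $DCF_0$ (together with a symmetric-function argument on the $K\langle y\rangle$-conjugates of $\bar a$) one descends to an actual tuple $b\in K\langle y\rangle$ interdefinable with $\bar a$ over $K\langle y\rangle$. Since $\tp(y/K\langle b\rangle)$ is a complete extension of the strongly minimal formula $\varphi(x,b)$, it is itself minimal on its (non-algebraic) generic part, hence semiminimal, as required; and $U(b/K)<k$ by additivity of $U$-rank.

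\textbf{Main obstacle.} The principal technical point is the production of the quotient $b$ as a genuine tuple in $K\langle y\rangle$, rather than merely in an imaginary sort, with all three properties — correct $U$-rank, semiminimality of the fiber, and chain-inclusion $b\in K\langle y\rangle$ — simultaneously verified. This requires the existence of strongly minimal subformulas inside any non-algebraic formula of finite Morley rank (a structural feature of totally transcendental theories), elimination of imaginaries in $DCF_0$, and the fact that definable closure in a differential field is captured by the generated differential subfield. These ingredients together are exactly what is packaged in Lemma~2.5.1 of \cite{GST} and Lemma~1.8 of \cite{BUECHLER2008135}; granting them, the inductive bookkeeping above assembles the required semiminimal analysis.
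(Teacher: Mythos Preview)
The paper does not prove this Fact; it records it with a citation to Lemma~2.5.1 of \cite{GST} and Lemma~1.8 of \cite{BUECHLER2008135}, the same sources you invoke. Your inductive shell --- peel off a semiminimal top layer and recurse on the base --- is the standard one and is correct, so at the level of citations your proposal matches the paper's treatment.

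There is, however, one genuine gap in your construction of $b$. You write that after descending to a strongly minimal $\varphi(x,\bar a)$ realized by $y$, ``the canonical parameter $\bar a$ then lies in $\acl^{\mathrm{eq}}(Ky)$''. That step is not justified and is false as stated: an arbitrary parameter cutting out a strongly minimal set through $y$ need not be algebraic over $Ky$ (already in $\mathrm{ACF}$, a single generic point of a plane curve of degree $\geq 2$ does not recover the curve). The object one actually controls is the canonical base $\mathrm{Cb}(\tp(y/K\bar a))$, and in a stable theory this lies in $\dcl^{\mathrm{eq}}$ of a finite \emph{Morley sequence} in that type, not of the single realization $y$. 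So the obstacle is not merely elimination of imaginaries, as your final paragraph suggests, but getting the base inside (copies of) $y$ at all; the usual repair enlarges $y_n$ to a Morley sequence beginning with $y$ and takes $b$ to be the canonical base of their common type, after which $\tp(y_n/K\langle b\rangle)$ is semiminimal by construction. This is precisely what the cited lemmas package, so your deferral to them in the last paragraph is appropriate --- but the direct argument in your middle paragraph does not stand on its own.
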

Finally, recall that for a field $K$, we denote by $K\left(\left(X\right)\right)$ the field of formal Laurent series in variable $X$, while $K\gen{\gen{X}}$ denotes the field of formal Puiseux series, i.e., the field $\bigcup_{d\in{\mathbb{N}}}K\left(\left(X^{1/d}\right)\right)$. It is well know that if $K$ is an algebraically closed field of characteristic zero, then so is $K\gen{\gen{X}}$ (cf. \cite[Corollary 13.15]{Eisenbud}). 

Puiseux series traditionally appear in the study of algebraic solutions of differential equations, however they have also been used by Nishioka (cf. \cite{nishioka1990painleve} and \cite{Nishioka2}) in his work around proving transcendence results for solutions of some classical differential equations. Inspired by those ideas, Nagloo \cite{nagloo2015geometric} and Casale, Freitag and Nagloo \cite{casale2020ax} have also use these techniques to study model theoretic and transcendence properties of solutions of well-known differential equations generalizing the results of Nishioka. In a different direction, Le\'on-S\'anchez and Tressl \cite{Leonlarge} also used Puiseux series in their work on differentially large fields. We will make use of Puiseux series in our proof of strong minimality of special cases of equations of Li\'enard type.

\section{Strong minimality} \label{strminsection}
The set of solutions of the equation $$z z'' = z',\;\;\;\;z'\neq 0$$ in a differentially closed field of characteristic zero were shown by Poizat (see \cite{MMP} for an exposition) to be strongly minimal. 
Poizat's method of proof relies in an essential way on the specific form of the equation being extremely simple.\footnote{The proof is direct; taking an arbitrary differential polynomial $p(z)$ of order one, if the polynomial determines a subvariety, it must be that the vanishing of $p(z)$ implies the vanishing of $zz''-z'$. Considering  $z\delta (p(z)) $ one can apply the relation $zz''=z'$ to obtain a new differential polynomial $q(z)$ of order one such that the vanishing of $p(z)$ implies the vanishing of $q(z)$. It follows that $p(z)$ must divide $q(z)$, and this fact can be used to show that $p(z)$ itself must be of a very restrictive form. One ultimately shows that $p(z) = z'.$} A similar but more complicated variant of the strategy of Poizat was employed in Kolchin's proof of the strong minimality of the first Painlev\'e equation (originally in an unpublished letter from Kolchin to Wood); an exposition appears in \cite{MMP}. In \cite[Chapter 9]{freitag2012model}, another elaboration of the above strategy was employed to show that the set defined by $$zz'''-z''=0 , \, \text{ and  } \, \, z'' \neq 0$$
is strongly minimal. 

In \cite{brestovski1989algebraic}, Brestovski generalized Poizat's theorem to include equations of the form: 
$$z''= z' \left(\frac{B - f_z z' -g_z}{fA} \right),\;\;\;\;z'\neq 0$$ for polynomials $f,g,A,B$ over $\m C$ satisfying very specific conditions.\footnote{When $f, g$ are constant, $B=1, A=z$ the theorem yield's Poizat's result and these choices satisfy Brestovski's assumptions.  The assumptions in Brestovski's theorem are calibrated just so that the strategy of Poizat can be successfully carried out. A complete characterization of strong minimality via this method seems unlikely, due to the complexity of the calculations which appear in the course of the proof in \cite{brestovski1989algebraic}.} We are interested in the case that the derivatives of $z$ appear linearly in the equation (i.e. $f$ is a constant). Then Brestovski's family of equations becomes: 

\begin{equation}\tag{$\star$} \label{stareqn}
z''={z'}f(z),\;\;\;\;z'\neq 0
\end{equation}
where $f(z) \in \m C(z)$. In this case, we give a definitive characterization of the strong minimality:


\begin{thm} \label{stminthm} The solution set of equation (\ref{stareqn}) is strongly minimal if and only if for all $g\in \m C(z)$, we have that $f(z)\neq\frac{d g}{dz}$.
\end{thm}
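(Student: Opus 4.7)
The plan is to use the algebraic criterion of Remark \ref{SMLienard}: $X$ is strongly minimal if and only if for every differential field $K \supseteq \mathbb{C}$ and every $y \in X$, $\mathrm{tr.deg}_K K\langle y\rangle \in \{0, 2\}$.

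For the easy direction, suppose $f = dg/dz$ with $g \in \mathbb{C}(z)$. Then along any solution of (\ref{stareqn}),
$$\frac{d}{dt}\bigl(z' - g(z)\bigr) = z'' - g'(z)z' = z'' - f(z)z' = 0,$$
so $c := z' - g(z)$ is a first integral. This produces a definable surjection $X \to \mathbb{C}$ whose generic fibre is the order-one set $\{z' = g(z) + c\}$, which is infinite. Hence $X$ decomposes into infinitely many infinite definable pieces and fails to be strongly minimal.

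For the hard direction, suppose $f$ is not a rational derivative and assume for contradiction that $X$ is not strongly minimal. Pick $K \supseteq \mathbb{C}$ and $y \in X$ with $\mathrm{tr.deg}_K K\langle y\rangle = 1$; then $y$ is transcendental over $K$ and $y'$ is algebraic over $K(y)$. Let
$$P(Y_1, Y_2) = Y_2^n + a_{n-1}(Y_1)Y_2^{n-1} + \cdots + a_0(Y_1) \in K(Y_1)[Y_2]$$
be the minimal polynomial of $y'$ over $K(y)$, monic in $Y_2$; irreducibility combined with $y' \neq 0$ forces $a_0 \neq 0$. Differentiating $P(y, y') = 0$ in $t$ and substituting $y'' = y'f(y)$ yields $Q(y, y') = 0$, where
$$Q := P^{\delta} + Y_2\, \partial_{Y_1} P + Y_2 f(Y_1)\, \partial_{Y_2} P$$
and $P^{\delta}$ applies $\delta$ to the coefficients of $P$ (with $\delta Y_1 = 0$). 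Since $\deg_{Y_2} Q \le n+1$ and $P$ is irreducible in $K(Y_1)[Y_2]$, $P$ divides $Q$ there and we can write $Q = \bigl(g_0(Y_1) + g_1(Y_1) Y_2\bigr)P$. Matching the coefficients of $Y_2^{n+1}$, $Y_2^n$, and $Y_2^0$, and using $a_n = 1$, yields
$$g_1 = 0, \qquad g_0 = \partial_{Y_1} a_{n-1} + nf, \qquad g_0 = \delta(a_0)/a_0,$$
and hence the key identity
$$nf(Y_1) = \frac{\delta(a_0)}{a_0} - \partial_{Y_1} a_{n-1} \quad\text{in } K(Y_1).$$

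I would finish by a residue analysis over $K^{\mathrm{alg}}(Y_1)$. The function $\partial_{Y_1} a_{n-1}$ is $\partial_{Y_1}$-exact, so its residue at every point vanishes. Expanding $a_0$ as a Puiseux (in fact Laurent) series at each $\alpha \in K^{\mathrm{alg}}$, one computes $\mathrm{Res}_\alpha(\delta(a_0)/a_0) = -m_\alpha\, \delta\alpha$, where $m_\alpha \in \mathbb{Z}$ is the order of $a_0$ at $\alpha$. Taking residues at $\alpha$ in the key identity: if $\alpha \in \mathbb{C}$ then $\delta\alpha = 0$, so $\mathrm{Res}_\alpha(f) = 0$; if $\alpha \in K^{\mathrm{alg}} \setminus \mathbb{C}$, then $f \in \mathbb{C}(Y_1)$ has no pole at $\alpha$, so $m_\alpha \delta\alpha = 0$ and hence $m_\alpha = 0$. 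Thus every residue of $f$ at a finite point vanishes, so $f$ is the $Y_1$-derivative of some rational function in $\mathbb{C}(Y_1)$, contradicting the hypothesis. The main obstacle is the residue calculus at points $\alpha \in K^{\mathrm{alg}}\setminus \mathbb{C}$ where the coefficients of $P$ may have non-trivial zeros or poles whose position depends on the differential base $K$; the valuation-theoretic (Puiseux) viewpoint is exactly what makes the identification $\mathrm{Res}_\alpha(\delta(a_0)/a_0) = -m_\alpha\, \delta\alpha$ transparent and allows the tension between the $\delta$-derivative and the $\partial_{Y_1}$-derivative to be exploited.
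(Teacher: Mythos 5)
Your proof is correct, but the hard direction takes a genuinely different route from the paper's. The paper, after translating so that $f$ has a nonzero residue at $0$, embeds $u=y'$ into the Puiseux series field $K^{alg}\gen{\gen{y}}$ and makes a single local computation: writing $u=\sum a_iy^{r+i/m}$, the coefficient of $y^{-1}$ in $\delta(u)/u$ is forced to vanish (the only candidate term has coefficient $(r+i/m)a_i$ with $r+i/m=0$), contradicting $\delta(u)/u=f(y)$ having a nonzero residue at $0$. You instead form the minimal polynomial $P$ of $y'$ over $K(y)$, use the classical divisibility trick $P\mid Q$ to extract the global identity $nf=\delta(a_0)/a_0-\partial_{Y_1}a_{n-1}$ in $K(Y_1)$, and then read off all residues of $f$ at once from the formula $\mathrm{Res}_\alpha(\delta(a_0)/a_0)=-m_\alpha\,\delta\alpha$; your coefficient matching (including $a_0\neq 0$ from irreducibility and $y'\neq 0$) checks out, and in fact $\deg_{Y_2}Q\leq n$ so $g_1=0$ is automatic. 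What the paper's argument buys is brevity and the avoidance of the minimal polynomial altogether --- one well-chosen local expansion suffices. What your argument buys is a Rosenlicht-style global identity that characterizes exactly which residues of $f$ can be ``absorbed'' by an algebraic relation, which ties the strong minimality proof directly to the logarithmic-derivative machinery the paper deploys elsewhere (Fact \ref{AxLemma} and Theorem \ref{rosorth}), and is arguably more elementary in that it only ever expands the rational function $a_0$, never the solution itself. Both proofs isolate the same obstruction --- a $\delta$-logarithmic derivative cannot manufacture the nonzero residue of $f\in\mathbb{C}(z)$ --- so the difference is one of implementation rather than of underlying idea.
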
 

\begin{proof} Clearly, if $f(z)=\frac{d g}{dz}$ for some $g\in \m C(z)$, then any solution of $z'=g(z)+c$, $c\in \m C$, is also a solution to $\frac{z''}{z'} = f(z)$. Hence the solution set of equation (\ref{stareqn}) is not strongly minimal and indeed has rank 2.

Now assume that $f(z)$ has partial fraction decomposition
$$f(z)=\frac{d g}{dz}+\sum_{i=1}^n{\frac{c_i}{z-a_i}}$$ where the $a_i$'s are distinct and some $c_i\neq 0$. Without loss of generality assume $c_1\neq 0$. Then $f(z)$ has a nonzero residue at $a_1$. Considering the change of variable $z\mapsto z-a_1$ we may assume that $f(z)$ has a nonzero residue at $0$.

Arguing by contradiction, let us assume that the solution set of equation (\ref{stareqn}) is not strongly minimal. Then for some $K$, a finitely generated differential field extending $\m C$\footnote{Formally, we work with $\mathcal{C}\subset \m C$ a subfield finitely generated over $\m Q$ by the coefficients of the equation.} with derivation $\delta$, and $y$ a solution of (\ref{stareqn}) we have that $u = \delta(y) \in K(y)^{alg}.$

\sloppy We can think of $u$ as living in the field of Puiseux series $K^{alg}\gen{\gen{y}}$ with the usual valuation $v$ and the derivation
$$\delta \left(\sum a_iy^i\right)=\sum \delta(a_i)y^i+\left(\sum ia_iy^{i-1}\right)\delta(y).$$

So $$u=\sum_{i=0}^{\infty} a_iy^{r+\frac{i}{m}},$$ where $v(u)=r$ and $m$ is the ramification exponent. Differentiating we get
$$\delta(u)=\sum_{i=0}^{\infty} \delta(a_i)y^{r+\frac{i}{m}}+u\left(\sum_{i=0}^{\infty} (r+\frac{i}{m})a_iy^{r+\frac{i}{m}-1}\right).$$
Since $$v\left(\sum_{i=0}^{\infty} \delta(a_i)y^{r+\frac{i}{m}}\right)\geq r,$$ we have that 

$$\frac{\delta(u)}{u}=\alpha +\sum_{i=0}^{\infty} (r+\frac{i}{m})a_iy^{r+\frac{i}{m}-1},$$
where $v(\alpha)\geq 0$. The right hand side of this equation is equal to $f(y)$ and so there should be a nonzero residue. But the coefficient of $y^{-1}$ on the right hand side is 0. This is a contradiction.
\end{proof} 
Since Equation (\ref{stareqn}) has constant coefficients, it follows from Theorem \ref{stminthm} and Fact \ref{autonomous}(2) (see \cite[Proposition 5.8]{casale2020ax} for a proof) that:
\begin{cor} \label{triviality} The solution set of equation (\ref{stareqn}) for $f(z)$ not the derivative of any rational function is geometrically trivial.
\end{cor}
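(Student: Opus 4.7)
The plan is to apply Fact \ref{autonomous}(2) directly. Equation (\ref{stareqn}) has constant coefficients, so its solution set $X$ is defined over $\mathbb{C}$, and by Theorem \ref{stminthm} it is strongly minimal under the hypothesis on $f$. The only remaining hypothesis of Fact \ref{autonomous} to check is that $\ord(X) > 1$.

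For that, I would first observe that $X$ is cut out by a second-order ODE, so $\ord(X) \leq 2$. To rule out $\ord(X) = 1$, it suffices to show that for a generic solution $y$ of the equation over $\mathbb{C}$ one has $y' \notin \mathbb{C}(y)^{alg}$. This is essentially the content of the Puiseux-series computation already used inside the proof of Theorem \ref{stminthm}, now applied with base field $K = \mathbb{C}$: if $y' \in \mathbb{C}(y)^{alg}$, then writing $u = y'$ as a Puiseux series in $y$ and computing $\delta(u)/u = f(y)$, the coefficient of $y^{-1}$ on the left-hand side is $0$, while on the right-hand side it equals the residue of $f$ at the pole we have translated to $0$. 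Since $f$ has some nonzero residue (equivalently, $f$ is not the derivative of a rational function), this yields a contradiction. Hence $\ord(X) = 2 > 1$.

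With strong minimality, definability over $\mathbb{C}$, and $\ord(X) > 1$ all in hand, Fact \ref{autonomous}(2) applies and yields geometric triviality of $X$. The bulk of the work has already been carried out in Theorem \ref{stminthm}; no serious new obstacle arises, so the corollary is a formal consequence of the previous theorem together with the general structural fact about autonomous strongly minimal sets of order greater than one.
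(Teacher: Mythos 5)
Your proposal is correct and follows essentially the same route as the paper, which deduces the corollary in one line from Theorem \ref{stminthm}, the constancy of the coefficients, and Fact \ref{autonomous}(2). The only addition is your explicit verification that $\ord$ of the solution set is $2>1$ via the Puiseux computation; the paper leaves this implicit, and your check is a correct (if slightly redundant) way to confirm the remaining hypothesis of Fact \ref{autonomous}.
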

The previous corollary already gives strong restrictions on the possible algebraic relations between solutions of Equation (\ref{stareqn}), but sections \ref{formssection} and \ref{algrelsection} are devoted to giving a complete classification. Following this, we turn to similar questions in the case that $f(z)$ is the derivative of a rational function. Before we do so let us describe the connection between Theorem \ref{stminthm} and (non)integrability of equations of Li\'enard type.

\section{Solutions and integrability} \label{integrabilityandnot}
Equations of the form: 
\begin{equation} \label{Lie}
    x'' (t) + f(x) x' (t) + g(x) =0,
\end{equation}
for $f(x),g(x)$ rational functions have their origins in the work of Li\'enard \cite{Lie1, Lie2} and have important applications in numerous scientific areas. For instance, the solutions can be used to model oscillating circuits; see page 2 of \cite{harko2014class} for numerous references. Numerous recent works are devoted to giving explicit solutions or first integrals of Equation \ref{Lie} in special cases or showing that none can be expressed in terms of special functions in some class (e.g. Liouvillian, elementary). In this section, we first point out some general connections between solutions in special classes of solutions, first integrals, and the model theoretic notions we study. Following this, we describe some existing results for Li\'enard equations then give some results based on model theoretic ideas and our work in Section \ref{strminsection}. 

\subsection{Special classes of solutions} 
In this section, we give results connecting our model theoretic notions to several classically studied classes of solutions. 
\begin{defn}
Let $(F, \Delta)$ be a differential field (generally we are interested in the case $F=\m C(x,y)$ with the derivations $\frac{d}{dx}, \frac{d}{dy}$). We say that   
a finitely generated differential field extension $(K, \Delta)$ of $F$ is \emph{elementary} if there is a tower of differential field extensions $F=F_0 \subset F_1 \ldots , \subset F_n = K$ such that for all $i=1, \ldots n$ we have that $F_i = F_{i-1} (\alpha ) $ where $\alpha$ is such that: 
\begin{enumerate}
    \item $\delta \alpha  = \delta f /f$ for some $f \in F_{i-1}$ and for all $\delta \in \Delta$ \emph{or} 
    \item $\delta \alpha /\alpha =\delta f$ for some $f \in F_{i-1}$ and for all $\delta \in \Delta$ \emph{or}
    \item $\alpha \in F_{i-1}^{alg}.$
\end{enumerate}
\end{defn}

The class of Liouvillian functions is more general than the class of elementary functions: 
\begin{defn}
Let $(F, \Delta)$ be a differential field. We say that   
a finitely generated differential field extension $(K, \Delta)$ of $F$ is \emph{Liouvillian} if there is a tower of differential field extensions $F=F_0 \subset F_1 \ldots , \subset F_n = K$ such that for all $i=1, \ldots n$ we have that $F_i = F_{i-1} (\alpha ) $ where $\alpha$ is such that: 
\begin{enumerate}
    \item $\delta \alpha  \in F_{i-1}$ for all $\delta \in \Delta$ \emph{or} 
    \item $\delta \alpha /\alpha  \in F_{i-1}$ for all $\delta \in \Delta$ \emph{or}
    \item $\alpha \in F_{i-1}^{alg}.$
\end{enumerate}
\end{defn}

We next give several more special classes of functions generalizing Liouvillian and elementary functions. 

\begin{defn}\footnote{The notion of a Pfaffian function is most commonly defined for a real-valued function of a real variable, but we formulate the complex analog as well which fits more naturally with the results of this paper. Both notions are closely connected to model theoretic notions from the theory of differentially closed fields. See \cite{freitag2021not}.} Let $f_1, \ldots , f_l $ be complex analytic functions on some domain $U \subseteq \m C^n$. We will call $(f_1, \ldots , f_l)$ a \emph{$\m C $-Pfaffian chain} if there are polynomials $p_{ij}(u_1, \ldots , u_n , v_1, \ldots, v_i )$ with coefficients in $\m C$ such that $$\pd{f_i}{x_j}= p_{ij} \left( \bar x, f_1 ( \bar x), \ldots , f_i (\bar x ) \right)$$ 
for $1 \leq i \leq l$ and $1 \leq j \leq n.$
We call a function \emph{$\m C$-Pfaffian} if it can be written as a polynomial (coefficients in $\m C$) in the functions of some $\m C$-Pfaffian chain.
\end{defn}

Finally, we come to the most general notion we consider, a condition that was developed by Nishioka \cite{nishioka1990painleve, nishiokaII}:

\begin{defn}
Let $y$ be differentially algebraic over a differential field $k$. We say $a$ is \emph{$r$-reducible over $k$} if there exists a finite chain of $k$-finitely generated differential field extensions, $$k=R_0 \subset R_1 \subset \ldots R_m$$ such that $a \in R_m$ and $\trdeg{R_i/R_{i-1}} \leq r.$ 
\end{defn}

\begin{thm}
If $X$ is a strongly minimal differential equation of order $n$ defined over a finitely generated differential field $K$, then any nonalgebraic solution $f$ of $X$ is not $d$-reducible for any $d<n.$ It also follows that $f$ is not Pfaffian, Liouvillian, or elementary.
\end{thm}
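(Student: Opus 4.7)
The plan is to prove the reducibility claim by contradiction from strong minimality, and then deduce the corollary by verifying that Pfaffian, Liouvillian, and elementary functions are all $1$-reducible over $K$.

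Suppose toward contradiction that $f$ is a nonalgebraic solution of $X$ and is $d$-reducible over $K$ for some $d < n$, via a chain $K = R_0 \subset R_1 \subset \cdots \subset R_m$ of differential field extensions with $f \in R_m$ and $\trdeg(R_i/R_{i-1}) \leq d$. Let $i$ be the smallest index for which $f$ is algebraic over $R_i$; this exists since $f \in R_m$, and satisfies $i \geq 1$ since $f$ is nonalgebraic over $R_0 = K$. The differential-algebra fact I need is that $f$ algebraic over the differential field $R_i$ forces $R_i\langle f\rangle = R_i(f) \subseteq R_i^{\mathrm{alg}}$: differentiating the minimal polynomial of $f$ over $R_i$ expresses $\delta f$ as a rational function in $f$ over $R_i$, and induction handles higher derivatives. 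Therefore $R_{i-1}\langle f\rangle \subseteq R_i^{\mathrm{alg}}$, so
\[
\trdeg_{R_{i-1}} R_{i-1}\langle f\rangle \;\leq\; \trdeg_{R_{i-1}} R_i \;\leq\; d \;<\; n.
\]
But $f$ is nonalgebraic over $R_{i-1}$, so strong minimality of $X$ applied with base $R_{i-1}$ forces $\trdeg_{R_{i-1}} R_{i-1}\langle f\rangle = n$, a contradiction.

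For the final sentence, I would verify $1$-reducibility directly from the definitions. A Liouvillian or elementary tower $F = F_0 \subset \cdots \subset F_N$ is already a chain of differential field extensions, each step $F_i = F_{i-1}(\alpha)$ either algebraic or satisfying $\delta\alpha \in F_{i-1}(\alpha)$, hence of transcendence degree at most $1$. For a $\m C$-Pfaffian function built from a chain $f_1,\ldots,f_l$, the tower $\m C(x) \subset \m C(x,f_1) \subset \cdots \subset \m C(x,f_1,\ldots,f_l)$ is a chain of differential subfields of the meromorphic functions, each step of transcendence degree at most $1$ by the Pfaffian relations, with $f$ in the top. Prepending $K$ and composing yields a chain of differential field extensions over $K$ of transcendence degree at most $1$ at each step, so $f$ is $1$-reducible over $K$; since $1 < n$ whenever $n \geq 2$, the main claim precludes this, giving the stated exclusion for any strongly minimal $X$ of order at least $2$.

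The main subtlety is the prepending-$K$ step in the Pfaffian/Liouvillian/elementary argument: one must match the ground field of the special-function definition with $K$, which I would handle by observing that transcendence degree is subadditive under compositums, so absorbing a finitely generated field into $K$ preserves the $\trdeg \leq 1$ bound at each step of the tower.
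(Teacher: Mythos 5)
Your proof is correct and follows essentially the same route as the paper: locate the first stage $R_i$ of the chain over which $f$ becomes algebraic, observe that $\operatorname{trdeg}_{R_{i-1}} R_{i-1}\langle f\rangle$ is then squeezed strictly between $0$ and $n$, contradicting strong minimality over the base $R_{i-1}$, and then check that the Pfaffian, Liouvillian, and elementary towers are $1$-reducible. If anything, your version is slightly more careful than the paper's (which computes the transcendence degree over $R_{m_1}$ where it should be over the preceding field, and leaves the $n\geq 2$ caveat and the $R_i\langle f\rangle\subseteq R_i^{\mathrm{alg}}$ step implicit).
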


\begin{proof}
Recall, from Remark \ref{SMLienard} that the zero set of our differential equation $X$ with coefficients in a differential field $K$ is strongly minimal if and only if (1) the equation is irreducible over $K^{alg}$ (as a polynomial in several variables) and (2) given any solution $f$ of $X$ and \emph{any differential field extension $F$ of $K$}, $$\text{trdeg}_F F\langle f \rangle =  \text{trdeg}_K K \langle f \rangle \text{ or } 0.$$ If $f$ were $d$-reducible for $d<n$, as witnessed by some chain $K=R_0 \subset R_1 \subset \ldots R_m$, then we can assume that $f$ is transcendental over $R_{m_1}$ for some $m_1 \leq m$ and algebraic over $R_{m_1}$. But then the differential field $R_{m_1}$ has the property that $\text{trdeg}_{R_{m_1}} \left( R_{m_1} \langle f \rangle \right) \leq d <n,$ contradicting strong minimality of $X$. Of course, each of the classes Pfaffian, Liouvillian, and elementary are $1$-reducible, so $f$ can not be in any of these classes either. 
\end{proof}

Assuming a weaker model theoretic notion about $X$ allows one to rule out Liouvillian solutions, but not Pfaffian solutions:  

\begin{thm} \label{fact} Let $X$ be a differential equation of order $n$ defined over a finitely generated differential field $K$. 
Suppose the generic type of $X$ is not analyzable in the constants; then the generic solution of $X$ is not Liouvillian. 

Suppose further that $X$ is orthogonal to the constants. Then any nonalgebraic solution $f$ of $X$ is not Liouvillian. 
\end{thm}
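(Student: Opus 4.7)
The plan is to establish the key implication: if $f$ is Liouvillian over $K$, then $\tp(f/K)$ is analysable in $\m C$. Both statements of the theorem then follow by contraposition.

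First I would examine each of the three building blocks in a Liouvillian tower $K=F_{0}\subset F_{1}\subset\cdots\subset F_{n}$ and verify that the type of the new generator $\alpha_{i}$ over $F_{i-1}$ is either algebraic or internal to $\m C$. In case (1), $\delta\alpha_{i}=g\in F_{i-1}$: any two solutions of $\delta x=g$ differ by an element of $\m C$, so taking a fresh solution $\alpha_{0}$ of this equation with $\alpha_{0}\ind_{F_{i-1}}\alpha_{i}$, we obtain $\alpha_{i}=\alpha_{0}+c$ for some $c\in\m C$, hence $\alpha_{i}\in F_{i-1}\gen{\alpha_{0}}(\m C)$; this exhibits internality of $\tp(\alpha_{i}/F_{i-1})$ to $\m C$ via Fact \ref{internality}(1). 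In case (2), $\delta\alpha_{i}/\alpha_{i}=g\in F_{i-1}$, any two solutions differ by a multiplicative constant, and the same argument goes through with $\m C^{\ast}$ replacing $\m C$. Case (3) is precisely the algebraic alternative allowed in the definition of analysability.

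With these observations, the sequence $(\alpha_{1},\ldots,\alpha_{n})$ becomes a witness for analysability of $\tp(f/K)$ in $\m C$: each intermediate field $F_{i-1}$ coincides with $K\gen{\alpha_{1},\ldots,\alpha_{i-1}}$ (immediate since each $F_{i}$ is closed under $\delta$ by the definition of a Liouvillian step), and $f$ lies in the final field $F_{n}=K\gen{\alpha_{1},\ldots,\alpha_{n}}$. After passing to the algebraic closure at each step (i.e., interleaving the $\alpha_{i}$ with generators of the relevant algebraic closures, which the definition of analysability permits through its ``or algebraic'' clause), the intermediate types become stationary as required. This gives the first statement of the theorem.

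For the second statement, assume $X$ is orthogonal to $\m C$ and let $f\in X$ be nonalgebraic. Unwinding the definition of orthogonality from the introduction, $\tp(f/K)$ is orthogonal to $\m C$; since analysability in $\m C$ is equivalent to every extension of the type being nonorthogonal to $\m C$, and any type is trivially an extension of itself, $\tp(f/K)$ cannot be analysable. By the implication just established, $f$ is not Liouvillian. The main obstacle is purely bookkeeping, namely the stationarity requirement in the definition of analysability and the careful identification of the intermediate fields $F_{i}$ with the differential fields $K\gen{\alpha_{1},\ldots,\alpha_{i}}$, while the substantive content is simply that primitive and exponential extensions are ``constant-coset'' extensions, which transparently exhibits internality to $\m C$.
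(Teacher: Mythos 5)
Your proposal is correct and follows essentially the same route as the paper: read the Liouvillian tower as an analysis of $\tp(f/K)$ in $\m C$ (each primitive or exponential step being internal to the constants, each algebraic step being allowed by the definition), then conclude both statements by contraposition, using for the second that a nonalgebraic type orthogonal to $\m C$ cannot be analysable. The only difference is that you spell out the internality of each step via the constant-coset argument and Fact \ref{internality}(1), where the paper simply asserts it with a reference; this is a welcome elaboration rather than a different proof.
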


\begin{proof}
Recall from Fact \ref{semifact} that every finite rank type has a semiminimal analysis. The extensions appearing in the definition of $f$ being Liouvillian are either algebraic or generated by the generic solution of an order one linear differential equation. The type of the generator of this extension is \emph{internal to the constants}\footnote{See Fact \ref{trichotomy}.} over the previous field in the tower, so the type of $f$ over $K$ is analyzable in the constants. 

If $X$ (as a definable set) is orthogonal to the constants, then any type $q$ in $X$ not algebraic over $k$ has the property that $q$ is orthogonal to the constants. This implies $q$ is not analyzable in the constants, so any realization of $q$ is not Liouvillian. 
\end{proof}

Non-analyzability or even orthogonality to the constants does not rule out the more general Pfaffian or $d$-reducible solutions as above. The connection between integrability in Liouvillian or elementary terms and our model theoretic notions is more subtle than the connection to the existence of solutions, as we explain in the next subsection.

\subsection{Notions of integrability}
We will begin by describing some general notions around integrability. Consider a system of autonomous differential equations \begin{equation} 
\bar x ' = P(\bar x ) \label{vecfield}
\end{equation} where $P=(P_1,\dots,P_n)$ are polynomial or rational functions in the variables $\bar x = (x_1, \ldots , x_n) $ with coefficients in $\m C^n$. 



A \emph{first integral} of the system is a non constant meromorphic function of $\bar x$ which is constant along solution curves of the system, i.e., $F: U \subset {\mathbb C}^n\rightarrow {\mathbb C}$ defined on some non-empty analytic open set $U$ of $\mathbb{C}^n$ with $$\sum_{i=1}^n P_i(\bar x){\partial F\over \partial x_i}=0.$$
Meromorphic (and even holomorphic) first integrals always exist in an analytic neighborhood of a non-singular point of the equation; furthermore if $F$ is a first integral of the system on some open set $U$ then it is a first integral on any open set $U \subset V$ where $F$ can be analytically continued.  In particular, if $F$ is a rational function then the open set $U$ can be taken to be the Zariski-open set of $\mathbb{C}^n$ where $F$ is well-defined.

Usually, one is interested in first integrals from various special classes of functions. For instance, a \emph{Darboux integral} \cite{MR3563433} of the system is one of the special form: 
$$f_1 ( \bar x ) ^{r_1} \ldots f_k ( \bar x ) ^{r_k} e^{h(x) /g(x)} $$ for polynomials $f_i,g,h$ and $r_j \in \m C$. 

Associated with the polynomials $P(\bar x ) =  (P_1 (\bar x ), \ldots , P_n (\bar x ))$ is the vector field $$\tau _P := P_1( \bar x ) \pd{}{x_1} + \ldots + P_n (\bar x ) \pd{}{x_n}.$$ 
A \emph{Darboux polynomial} of the system is $f ( \bar x ) \in \m C[ \bar x ]$ such that $\tau _P (f) $ divides $f$. This condition is equivalent to the zero set of $f$ being an invariant algebraic hypersurface for the vector field $\tau$. The connection to integrability is given by results originally due to Darboux and Jouanolou, see \cite[Theorem 3]{MR2902728}.

\begin{fact}
Suppose that a polynomial vector field $\tau$ of degree at most $d$ has irreducible invariant hypersurfaces given by the zero set of $f_i$ for $i=1, \ldots k$ and suppose that the $f_i$ are relatively prime. Then: 
\begin{enumerate}
    \item If $k \geq \binom{n+d-1}{n}+1$ then $\tau $ has a Darboux integral. 
    \item If $k \geq \binom{n+d-1}{n}+n$ then $\tau $ has a rational first integral. 
\end{enumerate}
\end{fact}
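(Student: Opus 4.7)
The plan is to use the classical cofactor method. For each irreducible Darboux polynomial $f_i$, the invariance of $\{f_i = 0\}$ means $\tau(f_i)$ vanishes on this hypersurface, so by irreducibility $f_i \mid \tau(f_i)$ and hence $\tau(f_i) = L_i f_i$ with $L_i \in \m C[\bar x]$ (the cofactor). Writing $\tau = \sum_j P_j \partial/\partial x_j$ with $\deg P_j \leq d$ gives $\deg L_i \leq d-1$, so all cofactors sit inside the $\m C$-vector space $V := \m C[\bar x]_{\leq d-1}$ of dimension $N := \binom{n+d-1}{n}$. For part (1), if $k \geq N+1$ then $L_1, \ldots, L_k$ are $\m C$-linearly dependent: there exist $c_i \in \m C$ not all zero with $\sum_i c_i L_i = 0$. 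Then the formal product $F := \prod_i f_i^{c_i}$ satisfies $\tau(F)/F = \sum_i c_i \tau(f_i)/f_i = \sum_i c_i L_i = 0$ by logarithmic differentiation, so $F$ is a first integral of the form required by the definition of Darboux integral (with the exponential factor trivial).

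For part (2), the stronger hypothesis $k \geq N + n$ forces the kernel of the $\m C$-linear map $\phi: \m C^k \to V$, $\phi(c) = \sum_i c_i L_i$, to have dimension at least $n$. Choosing $n$ linearly independent relations $c^{(1)}, \ldots, c^{(n)} \in \ker \phi$ yields $n$ Darboux first integrals $F_j := \prod_i f_i^{c^{(j)}_i}$. Because $\tau$ generates a one-dimensional foliation of $\m C^n$, at most $n-1$ meromorphic first integrals can be functionally independent, so the $F_j$ must satisfy some nontrivial polynomial identity $P(F_1, \ldots, F_n) \equiv 0$. Each monomial appearing in $P(F_1, \ldots, F_n)$ is of the form $\prod_i f_i^{\lambda_i}$ with exponent vector $\lambda \in \m C^k$ in the $\mathbb{Z}$-span of the $c^{(j)}$, and grouping monomials with matching exponent class and taking ratios eventually yields a nonconstant rational function invariant under $\tau$.

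The hard part will be the final step of part (2): upgrading the $\m C$-linear relations among cofactors (which a priori produce only Darboux-type first integrals with complex exponents) to a relation taking values in the rational function field $\m C(\bar x)$. The functional dependence of the $F_j$ provides the necessary bridge by producing a genuine polynomial identity, but extracting from it a nonconstant rational first integral, rather than a vacuous identity reflecting algebraic relations that already hold among the $f_i$ as polynomials, requires a careful analysis of the exponent lattice $\Lambda := \sum_j \mathbb{Z}\cdot c^{(j)} \subset \m C^k$ and makes essential use of the coprimality of the $f_i$. This combinatorial extraction is the technical heart of Jouanolou's refinement of Darboux's theorem, and is where the bulk of the proof effort will concentrate.
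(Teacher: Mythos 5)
The paper does not actually prove this Fact: it is quoted from the literature with a citation to Jouanolou's refinement of Darboux's theorem, so there is no in-paper argument to compare against. Your part (1) is correct and complete, and it is the classical Darboux argument: invariance plus irreducibility gives $\tau(f_i)=L_if_i$, the degree bound $\deg L_i\le d-1$ places all cofactors in a space of dimension $\binom{n+d-1}{n}$, and a linear dependence $\sum_i c_iL_i=0$ yields $\tau(F)/F=0$ for $F=\prod_i f_i^{c_i}$. Coprimality of the $f_i$ ensures that $dF/F=\sum_i c_i\,df_i/f_i$ is a nonzero rational $1$-form (compare residues along the distinct divisors $\{f_i=0\}$), so $F$ is a nonconstant, locally defined Darboux integral in the sense of the paper's definition.

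Part (2) has a genuine gap, which you correctly locate but do not fill, and the route you sketch for filling it does not work as stated. The objects $F_j=\prod_i f_i^{c^{(j)}_i}$ carry arbitrary complex exponents; they are multivalued transcendental functions, so they satisfy no polynomial identity $P(F_1,\dots,F_n)\equiv 0$, and the bound of $n-1$ functionally independent first integrals that you invoke applies to meromorphic first integrals, not to these. The standard repair is to replace the $F_j$ by the closed rational $1$-forms $\omega_j:=\sum_i c^{(j)}_i\,df_i/f_i$, which are genuine rational $1$-forms, are $\mathbb{C}$-linearly independent (again by coprimality, via residues), and satisfy $i_\tau\omega_j=0$. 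Since the space of rational $1$-forms annihilated by $i_\tau$ has dimension $n-1$ over the rational function field, the $n$ forms admit a nontrivial relation $\sum_j g_j\omega_j=0$ with coefficients $g_j$ rational and not all constant; applying $d$ to this relation and using closedness of the $\omega_j$, one extracts a nonconstant rational first integral from the $g_j$. That extraction is precisely Jouanolou's lemma, and it is exactly the step your proposal defers, so as written the proof of (2) is incomplete.
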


In model theoretic terms, even in the nonautonomous case, there is a close connection between co-order one differential subvarieties of a differential algebraic variety and nonorthogonality to the constants, see \cite{freitag2017finiteness}. Of course, the relation to the previous section is: strong minimality of a second order (or higher) system of differential equations implies that the system has no Darboux polynomials. In fact strong minimality and the other model theoretic notions we study go a good deal further, but as we will see, our model theoretic notions are more closely connected to the existence of solutions in various special classes rather than integrability in those classes.

\begin{defn}
We call a first integral $F$ \emph{elementary (Liouvillian)} if $F$ is an elementary (Liouvillian) function.\footnote{Any of the special classes of functions we mention in the previous subsection might be used to develop notions of integrability, but to our knowledge there is no development of integrability in terms of Pfaffian or $r$-reducible functions.}
\end{defn}

We first remark that one can reduce the study of algebraic integrals to the study of rational integrals. 

\begin{lem}
Let $X$ be a vector field on some algebraic variety over $\mathbb{C}$.  If $X$ has an algebraic first integral then $X$ has a rational first integral
\end{lem}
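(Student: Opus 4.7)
The plan is to exploit the minimal polynomial of the algebraic first integral $F$ over the function field $K := \mathbb{C}(V)$ of the variety and show that its coefficients are themselves first integrals; since $F$ is by definition non-constant, at least one of these coefficients must be a non-constant element of $K$, producing a rational first integral.

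In more detail, the vector field $X$ induces a $\mathbb{C}$-derivation $D$ on $K$, and the first integral condition $X(F) = 0$ is the condition $D(F) = 0$ after extending $D$ to $K(F)$. Since we are in characteristic zero and $F$ is algebraic (hence separable) over $K$, this extension of $D$ to $K(F)$ is unique. Let
\[
p(T) = T^n + a_{n-1} T^{n-1} + \cdots + a_1 T + a_0 \in K[T]
\]
be the minimal polynomial of $F$ over $K$. Applying $D$ to $p(F) = 0$ and using $D(F) = 0$ together with the Leibniz rule gives
\[
D(a_{n-1}) F^{n-1} + \cdots + D(a_1) F + D(a_0) = 0,
\]
which is a polynomial relation in $F$ over $K$ of degree strictly less than $n$. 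By minimality of $p(T)$, each $D(a_i) = 0$, so every coefficient $a_i$ is a first integral of $X$.

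It remains to produce an $a_i$ that is non-constant as an element of $K$. If all the $a_i$ were in $\mathbb{C}$, then $F$ would satisfy a polynomial with constant complex coefficients and hence be algebraic over $\mathbb{C}$, forcing $F$ to be a constant function on $V$, contradicting the fact that $F$ is a first integral (which is non-constant by definition). Therefore some $a_i \in K \setminus \mathbb{C}$ gives the desired rational first integral. The only mild subtleties are verifying that the derivation $D$ extends uniquely to $K(F)$ (immediate in characteristic zero) and that the non-constancy of $F$ excludes the degenerate case in which every $a_i$ is scalar; both points are essentially formal but should be stated explicitly.
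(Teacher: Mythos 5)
Your proof is correct and takes essentially the same approach as the paper: both arguments reduce the lemma to showing that the coefficients of the minimal polynomial of the algebraic first integral over $\mathbb{C}(V)$ are themselves constants of the induced derivation, and then observe that they cannot all lie in $\mathbb{C}$ since $F$ is non-constant and $\mathbb{C}$ is algebraically closed. The only difference is the justification of that key step --- you differentiate $p(F)=0$ and invoke minimality of the degree, while the paper uses the fact that the unique extension of the derivation to $\mathbb{C}(V)^{alg}$ commutes with the Galois action so that the conjugates of $F$ are also constants --- but these are interchangeable standard arguments.
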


\begin{proof}
We denote by $V$ the algebraic variety supporting $X$ and by $\delta$ the derivation induced by $X$ on $\mathbb{C}(V)$.  First remark that since $\delta$ extends uniquely to a derivation $\overline{\delta}$ on $\mathbb{C}(V)^{alg}$, we have 
$$ \overline{\delta} \circ \sigma = \sigma \circ \overline{\delta} \text{ for all } \sigma \in Gal(\mathbb{C}(V)^{alg}/\mathbb{C}(V))$$
as $\sigma^{-1} \circ \overline{\delta} \circ \sigma$ is another derivation on $\mathbb{C}(V)^{alg}$ extending $\delta$.

Assume now that $X$ has no rational first integrals and consider $f \in \mathbb{C}(V)^{alg}$ such that $\overline{\delta}(f) = 0$.  Then by the remark above,  we also have $\overline{\delta}(\sigma(f)) = 0$ for all $\sigma \in Gal(\mathbb{C}(V)^{alg}/\mathbb{C}(V))$.  Hence the coefficients $a_1,\ldots, a_r \in \mathbb{C}(V)$ of the minimal polynomial of $f$ over $\mathbb{C}(V)$ satisfy $\delta(a_i) = \overline{\delta}(a_i) = 0$ and therefore by assumption $a_1,\ldots, a_r \in \mathbb{C}$.  Since $\mathbb{C}$ is algebraically closed,  we conclude that $f \in \mathbb{C}$ and that $X$ does not have any algebraic integral either.
\end{proof}

\begin{thm}
Let $X$ be a vector field on some algebraic variety over $\m C$. If $X$ has an algebraic first integral, then $X$ is not orthogonal to the constants. 
\end{thm}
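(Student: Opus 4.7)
The plan is to apply the preceding lemma to reduce to the case where $X$ admits a non-constant rational first integral $F \in \m C(V)$, replacing $V$ by an irreducible component on which $F$ is non-constant if necessary, so that we may assume $V$ itself is irreducible. I will then fix a finitely generated subfield $k \subset \m C$ containing the coefficients of $V$, of the vector field, and of $F$.

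Next, I will take a generic solution $a$ of $X$ over $k$, i.e.\ one satisfying $\trdeg_k k\gen{a} = \dim V$. The key easy observation is that the system $\bar{x}' = P(\bar{x})$ expresses every derivative of every component of $a$ as a rational function of the components of $a$ themselves, so $k\gen{a} = k(a)$. Consequently $\trdeg_k k(a) = \dim V$, which is precisely the statement that $a$ is a generic point of the irreducible variety $V$ in the usual algebraic-geometric sense: the only algebraic relations over $k$ satisfied by the components of $a$ are those in the ideal of $V$.

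Finally, I will set $c := F(a) \in k(a) = k\gen{a}$. Then $c$ is a constant, because $F$ being a first integral gives $\delta(c) = \sum_i P_i(a)\,\partial_i F(a) = 0$; and $c$ is transcendental over $k$, because $F$ is a non-constant rational function on $V$ evaluated at a generic $k$-point of $V$ (any value $c \in k^{alg}$ would force $a$ into the proper subvariety $\{F = c\} \subsetneq V$). Thus $k\gen{a}$ contains a constant not in $k^{alg}$, which by the definition recalled in the introduction is exactly non-orthogonality of $X$ to the constants. No substantive obstacle arises here, since the reduction from algebraic to rational first integrals is handled by the preceding lemma; the only conceptual point is recognising that a differentially generic solution of an autonomous polynomial vector field coincides with an algebraically generic point of the ambient variety, which is automatic from $k\gen{a} = k(a)$ together with $\trdeg_k k\gen{a} = \dim V$.
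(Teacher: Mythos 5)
Your proof is correct and follows essentially the same route as the paper's: both evaluate the first integral at solutions to land in the constants, the paper phrasing this as a definable map from $X$ to $\m C$ and you phrasing it as the appearance of a constant $F(a)\notin k^{alg}$ in $k\gen{a}$ for a generic solution $a$. Your version simply supplies the details (the reduction to a rational integral via the preceding lemma, the identification $k\gen{a}=k(a)$, and the genericity argument showing the constant is new) that the paper's one-line argument leaves implicit.
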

\begin{proof}
An \emph{algebraic} first integral $f$ gives a map from the solution set of $X$ to $\m C$ as $f$ is constant on solutions. When $f$ is algebraic, this yields a definable map from $X$ to $\m C$, implying $X$ is nonorthogonal to $\m C$.
\end{proof}

For the remainder of the section we work with more general first integrals, but we will assume the differential equation we work with, $X$, is given by a planar vector field with coefficients in $\m C$.

\begin{thm}
Let $X$ as above be an order two differential equation given by a rational planar vector field over $\m C$. If $X$ has an elementary first integral, then $X$ has an integrating factor of the form: 
$$\Pi (C_i)^{p_i}$$ for polynomials $C_i$ and integers $p_i$.
If $X$ is strongly minimal then all of the $C_i$ must be poles of the vector field. If $X$ is regular and strongly minimal, then $X$ has no elementary first integral. 
\end{thm}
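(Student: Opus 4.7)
The plan is to combine the classical Prelle--Singer theorem on integrating factors for elementary first integrals with the constraints imposed by strong minimality on Darboux polynomials.

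First, I would invoke the theorem of Prelle and Singer: if a rational planar vector field $\tau$ admits an elementary first integral, then it admits an integrating factor $R$ whose logarithmic derivative lies in the $\mathbb{Q}$-linear span of logarithmic derivatives of irreducible polynomials over $\mathbb{C}$, equivalently $R = \prod C_i^{p_i}$ for irreducible polynomials $C_i$ and rational exponents $p_i$. After raising to a suitable power, the exponents can be taken to be integers, yielding the first assertion of the theorem. It is harmless to first clear the common denominator $Q$ of $\tau$, writing $\tau = (1/Q)\tau_0$ with $\tau_0$ polynomial, and work with the polynomial vector field $\tau_0$.

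Next, I would pin down which polynomials $C_i$ can actually occur with a nonzero exponent. An integrating factor satisfies $\tau(R)/R = -\mathrm{div}(\tau)$; applied to $R = \prod C_i^{p_i}$ this gives
\[
\sum_i p_i\,\frac{\tau_0(C_i)}{C_i} \;=\; -Q\cdot \mathrm{div}(\tau) \;\in\; \mathbb{C}(x,y).
\]
Since the $C_i$ are distinct irreducible polynomials, each $C_i$ appearing with $p_i \neq 0$ must divide $\tau_0(C_i)$, i.e.\ must be a Darboux polynomial of $\tau_0$ in the classical sense.

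Now suppose $X$ is strongly minimal. By Remark \ref{SMLienard}, the vector field $\tau$ has no invariant algebraic curves. An irreducible Darboux polynomial $C_i$ of $\tau_0 = Q\tau$ either cuts out an invariant curve of $\tau$ itself (ruled out by strong minimality) or is an irreducible factor of $Q$. Hence every $C_i$ with $p_i \neq 0$ must be a factor of $Q$, i.e.\ a pole of the vector field $\tau$. This gives the second assertion. Finally, if $\tau$ is regular (so we may take $Q = 1$) and strongly minimal, then no $C_i$ is available at all, so $R$ is a constant. But then $\mathrm{div}(\tau) = 0$, which makes the $1$-form $P_2\,dx - P_1\,dy$ closed with polynomial coefficients, yielding a rational first integral of $\tau$. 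Its level sets are infinitely many invariant algebraic curves, contradicting strong minimality. Therefore $X$ has no elementary first integral.

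The main technical obstacle is the second step, namely the clean extraction of the Darboux-polynomial condition on the $C_i$ from the Prelle--Singer integrating factor while handling the rational vector field via rescaling by $Q$, together with the passage from rational to integer exponents; the deep classical input is Prelle--Singer itself, which we take as a black box.
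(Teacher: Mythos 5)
Your proof follows essentially the same route as the paper's, which simply cites Prelle--Singer for the form of the integrating factor and cites Christopher's paper for the fact that each $C_i$ is either a pole of the vector field or cuts out an invariant algebraic curve; you usefully fill in both the Darboux-polynomial extraction and the endgame in the regular case (a constant integrating factor forces $\mathrm{div}(\tau)=0$, hence an exact $1$-form, a rational first integral, and invariant level curves contradicting strong minimality), all of which the paper leaves implicit. One small correction: raising an integrating factor to a power does not produce another integrating factor (if $\tau(R)/R=-\mathrm{div}(\tau)$ then $\tau(R^N)/R^N=-N\,\mathrm{div}(\tau)$), so you cannot force integer exponents that way; Prelle--Singer only guarantees that some power $R^N$ is rational, i.e.\ a priori the $p_i$ are rational. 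This does not damage your argument, since the identification of the $C_i$ appearing with nonzero exponent as Darboux polynomials, and the subsequent use of strong minimality, go through verbatim with rational exponents.
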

\begin{proof}
If the system $X$ has an elementary first integral, results of \cite{prelle1983elementary} show that the integrating factor is of the form 
$$\Pi (C_i)^{p_i}$$ for polynomials $C_i$ and integers $p_i$.\footnote{Technically, \cite{prelle1983elementary} works in the setting of regular vector fields, but an easy argument shows that the results apply to rational vector fields as well; see page 8 of \cite{duarte2009finding}.}

It follows that if the $C_i$ are not poles of the vector field, then the system has nontrivial invariant algebraic curves (an explanation of this can be found in various place, e.g. the second page of \cite{christopher1999liouvillian} following the statement of the main theorem). Strongly minimal systems have no invariant curves, and regular systems have no poles.
\end{proof}

The connection between Liouvillian first integrals and strong minimality is more subtle, but we can say something about the form of the integrating factor:

\begin{thm} \label{Liouv}
If $X$ is a strongly minimal planar vector field with coefficients in $\m C$, then $X$ has a Liouvillian first integral if and only if $X$ has an integrating factor of the form $\Pi (C_i)^{p_i} e^{C/D}$ for polynomials $C_i, D$ which are poles of the vector field and $C$ a polynomial. If $X$ is a strongly minimal regular vector field, then if $X$ has a Liouvillian first integral, it has an integrating factor of the form $e^C$. 
\end{thm}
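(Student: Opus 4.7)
The plan is to combine the Singer--Christopher classification of Liouvillian first integrals for planar polynomial vector fields with the geometric content of strong minimality. Recall Singer's theorem: a planar polynomial vector field $X$ admits a Liouvillian first integral if and only if it has an integrating factor $R$ whose logarithmic derivative $dR/R$ is a closed rational $1$-form on $\mathbb{A}^2$. Christopher's refinement further decomposes such a closed $1$-form as $\sum p_i\, dC_i/C_i + d(C/D)$ for polynomials $C_i, C, D$ and integers $p_i$, so $R$ itself has the shape $\prod(C_i)^{p_i}\,e^{C/D}$. The direction $(\Leftarrow)$ is then immediate: given an integrating factor $R$ of this form, the $1$-form $R \cdot (P\,dy - Q\,dx)$ (where $X = P\partial_x + Q\partial_y$) is closed, locally exact, and its local primitive is Liouvillian over $\mathbb{C}(x,y)$ because $R$ is built out of algebraic and exponential extensions.

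For the forward direction, start from a Liouvillian first integral and produce, via Singer--Christopher, an integrating factor $R = \prod(C_i)^{p_i}\,e^{C/D}$ with $C_i$ irreducible and relatively prime to $D$. The defining identity $\tau_X(R)/R = -\operatorname{div}(X)$ expands as
\[
\sum_i p_i\,\frac{\tau_X(C_i)}{C_i} + \tau_X\!\left(\frac{C}{D}\right) = -\operatorname{div}(X),
\]
where $\tau_X = P\,\partial_x + Q\,\partial_y$. The right-hand side is a rational function whose polar locus is contained in the polar locus of $X$. Comparing poles along $\{C_i = 0\}$ forces $C_i \mid \tau_X(C_i)$, i.e., each $C_i$ is a Darboux polynomial of $X$; the same analysis applied to $\tau_X(C/D)$ along $\{D = 0\}$ forces $D$ to be a Darboux polynomial as well. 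In other words, each of the curves $\{C_i = 0\}$ and $\{D = 0\}$ is $X$-invariant.

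Now we apply strong minimality. By Remark \ref{SMLienard}, a strongly minimal rational planar vector field cannot have any invariant algebraic curve in the locus where $X$ is regular: an invariant curve would yield a solution $y$ with $\operatorname{trdeg}_\mathbb{C}\mathbb{C}\langle y\rangle = 1$, contradicting the dichotomy imposed by strong minimality (which allows only $\operatorname{trdeg} = 0$ or $2$). Consequently, the only way for a Darboux polynomial to survive is for its zero set to be contained in the polar locus of $X$, i.e., for the corresponding factor to be a pole of the vector field. This yields exactly the desired form $\prod(C_i)^{p_i}\,e^{C/D}$ with all $C_i$ and $D$ poles of $X$.

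For the final claim, assume further that $X$ is regular, so $X$ has no poles at all. Then there are \emph{no} nontrivial Darboux polynomials available, forcing the product $\prod(C_i)^{p_i}$ to be trivial and $D$ to be a nonzero constant. Absorbing $D$ into $C$, the integrating factor reduces to $e^C$ for some polynomial $C$, as claimed. The main obstacle in carrying this out rigorously is the pole-matching step in the second paragraph: one must verify, via a careful residue calculation at each component, that no cancellation can occur between the logarithmic term $p_i\,\tau_X(C_i)/C_i$ and the rational piece $\tau_X(C/D)$, so that the Darboux conclusion for each $C_i$ and for $D$ genuinely follows; this is where the assumption that the factors are irreducible and relatively prime is essential.
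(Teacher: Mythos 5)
Your proposal is correct and follows essentially the same route as the paper: invoke Singer's theorem and Christopher's refinement to obtain an integrating factor of Darboux--exponential shape, observe that each $C_i$ and $D$ would define an invariant algebraic curve, and use strong minimality (which forbids invariant curves away from the polar locus, per Remark \ref{SMLienard}) to force these factors to be poles of the vector field, with the regular case following because there are then no poles available. The only differences are that you spell out the pole-matching computation showing the $C_i$ and $D$ are Darboux polynomials and you treat the converse direction explicitly, both of which the paper leaves implicit in its citations.
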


\begin{proof}
By results of Singer \cite{MR1062869} and Christopher \cite[Theorem 2]{christopher1999liouvillian}, if there is a Liouvillian first integral of $X$, then there is an integrating factor of the form: 
$$e^{C/D} \cdot \Pi (C_i)^{p_i}$$ 
where $C,D, C_i$ are polynomial functions of the two variables of the system. Their proofs take place in the regular setting, but can be adapted to rational vector fields; see \cite{duarte2009finding}. The zero sets of the $C_i$ and the zero set of $D$ give invariant algebraic curves for the vector field as long as they are not poles of the vector field $X$, contradicting strong minimality. 
\end{proof}

We now describe two examples. The first ones shows that Liouvillian integrability does not in general imply the existence of invariant algebraic curves. 

\begin{exam}
Consider the system
\begin{equation} \label{demi}
\begin{array}{r@{}l}
x' &{} =  1 \\ 
y' &{} =  xy + \alpha
\end{array}
\end{equation}
where $\alpha \neq 0$. The system has integrating factor $e^{\frac{-x^2}{2}}$, so the system has a Liouvillian first integral, but no invariant algebraic curve.
\end{exam}

Notice that the system \ref{demi} is not strongly minimal and that more precisely the solutions of this system are all Liouvillian. On the other hand, Rosenlicht constructed examples of order two equations having a Liouvillian first integral but no nonconstant Liouvillian solution \cite[introduction]{rosenlicht1969explicit} \cite[Proposition 3]{MR3563433}. Our second example shows that there exist order two equations having a Liouvillian first integral but \emph{no Pfaffian solution}.  

\begin{exam}
Consider the vector field associated with the Poizat equation which originally motivated our work: 
\begin{equation} \label{poi1}
\begin{array}{r@{}l}
x' &{} =  y \\ 
y' &{} =  y/x
\end{array}
\end{equation}
Note that the first integrals of the system are unaffected by multiplying both rational functions by $x$ to clear the denominator of the second equation. One then obtains the system: 
\begin{equation} \label{poi2}
\begin{array}{r@{}l}
x' &{} =  xy \\ 
y' &{} =  y
\end{array}
\end{equation}
It is easy to check that the function $H(x,y) = \frac{e^y}{x}$ is a first integral of this second system (hence of the first one too), which has two invariant curves given by $x=0$ and $y=0$. It is also easy to see that the generic solution of system \ref{poi2} is not strongly minimal or orthogonal to the constants (it is 2-step analyzable in the constants and has a Liouvillian generic solution), while the generic solution of system \ref{poi1} is strongly minimal by the arguments of the previous section. So system \ref{poi1} is a system with a Liouvillian first integral but no Pfaffian solution.
\end{exam}
Furthermore, this example illustrates the following observation of independent interest: transformations which scale both coordinates of the vector field by some polynomial
\begin{itemize} 
\item preserve first integrals,
\item do not preserve the model theoretic notions we study (e.g. strong minimality, orthogonality to the constants), 
\item do not preserve the property of the system having Liouvillian solutions. 
\end{itemize} 

The examples given above also show that the Theorem \ref{Liouv} can not be improved to give a direct connection between strong minimality and the existence of Liouvillian first integrals, at least not in complete generality. However, in the case that one can rule out an exponential integrating factor by some other argument, one can use strong minimality to show that no Liouvillian first integral exists. For instance, an argument ruling out exponential integrating factors in the case of certain Li\'enard equations is contained in \cite[Section 2]{MR3563433}.

\subsection{Overview of previous results for Li\'enard equations} \label{prevlie}

Equation \ref{Lie} is equivalently expressed by the vector field on $
\m A ^2$: 
\begin{equation} \label{Lie1}
\begin{array}{r@{}l}
   x' &{}= y \\
   y' &{}= -f(x) y - g(x) 
\end{array}
\end{equation} 

The study of algebraic solutions of Equation \ref{Lie1} seems to begin with Odani \cite{ODANI1995146}, who shows that Equation \ref{Lie1} has no invariant algebraic curves when $f,g \neq 0$, $\deg (f) \geq \deg (g)$ and $g /f$ is nonconstant. Numerous authors attempted to generalize Odani's results on invariant curves \cite{MR1433130, MR2430656}. Many recent works utilize the results of Odani and generalizations to characterize Liouvillian first integrals of Li\'enard equations in various special cases \cite{10.2307/20764280, LlibreValls+2013+825+835, Cheze2021, MR3573730, MR3808495, MR4190110, demina2021method}. Many of the special cases considered make assumptions about the degrees of $f(x), g(x)$ in equation \ref{Lie1}, while others make detailed assumptions not unlike the criteria employed by Brestovski \cite{brestovski1989algebraic}. Demina \cite{demina2021integrability} has recently completely classified the systems \ref{Lie1} which have Liouvillian first integrals for polynomial $f,g$. 

Explicit exact solutions (all Liouvillian) for the Equation \ref{Lie1} in very special cases are the subject of many additional papers in the literature \cite{feng2001algebraic, feng2002explicit, feng2004exact, harko2014class, KONG1995301}. Our results in the next subsection show in numerous wide-ranging cases Equation \ref{Lie1} has no Liouvillian solutions, so formulas for explicit exact solutions such as those of \cite{feng2001algebraic, feng2002explicit, feng2004exact, harko2014class, KONG1995301} do not exist.

Numerous other order two systems of differential equations can be transformed analytically or algebraically to solutions of a system in the form of Equation \ref{Lie1}. In most cases, it is apparent that the transformations preserve the property of being Liouvillian. For instance, this applies to the transformations in Propositions 2 and 3 of \cite{gine2011weierstrass}. There it is shown that the solutions of the system \begin{eqnarray*}
x' & = & f_0(x) - f_1 (x) y , \\ 
y' & = & g_0 ( x) + g_1 (x) y + g_2 (x ) y^n 
\end{eqnarray*} 
can be transformed to solutions of the Li\'enard family \ref{Lie} by means of the transformation $$Y = (f_0(x) - f_1(x) y ) e^{ \int_ {0} ^ x \left( g_2 (\tau ) - f_1' (\tau )/ f_1(\tau ) \right) d \tau }.$$ It is easy to see that when the functions appearing in the system are Liouvillian, this analytic transformation preserves the property of solutions being Liouvillian. Similar more complicated analytic transformations have been developed for various particular order two systems of higher degree (e.g. Proposition 3 of \cite{gine2011weierstrass}). There are numerous additional works showing particular systems can be transformed into equations of Li\'enard form (see e.g. \cite{transformLien} or the references of \cite{gine2011weierstrass}). 

\subsection{Solutions of Li\'enard type equations} \label{nonintLie}

\begin{thm} \label{nonlou} \cite[Theorem C]{jaoui2020corps} Let $k$ be a countable field of characteristic $0$, let $S$ be a smooth irreducible algebraic variety over $k$ and let $\pi: (\mathcal X,v) \rightarrow (S,0)$ be a smooth family of autonomous differential equations indexed by $S$ defined over $k$.   Assume that all the fibres of $\pi$ are absolutely irreducible and that
\begin{center} 
$(O):$ for some $s_0 \in S(k)$,  the generic type of the fibre $(\mathcal X,v)_{s_0}:= \pi^{-1}(s_0)$ is orthogonal to the constants.
\end{center}
Then for some/any realization $s \in S(\mathbb{C})$ of the generic type of $S$  over $k$, the generic type of $(\mc X,v)_s$ is also orthogonal to the constants.
\end{thm}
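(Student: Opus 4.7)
The plan is a contradiction argument. Suppose the generic type $p_s$ of $(\mc X, v)_s$ at $s \in S(\mathbb{C})$ generic over $k$ fails to be orthogonal to $\mathbb C$; I will deduce non-orthogonality at $s_0$, contradicting the hypothesis $(O)$. Note that since $s$ specializes to $s_0$ and $s_0 \in S(k)$, this amounts to showing that the non-orthogonality locus is closed under specialization from the generic point of $S$ to $s_0$.

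First I would extract a geometric witness of non-orthogonality at generic $s$. By definition, non-orthogonality yields, after base change to some $k(s,\bar c) \supseteq k(s)$ with $\bar c$ a tuple of constants independent from $s$, an extra algebraic dependence between (the derivatives of) a generic solution $y$ of the $s$-fiber and $\bar c$. Model-theoretically this produces a $k$-definable differential-algebraic correspondence $R \subseteq \mc X \times_S \mathbb A^m_S$ whose fiber $R_s$ is, for generic $s \in S$, a finite-to-finite correspondence between $(\mc X, v)_s$ and a positive-dimensional algebraic subvariety $V_s \subseteq \mathbb C^m$ of a trivial constant family over $S$. Next I would spread $R$ out over $S$ and try to specialize the witness to $s_0$. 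The defining equations of $R$ involve $s$ algebraically, so they extend over a nonempty Zariski-open $U \subseteq S$; the technical point is that we have no a priori reason to believe $s_0 \in U$. The remedy is to embed the datum of $R$ in a proper moduli space over $S$ (for instance, a Hilbert or Chow scheme parameterizing subschemes of $\mc X \times_S \mathbb A^m_S$ of bounded degree), so that the section defined over the generic point of $S$ extends, by the valuative criterion of properness applied to the specialization $s \leadsto s_0$, to a limit witness $R_0$ at $s_0$.

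The main obstacle, and the crux of the argument, is then to verify that this limit $R_0$ still witnesses non-orthogonality of $p_{s_0}$, rather than degenerating into a trivial correspondence — for example by collapsing its projection onto $\mathbb C^m$ to a point, by losing Kolchin dimension, or by having its good component disappear into an embedded component. Controlling this requires a careful tracking of numerical invariants under specialization: Kolchin dimensions of $R$ and $V_s$, degrees of the two projections, and flatness of the family of correspondences, together with the smoothness of $\pi$. A natural strategy is to first reduce to a \emph{minimal} witness (say one minimizing the degree of the projection $R_s \to V_s$), so that any degeneration at $s_0$ would contradict minimality at the generic point. Granting such numerical stability, the limit $R_0$ remains a nontrivial finite-to-finite correspondence between $(\mc X, v)_{s_0}$ and a positive-dimensional variety of constants, contradicting $(O)$ and completing the argument.
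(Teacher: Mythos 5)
First, a point of order: the paper does not actually prove this statement. It is imported verbatim as Theorem C of \cite{jaoui2020corps}, so there is no in-paper proof to compare yours against; I can only assess your proposal on its own terms and against the argument of the cited source.

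Your contrapositive-plus-specialization strategy has the right overall shape, but it has a genuine gap at precisely the point you identify as the crux, and a smaller one earlier. The smaller one: nonorthogonality of $p_s$ to the constants is a priori witnessed only after passing to an arbitrary differential field extension of $k(s)$, and the witness is in general a dominant invariant rational map (a first integral) from some finite \emph{power} $(\mc X, v)_s^n$ onto a positive-dimensional constant variety, not a finite-to-finite correspondence on the fiber itself; bringing this witness down to $k(s)^{alg}$, so that it spreads out into a $k$-definable family over $S$, requires the non-weak-orthogonality statement for powers of autonomous types, which you assert in a single sentence. The serious gap is the degeneration step. Your ``minimal witness'' device does not do the job: minimizing the degree of $R_s \to V_s$ is a condition at the generic point and says nothing about the flat limit at $s_0$; the limit of the graphs of nonconstant first integrals $f_s$ can perfectly well be $X_{s_0} \times \{c\}$ (the $f_s$ converge to a constant function), and no normalization of the generic witness rules this out by itself. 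The way the cited source closes this gap is to replace the witness by one that is robust under specialization: by the Darboux--Jouanolou theorem, $(\mc X,v)_s^n$ admits a nonconstant rational first integral if and only if it admits infinitely many invariant algebraic hypersurfaces of some bounded degree $d$. The incidence variety of invariant hypersurfaces of degree at most $d$ sits inside $S \times \mathbb{P}^N$ and is therefore \emph{proper} over $S$, so its image in $S$ is closed and the dimension of its fibres is upper semicontinuous; hence ``infinitely many invariant hypersurfaces of degree $\le d$ on the $n$-th power'' is a Zariski-closed condition on $s$, defined over $k$. A $k$-closed subset of $S$ containing the generic point is all of $S$, hence contains $s_0$, and Darboux--Jouanolou then reconstructs a nonconstant first integral over $k(s_0)^{alg}$, contradicting $(O)$. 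Without some such replacement of the witness --- a family whose existence is genuinely closed, rather than a single object whose limit can collapse --- your argument does not close.
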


By Theorem \ref{fact} and the conclusion of Theorem \ref{nonlou}, when condition $(O)$ holds and the system $(\mc X,v)_s$ is two-dimensional, the system $(\mc X,v)_s$ has only finitely many Liouvillian solutions. Note that because the theorem only says that the \emph{generic type} is orthogonal to the constants, there might be finitely many other types of order one coming from the finitely many algebraic invariant curves.

We fix $k$ a countable field of characteristic $0$ (for example,  $k = \mathbb{Q}$). Set $S = \mathbb{A}^p$ the affine space of dimension $n$.  By an \emph{$k$-algebraic family of rational functions indexed by $S$},  we mean a rational function $g(s,z) \in k(S)(z)$. 

\begin{lem} 
Let $f(s,z) \in k(S)(z)$. There is a dense open set $S_0 \subset S$ such that $f(s,z) \in \mathbb{C}[S_0](z)$.
\end{lem}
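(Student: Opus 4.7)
The statement reduces to a routine unpacking of the definition of $k(S)(z)$, so the plan is short: I would simply make explicit what "dense open $S_0$" should be.

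First, I would observe that since $k(S)(z)$ is the fraction field of $k(S)[z]$, any $f(s,z) \in k(S)(z)$ can be written as $f(s,z) = P(s,z)/Q(s,z)$ with $P(s,z), Q(s,z) \in k(S)[z]$ and $Q \neq 0$. Choose such a representation and expand
\[
P(s,z) = \sum_{i=0}^{d_P} p_i(s)\, z^i, \qquad Q(s,z) = \sum_{j=0}^{d_Q} q_j(s)\, z^j,
\]
with $p_i, q_j \in k(S) = k(s_1,\ldots,s_p)$. This is a \emph{finite} collection of rational functions on $S$.

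Next I would invoke the standard fact that a rational function on an irreducible affine variety is regular on the complement of the zero locus of any denominator. Concretely, writing each $p_i = a_i/b_i$ and $q_j = c_j/d_j$ with $a_i,b_i,c_j,d_j \in k[s_1,\ldots,s_p]$ and $b_i, d_j \neq 0$, I would take
\[
S_0 \;=\; S \setminus V\Bigl(\prod_i b_i(s) \cdot \prod_j d_j(s)\Bigr).
\]
This is a dense Zariski-open subset of $S = \mathbb{A}^p$ (being the complement of a proper closed subvariety), and by construction every $p_i$ and every $q_j$ lies in the ring $\mathbb{C}[S_0]$ of regular functions on $S_0$ (through the fixed embedding $k \hookrightarrow \mathbb{C}$). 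Therefore $P(s,z), Q(s,z) \in \mathbb{C}[S_0][z]$ and hence $f = P/Q \in \mathbb{C}[S_0](z)$, as required.

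There is no real obstacle here; the only thing to be careful about is the meaning of $\mathbb{C}[S_0](z)$, which I interpret as the localization of $\mathbb{C}[S_0][z]$ at its nonzero elements (equivalently, the subring of $\mathbb{C}(S)(z)$ consisting of fractions whose numerator and denominator are polynomials in $z$ with coefficients regular on $S_0$). The argument does not require $Q$ to be nonvanishing pointwise on $S_0 \times \mathbb{A}^1$, only that the coefficients $p_i, q_j$ be regular on $S_0$, which is precisely what the construction of $S_0$ guarantees.
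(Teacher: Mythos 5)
Your proof is correct and follows essentially the same route as the paper: write $f$ as a quotient of polynomials in $z$ with coefficients in $k(S)$, and delete the (finitely many) pole divisors of those coefficients to obtain $S_0$. The paper additionally normalizes the denominator to have constant term $1$ (useful later when it needs to track the poles of $f(s,\cdot)$ fiberwise), but for the lemma itself this is immaterial.
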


\begin{proof}
Write $$f(s,z) = \frac {g(s,z)}{h(s,z)} = \frac {\sum a_i(s) z^i}{\sum_{i \geq 1} b_i(s) z^i + 1}$$
where the $a_i's$ and the $b_i's$ are in $k(S)$. Denote by $Z$ the proper closed subset of $S$ obtained as the finite union of the poles of the $a_i$'s and the $b_i$'s and set $S_0 = S \setminus Z$.
\end{proof}

\begin{cor} \label{orthcor}
Let $k$ be a countable field of characteristic $0$,  let $g(s,z) \in k(S)(z)$ be a $k$-algebraic family of rational functions indexed by $S = \mathbb{A}^p$ and let $f(z) \in k[z]$ be a rational function with at least one non-zero residue.  Assume that 
\begin{center}
for some $s_0 \in S(k)$, the rational function $g(s_0,z)$ is identically  equal to $0$.
\end{center} 
Then for every realization $s \in S(\mathbb{C})$ of the generic type of $S$ over $k$ , the generic type of 
$$ y'' + y'f(y) + g(s, y) = 0.$$
is orthogonal to the constants. 
\end{cor}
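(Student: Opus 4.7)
The plan is to reduce to Theorem \ref{nonlou} with the chosen $s_0$ serving as base point. Thus the main task is to verify hypothesis $(O)$ at $s_0$: that the generic type of the fibre there is orthogonal to the constants. Once that is done the conclusion is immediate.

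At $s_0$ the equation specializes to $y'' + y'f(y) = 0$. On the open set where $y' \neq 0$, dividing by $y'$ gives exactly the equation $(\star)$ from Section \ref{strminsection} with $f$ replaced by $-f$. A rational function in $\mathbb{C}(z)$ is the derivative of some $g \in \mathbb{C}(z)$ if and only if all of its residues vanish, since a logarithmic term $c/(z-a)$ in a partial fraction expansion admits a rational antiderivative only when $c = 0$. By hypothesis $f$ has a nonzero residue, hence so does $-f$, so $-f$ is not the derivative of any rational function. Theorem \ref{stminthm} then gives that $\{\, y'' + y'f(y) = 0, \; y' \neq 0 \,\}$ is strongly minimal. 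Being autonomous and of order $2$, Fact \ref{autonomous}(1) applies, and its generic type (which is also the generic type of the full equation at $s_0$) is orthogonal to the constants.

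To invoke Theorem \ref{nonlou}, set up the family of planar vector fields
\[
v_s \;=\; y\,\frac{\partial}{\partial x} \;+\; \bigl(-y f(x) - g(s,x)\bigr)\,\frac{\partial}{\partial y}
\]
over the dense open $S_0 \subset S$ produced by the preceding lemma, on which $g(s,z)$ is regular in $s$. After removing the poles of $f$ and of $g(s, \cdot)$ from $\mathbb{A}^2 \times S_0$, one obtains a smooth family $\pi: (\mathcal{X}, v) \to S_0$ of autonomous rational vector fields whose fibres are dense open subvarieties of $\mathbb{A}^2$, and hence absolutely irreducible. The base point $s_0$ lies in $S_0(k)$ because $g(s_0, \cdot) \equiv 0$ has no pole; condition $(O)$ holds at $s_0$ by the previous paragraph. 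Theorem \ref{nonlou} then yields that for any $s \in S(\mathbb{C})$ realizing the generic type of $S$ over $k$, the generic type of $(\mathcal{X}, v)_s$—which is the generic type of $y'' + y'f(y) + g(s,y) = 0$—is orthogonal to the constants.

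The real content, and the only genuinely nontrivial step, is the reduction at $s_0$ to a Poizat equation together with the residue criterion for not being a derivative; the remaining work is bookkeeping to fit the family into the hypotheses of the specialization theorem, and the application of Fact \ref{autonomous}(1) to promote strong minimality to orthogonality.
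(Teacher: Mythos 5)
Your proposal is correct and follows essentially the same route as the paper: verify hypothesis $(O)$ at $s_0$ via the strong minimality of the Poizat equation from Theorem \ref{stminthm} (promoted to orthogonality to the constants because the equation is autonomous of order $2$), then package the family as a smooth morphism of $D$-varieties over the open set $S_0$ and invoke Theorem \ref{nonlou}. Your explicit handling of the sign ($f$ versus $-f$) and of the residue criterion for not being a rational derivative is a welcome bit of care that the paper leaves implicit.
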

Notice that the conclusion is \emph{equivalent} to: the property 
\begin{center}
$O(s)$: the generic type of $ y'' + y'f(y) + g(s, y) = 0$ is orthogonal to the constants
\end{center}
holds on a set of full Lebesgue measure of the parameter space $S(\mathbb{C})$.

\begin{proof}
Without loss of generality,  we can replace $S$ by an open set $S_0$ such that $g \in \mathbb{C}[S_0](z)$:  since $S$ is irreducible,  so is $S_0$ and $s_0 \in S_0$.  Denote by $(z,z')$ the standard coordinates on $\mathbb{A}^2$,  $S$ the (finite) set of poles of $f(z)$ and by $U \subset \mathbb{A}^2$ the Zariski open set defined by
$$U = \mathbb{A}^2 \setminus (S \times \mathbb{A}^1)$$

Consider $\pi: \mathcal X = U \times S_0 \rightarrow S_0$ which is obviously smooth and with the notation of the previous lemma consider the closed subset $Z$ of $\mathcal X$ defined by: 
$$ 1 +  \sum_{i \geq 1} b_i(s)z^i = 0$$
describing the set of poles of $g(s,z)$ when $s$ varies in $S_0$.  Since the restriction of a smooth morphism is smooth,  the restriction of $\pi$ to the open set $\mathcal X_0 = \mathcal X \setminus Z$ 
$$\pi_0: \mathcal X_0 \rightarrow S_0.$$
 is also smooth. Moreover,  the fibres of $\pi_0$ are absolutely irreducible since the fibres of $\pi$ are absolutely irreducible and a dense open set of an absolutely irreducible variety is also absolutely irreducible. 
 
 Consider the vector field on $\mathcal{X}_0$ given in the coordinates $(z,z',s)$ by 
 $$ v(z,z',s) = z' \frac \partial {\partial z} + \Big(- z'f(z) - g(z,s)\Big) \frac \partial {\partial z'} + 0 \frac \partial {\partial s_1} + \ldots + 0 \frac \partial {\partial s_p} .$$
By definition, the vector field $v$ is tangent to the fibres of $\pi_0$ so that
$$\pi_0: (\mathcal X_0,v) \rightarrow (S_0,0)$$
is a morphism of $D$-varieties and it satisfies the ``geometric'' assumptions of Theorem C by the discussion above. 

\begin{claim} \label{theclaim1}
Let $s \in S_0(\mathbb{C})$ and denote by $(\mathcal X_0,v)_{s}:= \pi^{-1}(s)$.  There is a $k(s)$-definable bijection between $(\mathcal X_0,v)^\delta_{s}$ and the solution set of $y'' + y'f(y) + g(y,s) = 0.$ In particular, the generic type of one is interdefinable over $k(s)$ with the generic type of the other.
\end{claim}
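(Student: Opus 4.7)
The plan is to unwind both sides of the proposed bijection as explicit subsets of $\mathcal{U}^2$ and $\mathcal{U}$ respectively, and then show they are the graph and source of the $k(s)$-definable map $(a,b) \mapsto a$ (with inverse $y \mapsto (y, \delta y)$). Since everything is built by hand from the vector field $v$, the claim is essentially a translation of notation, and the main thing to verify is that the domain/codomain restrictions (coming from excising the poles of $f$ and $g(\cdot,s)$) match up correctly.

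Concretely, I would first unpack the definition of $(\mathcal{X}_0,v)_s^\delta$. Fix $s\in S_0(\mathbb{C})$; since $\delta s_i = 0$ for all $i$ (the last $p$ components of $v$ are zero and $s\in\mathbb{C}$), a point of $(\mathcal{X}_0,v)_s^\delta$ is precisely a pair $(a,b)\in \mathcal{U}^2$ such that $(a,b,s)\in\mathcal{X}_0$ and
\[
\delta a = b,\qquad \delta b = -b\,f(a) - g(a,s).
\]
The inclusion $(a,b,s)\in\mathcal{X}_0$ says that $a$ is not a pole of $f$ (the condition coming from $U$) and that $1+\sum_{i\geq 1} b_i(s)a^i \neq 0$ (the condition $(a,b,s)\notin Z$), i.e.\ $a$ is not a pole of the rational function $g(\cdot,s)$.

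Next I define the candidate bijection $\Phi(a,b) = a$. If $(a,b)\in(\mathcal{X}_0,v)_s^\delta$, then
\[
\delta^2 a = \delta b = -(\delta a)f(a) - g(a,s),
\]
so $a$ satisfies $y'' + y'f(y) + g(y,s) = 0$, and $a$ lies outside the poles of $f$ and of $g(\cdot,s)$ so that the equation is meaningful at $a$. Conversely, define $\Psi(y) = (y,\delta y)$. If $y$ is a solution of $y'' + y'f(y) + g(y,s)=0$ in the natural domain (avoiding the poles of $f$ and $g(\cdot,s)$), then setting $b:=\delta y$ one reads off directly that $\delta b = -bf(y) - g(y,s)$ and $(y,b,s)\in\mathcal{X}_0$, so $\Psi(y)\in(\mathcal{X}_0,v)_s^\delta$. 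Clearly $\Phi\circ\Psi = \mathrm{id}$ and $\Psi\circ\Phi = \mathrm{id}$.

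Finally I note that both maps are quantifier-free $\mathcal{L}_\delta$-definable with parameters in $k(s)$: $\Phi$ is literally a coordinate projection, and $\Psi$ uses only the symbol $\delta$ and the coefficients of $f$ and $g(\cdot,s)$, which live in $k(s)$. Hence $\Phi$ is a $k(s)$-definable bijection, and it therefore induces a bijection between complete types over $k(s)$; in particular it matches the generic type of $(\mathcal{X}_0,v)_s^\delta$ with the generic type of the solution set of $y'' + y'f(y) + g(y,s)=0$. There is no real obstacle here—the only point requiring any care is the bookkeeping of the open conditions defining $\mathcal{X}_0$, to confirm that they correspond exactly to the natural requirement that $y$ avoid the poles of $f$ and $g(\cdot,s)$ so that the Liénard equation makes sense at $y$.
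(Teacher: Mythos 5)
Your proposal is correct and is exactly the paper's argument: the paper dispatches this claim in one line as ``the standard correspondence between $D$-varieties and differential equations,'' given by the projection $(z,z')\mapsto z$, which is precisely your $\Phi$ with inverse $y\mapsto(y,\delta y)$. You simply write out the routine verifications (sharp points of the fiber, matching of the pole conditions defining $\mathcal X_0$, $k(s)$-definability) that the paper leaves implicit.
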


Indeed, this is the standard correspondence between $D$-varieties and differential equations: the definable bijection is given by:
$$(z,z') \mapsto z$$

For $s_0 \in S_0$, we have shown that the definable set $y'' + y'f(y) = 0$ has Morley rank $1$ (and Morley degree $2$).  Hence the generic type of this equation --- the unique type $p \in S(k)$ of maximal order living on the solution set of this equation --- is a strongly minimal type of order $2$, hence orthogonal to the constants.  The claim above shows that $s = s_0$ satisfies the property $(O)$. 

By Theorem 1.1,  we conclude that for generic $s \in S_0(\mathbb{C})$ (equivalently, for generic $s \in S(\mathbb{C})$) the generic type of $(\mathcal X_0,v)_s$ is orthogonal to the constants.  Hence using the claim above in the other direction, we obtain that the generic type of 
$$y'' + y'f(y) + g(s,y) = 0$$
for generic values of $s \in S(\mathbb{C})$ over $k$.
\end{proof}

\begin{exam}
Let $a_0,\ldots, a_n,b_0,\ldots b_n \in \mathbb{C}$ be $\mathbb{Q}$-algebraically independent. Then the generic type of
\begin{equation} \label{ex1} y'' + \frac{y'}{y} + \frac{a_ny^n + a_{n-1}y^{n-1} + \ldots + a_0}{b_my^m + b_{m-1}y^{m-1} + \ldots + b_0} = 0 \end{equation}
is orthogonal to the constants. By Fact \ref{fact}, the generic solutions of this equation are not Liouvillian and more precisely, this equation has at most finitely many nonconstant Liouvillian solutions which are all supported by algebraic invariant curves of the equation.  
\end{exam}

\begin{exam}
Let $a \notin \mathbb{Q}^{alg}$ be a transcendental number and $g(y) \in \mathbb{Q}(y)$ arbitrary.  The generic type of
\begin{equation} \label{ex2} y'' + \frac{y'}{y} + ag(y)= 0 \end{equation}
is orthogonal to the constants.  By Fact \ref{fact}, the generic solutions of this equation are not Liouvillian and more precisely, this equation has at most finitely many nonconstant Liouvillian solutions which are all supported by algebraic invariant curves of the equation. 
\end{exam}

\begin{rem}
Systems satisfying condition $(O)$ from Theorem \ref{nonlou} yield wide classes of examples generalizing Equations \ref{ex1} \ref{ex2}. For instance, one can replace $\frac{1}{y}$, the coefficient of $y'$ in Equations \ref{ex1} or \ref{ex2}, by any rational function $h(y)$ which has no rational antiderivative while drawing the same conclusions. By Corollary \ref{orthcor} and  Claim \ref{theclaim1}, one can replace $ag(y)$ in Equation \ref{ex2} by $g(a,y)$ where $g(s,y)$ is a $k$-algebraic family of rational functions indexed by $\m A^p$ and $a \in \m C^p$ is a point such that for some $k$-specialization $a_0$ of $a$, $g(a_0,y)=0$. 
\end{rem}

\section{Algebraic relations between solutions and orthogonality in the strongly minimal case} \label{formssection}

Let $x_1, \ldots , x_n $ be solutions of Equation (\ref{stareqn}). Since Equation (\ref{stareqn}) is strongly minimal by Theorem \ref{stminthm} and has constant coefficients, by Fact 5.7 and Proposition 5.8 of \cite{casale2020ax}, if $x_1, \ldots , x_n$ are not independent over some differential field $k$ extending $\m C$, then there is a {\em differential} polynomial in two variables (of order zero or one) with coefficients in $\m C$ such that $p(x_i, x_j,x_j')= 0.$\footnote{Note here we are already using strong minimality and triviality to deduce that the relation witnessing non-independence involves the derivative of only one of the solutions.} In this section, we go farther, showing that in our case $p$ can be taken to be a \emph{polynomial} relation between $x_i$ and $x_j$ not involving any derivative. Then in the following section, we give a precise characterization of what the possible polynomial relations between solutions are in terms of basic invariants of the rational function appearing in Equation (\ref{stareqn}) (e.g. singularities, residues).
 
\subsection{Differential forms} \label{5.1}
 
We give some background on differential forms as this will be used heavily in this section.  A general reference on the subject is \cite[Chapter 5]{lang1999fundamentals} in the context of real differential geometry. Recall that throughout, $\mathcal{U}$ is a saturated model of $DCF_0$ with constants $\m C$.
 
 Let $V$ be an irreducible (affine) variety over $\m C$ and let $F=\m C(V)$ be its function field. We identify $\text{Der}(F/\m C)$ with the vector space of rational vector fields of $V(\m C)$, that is a derivation $D\in \text{Der}(F/\m C)$ corresponds to a rational map $$V(\m C)\xrightarrow{X_D} TV(\m C).$$

We let $\Omega^1_V=\Omega^1(F/\m C)$ be the space of rational differential 1-forms on $V(\m C)$ endowed with the universal derivation
$$d: F \rightarrow \Omega^1_V.$$ 
For every derivation $D \in Der(F/\mathbb{C})$,  there exists a unique linear map $D^\ast: \Omega^1_V \rightarrow F$ such that $D^\ast \circ d = D$.  In particular,  the $F$-vector spaces $\text{Der}(F/\m C)$ and $\Omega^1_V$ are dual to each other. It is well known (see \cite[Chapter 2,  Section 8]{Hartshorne}) that any transcendence basis $\xi_1,\ldots,\xi_r$ of $F$ over $\mathbb{C}$ gives rise to a $F$-basis $d\xi_1,\ldots, d\xi_r$ of $\Omega^1_V$
 so that $$dim(V) = ldim_F(\Omega^1_F).$$
In particular,  notice that if  $v=(v_1,\ldots,v_n)$ is a generic point of $V(\mathcal{U})$  then $F=\m C(v)$ and $\{dv_1,\ldots,dv_n\}$ includes a basis for $\Omega_V$.

 For each $n\in\m N$ we define $\Omega^n_V$, the space of rational differential n-forms, to be the exterior algebra $\bigwedge^n\Omega^1_V$. It is the $F$-vector space of all alternating $n$-multilinear maps $$\omega: \text{Der}(F/\m C)^n \rightarrow F.$$  
As usual,  $\Omega^n_V = \{0\}$ for $n > dim(V)$ and otherwise $ldim_F(\Omega^n_V) = {{dim(V)} \choose {n}}$.  In particular, $\Omega^{dim(V)}_V$ is an $F$-vector space of dimension one and an element $\omega \in \Omega^{dim(V)}_V$ will be called a (rational) \emph{volume form} on $V$. 

The finite dimensional $F$-vector space 
$$\Omega^{\bullet}_V = F \oplus \Omega^1_V \oplus \ldots \oplus \Omega^{dim(V)}_V$$
is endowed with the structure of an anticommutative graded $F$-algebra given by the wedge product characterized by the two properties: 
\begin{itemize}
\item[(i)] $\wedge$ is $F$-bilinear. 

\item[(ii)] for every $1$-forms $\omega_1,\ldots, \omega_k \in \Omega^1_V$,
$$ (\omega_1 \wedge \ldots \wedge \omega_k):(D_1,\ldots, D_k) \mapsto det((\omega_i(D_j)_{i,j \leq k}) $$
\end{itemize}

On top of that,  the universal derivative $d: F \rightarrow \Omega^1_V$ extends uniquely into a complex (that is $d \circ d = 0$) of $F$-vector spaces: 
$$0 \rightarrow F \xrightarrow{d} \Omega^1_V \xrightarrow{d}  \Omega^2_V \xrightarrow{d} \ldots \xrightarrow{d} \Omega^n_V \rightarrow 0$$
characterized by the following compatibility condition with $\wedge$:  for every $p$-form $\omega_1$ and $q$-form  $\omega_2$
$$ d(\omega_1 \wedge \omega_2) = \omega_1 \wedge d\omega_2 + (-1)^p \omega_1 \wedge d\omega_2.$$
We refer to \cite{lang1999fundamentals} for more details on the construction outlined above.  

\begin{defn}
Given a derivation $D \in Der(F/\mathbb{C})$, we describe two operations on $\Omega^\bullet_V$ naturally attached to $D$ initially considered by E. Cartan:
\begin{itemize}
\item[(1)] the \emph{interior product} $i_D: \Omega^n_V \rightarrow \Omega^{n-1}_V$ is the contraction  by the derivation $D$: 
 $$i_D\omega(D_1,\ldots,D_{n-1})=\omega(D,D_1,\ldots,D_{n-1}).$$
\item[(2)] The \emph{Lie derivative} $L_D: \Omega^n_V \rightarrow \Omega^n_V$ is defined using ``Cartan's magic formula''
   $$L_D=i_D\circ d+d\circ i_D.$$
\end{itemize}
\end{defn} 

Notice that one can use a different approach to define the Lie derivative based on the Lie bracket of vector fields as described in the first section of \cite{jaoui2020foliations}.  Moreover,  the Lie derivative $L_D$ corresponds to the derivation $D$ defined on $\Omega^\bullet_V$ by Brestovski on page 12 of \cite{brestovski1982deviation}. 

\begin{fact} For $f\in F$, $\omega,\omega_1,\omega_2\in\Omega^n_V$ and $D\in \text{Der}(F/\m C)$, we have the following well-known identities:
\begin{eqnarray*}
L_D(f\omega) &=& D(f)\omega+fL_D(\omega), \\
L_D(\omega_1 \wedge \omega_2) &=& L_D(\omega_1) \wedge \omega_2 + \omega_1 \wedge L_D(\omega_2) \\
L_D (d \omega)&=& d L_D(\omega) , \\ 
L_{fD} &=& f L_D (\omega) + df \wedge i_D (\omega),\\
i_D(\omega_1 \wedge \omega_2) &=& i_D(\omega_1) \wedge \omega_2 + (-1)^n \omega_1 \wedge i_D(\omega_2)
\end{eqnarray*}
\end{fact}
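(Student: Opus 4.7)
My plan is to verify the five identities by a combination of direct manipulation and a derivation-by-generators argument, taking advantage of the graded algebra structure on $\Omega^\bullet_V$. The organizing observation is that $d$ is a graded derivation of degree $+1$, $i_D$ should be a graded derivation of degree $-1$ (this is the content of (5)), and $L_D$ should turn out to be an ordinary derivation of degree $0$ (this is (2)). Once (5) is established, the other four identities follow formally from Cartan's magic formula $L_D = i_D d + d i_D$ and the graded Leibniz rule for $d$.

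First I would prove identity (5), the graded Leibniz rule for the interior product. The most efficient approach is to note that, by $F$-linearity, it suffices to check the identity on pure wedge products $d\xi_{i_1}\wedge\cdots\wedge d\xi_{i_k}$ relative to a transcendence basis $\xi_1,\dots,\xi_r$ of $F/\mathbb{C}$. On such a pure wedge one expands using the definition $i_D\omega(D_1,\dots,D_{k-1})=\omega(D,D_1,\dots,D_{k-1})$; the sign $(-1)^n$ (with $n=\deg\omega_1$) arises from moving the entry $D$ past the $n$ slots of $\omega_1$. A cleaner variant avoids shuffle-permutation bookkeeping entirely: one verifies that both sides of (5) are $F$-linear graded derivations of degree $-1$ on $\Omega^\bullet_V$, and then checks that they agree on the generators $f\in F$ (both vanish) and on exact $1$-forms $df$ (both equal $D(f)$); uniqueness of a graded derivation determined by its action on generators then forces equality throughout.

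With (5) in hand, the remaining identities fall quickly. Identity (3) follows from $d\circ d = 0$: expanding $L_D(d\omega)$ via Cartan's formula kills the $d\circ d\circ i_D$ term, leaving $d(i_D d\omega) = d(i_D d\omega + d i_D\omega)=dL_D\omega$. Identity (2), the Leibniz rule for $L_D$, is obtained by applying Cartan's formula to $\omega_1\wedge\omega_2$ and combining the graded Leibniz rule for $d$ with (5); the $(-1)^{\deg\omega_1}$ signs coming from the degree-$+1$ derivation $d$ and the degree-$-1$ derivation $i_D$ cancel pairwise, producing the ordinary Leibniz rule. Identity (1) is the special case of (2) with $\omega_1=f\in\Omega^0_V$, using $L_Df = i_D(df) = df(D)=D(f)$. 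Finally, identity (4) is a direct computation from the definitions: $L_{fD}\omega = i_{fD}d\omega + d\,i_{fD}\omega = f\, i_D d\omega + d(f\, i_D\omega) = f\, i_D d\omega + df\wedge i_D\omega + f\, d(i_D\omega) = fL_D\omega + df\wedge i_D\omega$.

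The only substantive work lies in (5), and the main obstacle is keeping the signs straight. A brute-force expansion over shuffle permutations is mechanical but tedious; the derivation-by-generators strategy bypasses this, at the modest cost of first verifying that the map $\omega\mapsto i_D\omega$ really is a graded derivation of degree $-1$, which is essentially built into its definition as a contraction. Once that structural fact is confirmed, the remainder of the proof is formal.
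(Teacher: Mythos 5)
Your proposal is correct, but it is worth noting that the paper does not actually prove this Fact at all: it simply cites Lang (\emph{Fundamentals of Differential Geometry}, Proposition 5.3, p.~142), so any self-contained argument is by definition a different route. The route you choose is the standard one, and every step checks out against the paper's definitions: (3) follows from $d\circ d=0$ applied inside Cartan's formula; (2) follows because the four cross terms $(-1)^{p+1}d\omega_1\wedge i_D\omega_2$, $(-1)^{p}d\omega_1\wedge i_D\omega_2$, $(-1)^{p}i_D\omega_1\wedge d\omega_2$, $(-1)^{p-1}i_D\omega_1\wedge d\omega_2$ cancel in pairs; (1) is the degree-zero case of (2) once one notes $L_Df=i_D(df)=D(f)$; and (4) is immediate from $i_{fD}=f\,i_D$ together with $d(f\eta)=df\wedge\eta+f\,d\eta$. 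The one place you should be more careful is your ``cleaner variant'' of (5): as phrased it is mildly circular, because invoking uniqueness of a graded derivation of degree $-1$ determined by its values on $F$ and on exact $1$-forms presupposes that the contraction $i_D$ \emph{is} a graded derivation --- which is precisely identity (5). The honest content of (5) is the Laplace expansion, along the row corresponding to $D$, of the determinant appearing in the paper's definition of $\omega_1\wedge\cdots\wedge\omega_k$ evaluated on $(D,D_1,\dots,D_{n+m-1})$; this is the finite combinatorial computation your ``brute-force'' fallback describes, and it cannot be waved away as ``built into the definition.'' Since you do supply that fallback, the argument as a whole is sound; I would just present the direct expansion as the proof of (5) and the generators-and-uniqueness remark as a sanity check rather than the other way around.
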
 

See \cite[Proposition 5.3 pp. 142]{lang1999fundamentals} for a proof of these identities.  The main definition of this section is
\begin{defn}
Let $(E): y^{(n)} = f(y,y',\ldots, y^{(n-1)})$ be a complex autonomous equation of order $n$ where $f$ is a rational function of $n$ variables.  If $V = \mathbb{C}^n$ with coordinates $x_0,\ldots x_{n-1}$,  the equation $(E)$ defines a derivation $D_f \in Der(\mathbb{C}(V)/\mathbb{C})$  given by: 
$$D_f(x_i) = x_{i +1} \text{ for } i < n - 1 \text{ and } D_f(x_{n-1}) = f(x_0,\ldots,x_{n-1}).$$
We say that a volume form $\omega \in \Omega^{n}_V$ is an \emph{invariant volume form} for the equation $(E)$ if $$L_{D_f}(\omega) = 0.$$
\end{defn}

Before going in further details, we first give an analytic interpretation explaining the terminology although it will not be needed in our analysis.  Consider $$(E): y^{(n)} = f(y,y',\ldots, y^{(n)})$$ a differential equation as above and $\omega$ a (rational) volume form on  $V = \mathbb{C}^n$.   Denote by $U$ the (dense) open set of $V$ obtained by throwing away the poles $f$ and the poles of $\omega$.  As described above, the derivation $D_f$ gives rise to a vector field $s_f$ on $U$  namely the section $s_f: U \rightarrow T(U) \simeq U \times \mathbb{C}^n$ given by
 $$s_f(x_0,\ldots,x_{n-1}) = (x_0,\ldots,x_{n-1};x_1,\ldots,x_{n-1}, f(x_0,\ldots,x_{n-1})).$$
By definition, every point in $\overline{a} \in U$ is a non singular point of the vector field $s_f$.  The classical analytic theorem of local existence and uniqueness for the integral curves of a vector field implies that there exists an analytic function: 

$$ \phi: U_{\overline{a}} \times \mathbb{D} \subset U \times \mathbb{C} \rightarrow U.$$
where $U_{\overline{a}}$ is an analytic neighborhood of $\overline{a}$ and $\mathbb{D}$ a complex disk such that for every $\overline{b} \in U_{\overline{a}}$,  the function $t \mapsto \phi(\overline{b}, t)$ is the local analytic solution of the initial value problem 
$$
\frac{d\phi}{dt} = (\pi_2 \circ s_f)(\phi(t)) \text{ and } \phi(0) = \overline{b}.
$$

We will call $\phi$ the local flow of the vector field $v$ around $\overline{a}$.  The germ of $\phi$ at $(a,0)$ is determined by the vector field $s_f$.

\begin{fact}
With the notation above,  the volume form $\omega$ is invariant for the equation $(E)$ if and only if for every $\overline{a} \in U$,  the local flow $\phi$ around $\overline{a}$ preserves the volume form $\omega$: namely 
$$\text{ for every } t \in \mathbb{D}, \phi_t^\ast \omega_{\phi_t(\overline{a})} = \omega_{\overline{a}}$$
where $\phi_t: U_{\overline{a}} \rightarrow U$ is the function defined by $\phi_t(\overline{b}) = \phi(\overline{b},t)$ and $\omega_p$ denotes the germ of $\omega$ around $p$.
\end{fact}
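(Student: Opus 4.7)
The plan is to establish the classical differential-geometric identity
$$\frac{d}{dt}\phi_t^\ast \omega \;=\; \phi_t^\ast(L_{D_f}\omega),$$
valid as an identity of germs of forms on $U_{\overline a}$ for $t \in \mathbb D$. Granting this identity, both directions of the equivalence follow at once. If $L_{D_f}\omega = 0$, then $t \mapsto \phi_t^\ast \omega$ is constant in $t$ and equals its value at $t = 0$, which is $\omega$ since $\phi_0 = \mathrm{id}$. Conversely, if $\phi_t^\ast \omega_{\phi_t(\overline a)} = \omega_{\overline a}$ for every $\overline a \in U$ and every small $t$, differentiating at $t = 0$ shows that the germ of $L_{D_f}\omega$ at $\overline a$ is zero. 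Since $L_{D_f}\omega$ is a rational form on $V$ vanishing on the dense open set $U$, it vanishes as a rational form.

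The next step is to reduce the identity to its infinitesimal version at $t = 0$. Local uniqueness in the Cauchy problem gives the semigroup identity $\phi_{s+t} = \phi_t \circ \phi_s$ as germs for small $s,t$, and therefore
$$\frac{d}{dt}\phi_t^\ast \omega \;=\; \frac{d}{ds}\bigg|_{s=0}\phi_{s+t}^\ast \omega \;=\; \phi_t^\ast\!\left(\frac{d}{ds}\bigg|_{s=0}\phi_s^\ast \omega\right).$$
Writing $\mathcal{D}(\omega) := \frac{d}{ds}\big|_{s=0}\phi_s^\ast \omega$, it remains to show that $\mathcal{D} = L_{D_f}$ as operators on $\Omega^\bullet_V$.

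Both $\mathcal{D}$ and $L_{D_f}$ are $\mathbb C$-linear degree-zero derivations of the graded algebra $(\Omega^\bullet_V,\wedge)$ that commute with the exterior differential $d$: for $L_{D_f}$, commutation with $d$ follows from Cartan's formula together with $d \circ d = 0$, while for $\mathcal{D}$ it follows from naturality of $d$ under pullback. Since $\Omega^\bullet_V$ is generated as a graded algebra by $F$ and $dF$, any two such derivations agree on all of $\Omega^\bullet_V$ as soon as they agree on $F$. But for $h \in F$ one has $\phi_s^\ast h = h \circ \phi_s$, and differentiating at $s = 0$ using the initial condition $\frac{d\phi_s}{ds}\big|_{s=0} = \pi_2 \circ s_f$ yields $\mathcal{D}(h) = D_f(h)$, which coincides with $L_{D_f}(h) = i_{D_f}(dh) = dh(D_f) = D_f(h)$.

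The main obstacle is analytic rather than algebraic: one has to justify that $s \mapsto \phi_s^\ast \omega$ is differentiable in a sense that makes $\mathcal{D}$ a well-defined derivation on germs of rational forms, and that this $\mathcal{D}$ indeed commutes with $d$ and satisfies the Leibniz rule with respect to $\wedge$. Both facts are classical consequences of the smooth dependence of ODE solutions on time and initial data combined with the local coordinate formula for pullbacks of forms, but they are the point where the formal algebraic framework of Section \ref{5.1} must be connected to the underlying analytic flow.
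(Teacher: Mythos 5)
Your proposal is correct and follows essentially the same route as the paper: the paper simply cites the classical identity $\mathcal L_{D_f}(\omega) = \frac{d}{dt}\big|_{t=0}\phi_t^\ast\omega$ from Lang and refers to an external proof, whereas you derive that identity via the standard argument (semigroup property, then agreement of two degree-zero derivations commuting with $d$ on the generators $F$ and $dF$). The extra detail you supply is exactly the content the paper delegates to its references, so there is nothing to fix.
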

This follows from the formula on pp. 140 of \cite{lang1999fundamentals}: for every $p$-form $\omega$,
$$\mathcal L_v(\omega) = \frac{d}{dt}_{\mid t = 0} \phi_t^\ast \omega.$$
using the same proof as the proof of \cite[Proposition 3.2.1]{jaoui2020foliations}.  We don't give more details here since this analytic interpretation will not be needed in the rest of the paper. 

Instead, in the following subsections, we will use invariant volume forms together with the following result which follows from the work of Ax \cite{ax1971schanuel}  that can be found explicitly in \cite[Proposition 4]{Rosenlitch2} or \cite[Lemma 6.10]{Marker96modeltheory}.

\begin{fact}\label{AxLemma}
Let $V$ be an irreducible affine variety over $\m C$ and let $F=\m C(V)$ be its function field. Let $u_1,\ldots,u_n,v\in F$ be such that all the $u_i$'s are non zero. Suppose $c_1,\ldots,c_n\in\mathbb{C^*}$ are linearly independent over $\mathbb{Q}$ and let 
$$\omega=dv+\sum_{i=1}^nc_i\frac{du_i}{u_i}.$$
Then $\omega=0$ in $\Omega_V$ if and only if $du_1=\ldots=du_n=dv=0$, i.e $u_1,\ldots,u_n,v\in\mathbb{C}$.
\end{fact}

\subsection{A warm-up case} \label{5.2}

The techniques we will use in our setting already have (known) strong consequences for order one differential equations, where the arguments are often simpler. For instance, the method we use allows us to give a proof a result of Hrushovski and Itai \cite[2.22]{HrIt}.

\begin{lem} \label{twovolumeforms}
Let $V$ be an irreducible (affine) algebraic variety of dimension $n$ and let $D \in Der(\mathbb{C}(V)/\mathbb{C})$ be a derivation.  Assume that the constant field $\mathbb{C}(V)^D$ of $(\mathbb{C}(V),D)$ is equal to $\mathbb{C}$. The space of invariant volume forms 
$$ \Omega^{n}_{V,D} = \lbrace \omega \in \Omega^n_V \mid L_D(\omega) = 0\rbrace$$
is a complex vector space of dimension $\leq 1$.
\end{lem}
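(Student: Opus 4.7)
The plan is to use the one-dimensionality of $\Omega^n_V$ as an $F$-vector space (where $F = \mathbb{C}(V)$) to reduce everything to a statement about the field of constants $F^D$. Since $\dim V = n$, the space $\Omega^n_V$ has $F$-dimension $\binom{n}{n} = 1$. So if I fix any nonzero $\omega_1 \in \Omega^n_{V,D}$, then every other $\omega_2 \in \Omega^n_V$ can be written uniquely as $\omega_2 = f \omega_1$ for some $f \in F$. (If no nonzero invariant volume form exists, the space is $\{0\}$ and there is nothing to prove.)

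Next I would exploit the Leibniz-type identity $L_D(f\omega_1) = D(f)\omega_1 + f L_D(\omega_1)$ recalled in the fact preceding this lemma. Suppose $\omega_2 = f\omega_1$ is also an invariant volume form. Then
\[
0 = L_D(\omega_2) = L_D(f\omega_1) = D(f)\omega_1 + f \cdot L_D(\omega_1) = D(f)\omega_1,
\]
using $L_D(\omega_1) = 0$. Since $\omega_1$ is a basis of $\Omega^n_V$ over $F$, this forces $D(f) = 0$, i.e.\ $f \in F^D = \mathbb{C}$ by hypothesis.

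This shows that every element of $\Omega^n_{V,D}$ is a complex scalar multiple of $\omega_1$, so $\Omega^n_{V,D} = \mathbb{C}\cdot\omega_1$ is at most one-dimensional over $\mathbb{C}$. The only step that is not purely formal is invoking the hypothesis $\mathbb{C}(V)^D = \mathbb{C}$ to pass from $D(f)=0$ to $f \in \mathbb{C}$; without this assumption the space would instead be a line over the (possibly larger) constant field, and the bound would fail. No obstacle beyond that arises, since the identity $L_D(f\omega) = D(f)\omega + fL_D(\omega)$ is exactly the structural fact needed.
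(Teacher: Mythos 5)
Your proof is correct and is essentially identical to the paper's argument: both write a second invariant form as $f\omega_1$ using that $\Omega^n_V$ is one-dimensional over $\mathbb{C}(V)$, apply the Leibniz identity for $L_D$ to get $D(f)=0$, and invoke $\mathbb{C}(V)^D=\mathbb{C}$ to conclude $f\in\mathbb{C}$. No differences worth noting.
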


\begin{proof}
Clearly,  $\Omega^{n}_{V,D}$ is a complex vector space.  It remains to show that any two non-zero invariant volumes forms $\omega_1,\omega_2 \in \Omega^{n}_{V,D}$ are  linearly dependent.  Since $\Omega^n_V$ is a $\mathbb{C}(V)$ vector space of dimension one,  there exists $f \in \mathbb{C}(V)$ such that $\omega_1 = f \omega_2$.  Computing $L_D$ on both side,  we get: 
$$0 = L_D(\omega_1) = L_D(f\omega_2) = D(f)\omega_2 + f L_D(\omega_2) = D(f) \omega_2.$$ 
Since $\omega_2 \neq 0$,  we get $D(f) = 0$ which implies $f \in \mathbb{C}$.
\end{proof}

In general, this vector space may very well be the trivial vector space but when $V = \mathbb{P}^1$ (or more generally when $V$ is a curve), an easy computation shows that Hrushovski-Itai $1$-form is always an invariant volume form, so that this vector space is always one-dimensional.

\begin{lem} Consider two differential equations of order one of the form: 
$$(E_1): x' = f(x) \text{ and } (E_2):y' = g(y).$$
and denote by $c_1,\ldots,c_r$ the residues of $1/f(x)$ and by $d_1,\ldots,d_s$ the residues of $1/g(x)$.  We assume that $1/f(x)$ and $1/g(y)$ have at least one non zero residue and that $c_1,\ldots,c_r$ are $\mathbb{Q}$-linearly disjoint from $d_1,\ldots,d_s$. That is: 
$$ldim_\mathbb{Q}(c_1,\ldots,c_r) + ldim_\mathbb{Q}(d_1,\ldots,d_s) = ldim_\mathbb{Q}(c_1,\ldots,c_r,d_1,\ldots,d_s).$$
Then $(E_1)$ and $(E_2)$ are weakly orthogonal.
\end{lem}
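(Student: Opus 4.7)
The plan is to argue by contradiction using the canonical invariant $1$-forms attached to each equation. Assume $(E_1)$ and $(E_2)$ are not weakly orthogonal; then there exists an irreducible algebraic curve $V \subset \mathbb{A}^2$ defined over $\mathbb{C}$, invariant under the product vector field $D = f(x)\partial_x + g(y)\partial_y$, and such that both $x|_V$ and $y|_V$ are non-constant in $\mathbb{C}(V)$. Consider the Hrushovski--Itai $1$-forms $\omega_1 := dx/f(x)$ and $\omega_2 := dy/g(y)$. Each is closed and satisfies $i_D\omega_i = 1$, so by Cartan's formula both are $D$-invariant. Since $V$ cannot be contained in the finite singular locus $\{f = g = 0\}$, the restricted derivation $D|_V \in \mathrm{Der}(\mathbb{C}(V)/\mathbb{C})$ is non-zero, and the contraction $i_{D|_V} : \Omega^1_V \to \mathbb{C}(V)$ is therefore an injective $\mathbb{C}(V)$-linear map between rank-one modules. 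From $i_{D|_V}(\omega_1|_V - \omega_2|_V) = 1 - 1 = 0$ I obtain the key identity $\omega_1|_V = \omega_2|_V$ in $\Omega^1_V$.

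Next I would exploit this identity via partial fraction decompositions
$$\omega_1 = dG_1(x) + \sum_{i=1}^r c_i\, \frac{d(x-a_i)}{x-a_i}, \qquad \omega_2 = dG_2(y) + \sum_{j=1}^s d_j\, \frac{d(y-b_j)}{y-b_j},$$
for some $G_1 \in \mathbb{C}(x)$ and $G_2 \in \mathbb{C}(y)$. Select maximal $\mathbb{Q}$-linearly independent subfamilies $\gamma_1 = c_{\sigma(1)}, \ldots, \gamma_p = c_{\sigma(p)}$ of $(c_i)_i$ and $\delta_1 = d_{\tau(1)}, \ldots, \delta_q = d_{\tau(q)}$ of $(d_j)_j$, and clear denominators by a common integer $N > 0$ using the rational expansions $c_i = \sum_k m_{ik}\gamma_k$ and $d_j = \sum_k n_{jk}\delta_k$. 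The equality $\omega_1|_V = \omega_2|_V$ then becomes
$$d(G_1 - G_2) + \sum_{k=1}^p \frac{\gamma_k}{N} \cdot \frac{dU_k}{U_k} - \sum_{k=1}^q \frac{\delta_k}{N} \cdot \frac{dV_k}{V_k} = 0 \quad \text{in } \Omega^1_V,$$
where $U_k = \prod_i (x-a_i)^{Nm_{ik}} \in \mathbb{C}(x)^*$ and $V_k = \prod_j (y-b_j)^{Nn_{jk}} \in \mathbb{C}(y)^*$ have integer exponents. By the $\mathbb{Q}$-linear disjointness hypothesis, the nonzero complex numbers $\gamma_1/N, \ldots, \gamma_p/N, -\delta_1/N, \ldots, -\delta_q/N$ are $\mathbb{Q}$-linearly independent, so applying Fact \ref{AxLemma} in $F = \mathbb{C}(V)$ forces $U_k|_V, V_k|_V \in \mathbb{C}$ for every $k$.

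This contradicts the non-constancy of $x$ and $y$ on $V$: having chosen $\gamma_k = c_{\sigma(k)}$, the product $U_k$ contains the factor $(x - a_{\sigma(k)})^N$, which is uncancelled by the other $(x-a_i)^{Nm_{ik}}$ factors since the $a_i$ are distinct. Hence $U_k$ is non-constant in $\mathbb{C}(x)$, and because $x|_V$ is transcendental over $\mathbb{C}$ the restriction $U_k|_V$ is non-constant in $\mathbb{C}(V)$---contradicting $U_k|_V \in \mathbb{C}$. The main technical point in this argument is the bookkeeping of the basis change, ensuring that each ``test function'' $U_k$, $V_k$ exhibits a genuine zero or pole in its variable after restriction to $V$; this relies on the distinctness of the $a_i$'s and $b_j$'s and on the assumption that both families of residues contain at least one nonzero element.
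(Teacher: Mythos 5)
Your proof is correct and follows the paper's strategy almost exactly: restrict the Hrushovski--Itai forms $\omega_1 = dx/f(x)$ and $\omega_2 = dy/g(y)$ to the invariant correspondence, force the identity $\omega_1 = \omega_2$ there, pass to partial fractions with a $\mathbb{Q}$-linearly independent system of residues, and apply Fact \ref{AxLemma} to contradict the dominance of both projections. The one step you handle differently is the derivation of $\omega_1|_Z = \omega_2|_Z$: the paper invokes Lemma \ref{twovolumeforms} (the space of invariant volume forms on $Z$ is at most one-dimensional, which requires the hypothesis $\mathbb{C}(Z)^D = \mathbb{C}$) to get $\omega_1 = c\,\omega_2$ on $Z$ and then computes $c = 1$ from the normalization $i_D\omega_i = 1$, whereas you observe directly that $i_{D|_Z}$ is an injective $\mathbb{C}(Z)$-linear map on the rank-one module $\Omega^1_Z$ and kills $\omega_1|_Z - \omega_2|_Z$. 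Your version is slightly more economical, since it needs only $D|_Z \neq 0$ rather than the triviality of the constant field of $(\mathbb{C}(Z), D)$ (a hypothesis of Lemma \ref{twovolumeforms} that the paper does not explicitly verify for $Z$); otherwise the two arguments are the same, down to the bookkeeping that guarantees each $U_k$ has a genuine pole or zero and is therefore nonconstant on $Z$.
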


\begin{proof}
First notice that both the equations $(E_1)$ and $(E_2)$ admit an invariant volume form which are respectively the 1-forms 
$$\omega_1 = \frac {dx}{f(x)} \text{ and } \omega_2 = \frac {dy}{g(y)} $$
associated by Hrushovski and Itai to the equations $(E_1)$ and $(E_2)$. By Lemma \ref{twovolumeforms},  every invariant volume form will be a constant multiple of these forms. So $\omega_1$ and $\omega_2$ are the unique invariant volume forms of $(E_1)$ and $(E_2)$ normalized by 
$$\omega_i(s_i) = 1 \text{ for }i = 1,2.$$
where $s_1(x) = f(x) \frac d {dx}$ and $s_2(y) = g(y) \frac d {dy}$ are the vector fields associated with the derivation $D_f$ on $\mathbb{C}(x)$ and $D_g$ on $\mathbb{C}(y)$ respectively.

For the sake of a contradiction, assume that these two equations are not weakly orthogonal: this means that there exists a closed generically finite to finite correspondence $Z \subset \mathbb{P}^1 \times \mathbb{P}^1$ which is invariant under the derivation $D_f \times D_g$ associated with the product vector field $s_1(x) \times s_2(y)$ on $\mathbb{P}^1 \times \mathbb{P}^1$. Without loss of generality,  we can assume that $Z$ is irreducible.  

Consider the two pull-backs of the two $1$-forms $\omega_1$ and $\omega_2$ (by the respective projections) to $\mathbb{P}^1 \times \mathbb{P}^1$ which are still given by the formulas above in the coordinates $(x,y)$ on $\mathbb{P}^1 \times \mathbb{P}^1$.   Since
$$\mathcal L_{D_f \times D_g}(\omega_1) = \mathcal L_{D_f}(\omega_1) = 0$$
and similarly for $\omega_2$, both $\omega_1$ and $\omega_2$ are invariant $1$-forms for the derivation $D_f \times D_g$ on $\mathbb{P}^1 \times \mathbb{P}^1$.  It follows that their restrictions $\omega_{1 \mid Z}$ and $\omega_{2 \mid Z}$ are two invariant volume forms on $Z$ endowed with the derivation induced by $D_f \times D_g$ on $\mathbb{C}(Z)$.  By Lemma \ref{twovolumeforms}, we conclude that for some $c \in \mathbb{C}$, 
$$(\omega - c \omega_2)_{\mid Z} = 0.$$
Noting the normalization in our case, we see 
$$1 = \omega_1(s_1 \times s_2) = c\omega_2(s_1 \times s_2) = c$$
so that in fact $c = 1$ and the one-form $\omega_1 - \omega_2$ vanishes identically on $Z$.  Write 

\begin{eqnarray*}
\frac{1}{f(x)} = \frac{df_1}{dx} + \sum \frac{c_i}{x - a_i} \\
\frac{1}{g(y)} = \frac{dg_1}{dy} + \sum \frac{d_j}{y - b_j}. 
\end{eqnarray*} 

Using this notation, we have an equality of $1$-forms on $Z$ 
\begin{eqnarray*}
 df - dg  &= \sum c_i\frac{d(x-a_i)}{x - a_i} +  \sum d_j\frac{d(y-b_j)}{y - b_j}  \\
 & = \sum \alpha_i \frac{df_i} {f_i} + \sum \beta_j\frac{dg_j}{g_j}.
\end{eqnarray*}
where the $\alpha_i$ forms a $\mathbb{Q}$-basis of $c_1,\ldots,c_n$, the $\beta_j$ form a $\mathbb{Q}$-basis of $ d_1,\ldots, d_s$ and $f_i(x) \in \mathbb{C}(x), g_j(y) \in \mathbb{C}(y)$. Note that a linear combination of logarithmic derivatives can always be rewritten as a sum of logarithmic derivatives in which the coefficients are linearly independent over $\m Q$. See Remark on page 76 of \cite{Marker96modeltheory}. 

The assumption of linear independence means that the $\beta_j$ and the $\alpha_i$ form a $\mathbb{Q}$-linearly independent set.  By Fact \ref{AxLemma}, 
$f_i(x)$ is constant on $Z$ for all $i$ and $g_j(y)$ is constant on $Z$ for all $j$. Since $f(x)$ and $g(y)$ have at least one non-zero residue, we conclude that $Z$ can not project dominantly on the solution sets of $(E_1)$ and $(E_2)$. Contradiction.
\end{proof}

\subsection{Our setting} \label{5.3}

Let $f\in\m C(z)$ be a rational function and consider the associated equation $(\star)$. Let $V=\m C^2\setminus Z_f$ in coordinates $(x,y)$, where $Z_f$ is the union of horizontal line $y = 0$ and, for each pole $a$ of $f$, the vertical line $x = a$. Consider the section of the tangent bundle $s_f:V\rightarrow T(V)$
  $$s_f(x,y)=(x,y,y,yf(x)).$$ Let $\pi_2 s_f (x,y) := (y,yf(x)).$Then we showed, in Section 2, that if $f(z)$ is such that for any $h\in \m C(z)$ $f(z)\neq\frac{d h}{dz}$, it follows that $(V,s_f)^\#=\{(x,y)\in V_f(\mathcal{U}): x^\prime=y\land y^\prime=yf(x)\}$ is a geometrically trivial strongly minimal set.
  
 This section $s_f$ gives rise to the derivation $D_{f}\in \text{Der}(\m C(V)/\m C)$ given by
  $$D_{f}(h)=y \frac{\partial h} {\partial x}+{yf(x)}\frac{\partial h} {\partial y}.$$
  In particular, $D_{f}(x)=y$, $D_{f}(y)=yf(x)$ and $D_{f}(\frac{1} {y})={\frac{-f(x)} {y}}.$

\begin{lem}\label{Volume form}
For any $f\in\m C(z)$, the derivation (or vector field) $D_{f}$ preserves the volume form
$$\omega={{dx\wedge dy}\over y}\in \Omega^{2}_V$$ and $L_{D_f}(\omega)=0$.
\end{lem}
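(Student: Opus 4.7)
The plan is a direct computation using Cartan's magic formula, exploiting the fact that $\omega$ is a top-degree form on the $2$-dimensional variety $V$.

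Since $\dim V = 2$ and $\omega \in \Omega^2_V$, we have $d\omega = 0$ for trivial degree reasons. Cartan's magic formula then reduces the Lie derivative to $L_{D_f}(\omega) = d(i_{D_f}\omega)$, so I only need to compute the interior product $i_{D_f}\omega$ and check that it is a closed $1$-form. Writing $\omega = \frac{1}{y}\, dx \wedge dy$ and using the standard formula $i_{D}(dx \wedge dy) = D(x)\, dy - D(y)\, dx$ (which follows from the determinantal description of the wedge product and $dx(D) = D(x)$), I obtain
\[
i_{D_f}\omega \;=\; \frac{1}{y}\bigl(D_f(x)\, dy - D_f(y)\, dx\bigr) \;=\; \frac{1}{y}\bigl(y\, dy - y f(x)\, dx\bigr) \;=\; dy - f(x)\, dx.
\]

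The final step is to differentiate this $1$-form:
\[
d\bigl(dy - f(x)\, dx\bigr) \;=\; -\, df(x) \wedge dx \;=\; -f'(x)\, dx \wedge dx \;=\; 0.
\]
Combining, $L_{D_f}(\omega) = d(i_{D_f}\omega) = 0$, as required.

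I expect no genuine obstacle: the calculation is a two-line verification once one invokes $d\omega = 0$ and the explicit formulas for $D_f(x)$ and $D_f(y)$ recorded just before the lemma. As a sanity check one can also redo the computation via the Leibniz rule $L_{D_f}(h\eta) = D_f(h)\eta + h L_{D_f}(\eta)$ applied to $h = 1/y$ and $\eta = dx \wedge dy$, using $L_{D_f}(dx) = d(D_f(x)) = dy$ and $L_{D_f}(dy) = d(yf(x)) = f(x)\, dy + y f'(x)\, dx$; the term $D_f(1/y)\, dx \wedge dy = -\tfrac{f(x)}{y} dx \wedge dy$ is exactly cancelled by $\tfrac{1}{y}\, dx \wedge (f(x)\, dy) = \tfrac{f(x)}{y}\, dx \wedge dy$, while the contributions $dy \wedge dy$ and $dx \wedge dx$ vanish. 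This gives the same conclusion and serves as a cross-check.
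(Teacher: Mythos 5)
Your proof is correct, and your primary route is genuinely different from the paper's. The paper proves the lemma by a direct Leibniz-rule expansion: it applies $L_{D_f}(h\eta) = D_f(h)\eta + hL_{D_f}(\eta)$ to $h = 1/y$, $\eta = dx\wedge dy$, computes $L_{D_f}(dx) = dy$ and $L_{D_f}(dy) = f(x)\,dy + yf'(x)\,dx$, and watches the two surviving terms cancel --- this is precisely your ``sanity check'' computation, so your cross-check reproduces the paper's argument verbatim. Your main argument instead invokes Cartan's magic formula together with the degree observation $d\omega = 0$ (since $\omega$ is a top-degree form on the $2$-dimensional $V$), reducing everything to checking that the $1$-form $i_{D_f}\omega = dy - f(x)\,dx$ is closed. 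This is arguably the more conceptual route given that the paper \emph{defines} $L_D$ by Cartan's formula, and it has the added benefit of isolating the $1$-form $dy - f(x)\,dx$, which is exactly the object that reappears in Lemma \ref{constantlem} when the interior product is applied to $\omega_1 - c\,\omega_2$ on the correspondence $Z$; your computation makes that later step transparent. The paper's route, by contrast, needs only the listed Leibniz identities and no appeal to the vanishing of $\Omega^3_V$. Both are two-line verifications and both are sound.
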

\begin{proof}
We only need to show that $L_{D_f}(\omega)=0$.
\begin{eqnarray}
L_{D_f}(\omega)&=&L_{D_f}({{dx\wedge dy}\over y})\notag\\
&=&D_f({1\over y})({dx\wedge dy})+{1\over y}L_{D_f}({dx\wedge dy})\notag\\
&=&{-f(x)\over y}({dx\wedge dy})+{1\over y}\left[{dy\wedge dy}
+dx\wedge(f'(x)ydx+f(x)dy)\right]\notag\\
&=&{-f(x)\over y}({dx\wedge dy})+{1\over y}\left[f'(x)ydx\wedge dx+f(x)dx\wedge dy)\right]\notag\\
&=&0\notag
\end{eqnarray}
\end{proof}

Now assume that $f(z),g(z)\in \m C(z)$ are such that for any $h\in \m C(z)$, we have that neither $f(z)\neq\frac{d h}{dz}$ nor $g(z)\neq\frac{d h}{dz}$. {We do not exclude here the possibility that $f(z)=g(z)$}. Let $V=\m C^2\setminus Z_f$ and $W=\m C^2\setminus Z_g$ with coordinates $(x_1,y_1)$ and $(x_2,y_2)$ respectively. Assume that the two strongly minimal definable sets $(V,s_f)^\#$ and $(W,s_g)^\#$ are nonorthogonal. Then since they are geometrically trivial, they are non-weakly orthogonal. So there is $Z\subset V\times W$ a closed complex  $D_f\times D_g$ invariant generically finite to finite correspondence witnessing nonorthogonality. We write 
$$\omega_1={{dx_1\wedge dy_1}\over y_1}\in \Omega^{2}_V\;\;\text{and}\;\;\omega_2={{dx_2\wedge dy_2}\over y_2}\in \Omega^{2}_W$$ for the corresponding $2$-forms. From Lemma \ref{Volume form}, we have that $L_{D_f}(\omega_1)=L_{D_g}(\omega_2)=0$.
We will now view $\omega_1$ and $\omega_2$ as $2$-forms on $Z$, which are volume forms since $Z$ is a finite to finite correspondence ($\text{tr.deg.}_{\m C}\m C(Z)=2$). More precisely we let $\tilde{\omega}_1$ be the 2-form on $Z$ defined as the pullback of $\omega_1$ by the projection map $\pi_1:Z\rightarrow V$. The form $\tilde{\omega}_2$ is defined similarly. By construction we have that
$$L_{D_f\times D_G}(\tilde{\omega}_1)=L_{D_f}(\omega_1)=0.$$ A similar expression holds for $\tilde{\omega}_2$.

\begin{lem}\label{innerproduct}
Let $Z$ be as above, then there exist $c\in\m C^*$ such that 
$$i_{D_f\times D_g}\left({{dx_1\wedge dy_1}\over y_1}-c\cdot {{dx_2\wedge dy_2}\over y_2} \right)=0$$
where $i_{D_f\times D_g}$ is the interior product.
\end{lem}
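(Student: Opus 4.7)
The approach closely follows the warm-up of Section \ref{5.2}, now applied to the $2$-dimensional irreducible variety $Z$ carrying the derivation $D := D_f \times D_g$ restricted to $\mathbb{C}(Z)$. The plan is to show that $\tilde\omega_1$ and $\tilde\omega_2$ are both non-zero invariant volume forms on $(Z,D)$, and then apply Lemma \ref{twovolumeforms} to conclude that they agree up to a non-zero scalar; the desired interior-product identity is then immediate.

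First I would verify the hypothesis $\mathbb{C}(Z)^{D} = \mathbb{C}$ of Lemma \ref{twovolumeforms}. Because $Z$ is $D$-invariant and is a generically finite-to-finite correspondence, the projections $\pi_1\colon Z \to V$ and $\pi_2\colon Z \to W$ are both generically finite. Hence the generic type of $Z$ (as a $D$-variety) is interalgebraic over $\mathbb{C}$ with the generic type of $(V,s_f)^{\#}$. By Theorem \ref{stminthm}, $(V,s_f)^{\#}$ is strongly minimal, and since it is autonomous of order $2$, Fact \ref{autonomous}(1) gives that its generic type is orthogonal to the constants. Interalgebraicity transfers this orthogonality to $Z$, so any $h \in \mathbb{C}(Z)$ with $D(h) = 0$ must lie in $\mathbb{C}$.

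Next, the pullback $2$-forms $\tilde\omega_1 = \pi_1^{*}\omega_1$ and $\tilde\omega_2 = \pi_2^{*}\omega_2$ are volume forms on $Z$: they are top-degree ($\dim Z = 2$), and they are non-zero because $\pi_1$ and $\pi_2$ are dominant, so their pullbacks of non-vanishing top-degree forms do not degenerate generically on $Z$. They are invariant under $D$: by Lemma \ref{Volume form} we have $L_{D_f}\omega_1 = 0$ and $L_{D_g}\omega_2 = 0$, and the $D$-invariance of $Z$ means that $\pi_1$ and $\pi_2$ intertwine $D$ with $D_f$ and $D_g$ respectively, so pullback commutes with the Lie derivative and gives $L_{D}\tilde\omega_1 = L_{D}\tilde\omega_2 = 0$.

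Applying Lemma \ref{twovolumeforms} to $(Z,D)$, the space of invariant volume forms is at most one-dimensional over $\mathbb{C}$. Since $\tilde\omega_1$ and $\tilde\omega_2$ are both non-zero, there exists $c \in \mathbb{C}^{*}$ such that $\tilde\omega_1 = c\,\tilde\omega_2$ identically on $Z$, and therefore
$$i_{D}\!\left(\tilde\omega_1 - c\,\tilde\omega_2\right) = 0,$$
which is exactly the desired identity. The only genuinely non-formal step is the identification $\mathbb{C}(Z)^{D} = \mathbb{C}$; this is where strong minimality of the equation $(\star)$ — and thus Theorem \ref{stminthm} — is crucially used, via the orthogonality-to-constants consequence provided by Fact \ref{autonomous}(1). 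Everything else is bookkeeping with Cartan's formulas and the functoriality of pullback.
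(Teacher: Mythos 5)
Your proof is correct and follows essentially the same route as the paper: the paper also writes $\tilde\omega_1=h\tilde\omega_2$, computes $L_{D_f\times D_g}$ to get $(D_f\times D_g)(h)=0$, and invokes orthogonality to the constants to conclude $h\in\m C$ — i.e., it inlines the computation of Lemma \ref{twovolumeforms} rather than citing it. Your version is marginally more careful in one respect: you explicitly justify $c\neq 0$ by noting that the pullbacks of the volume forms under the generically finite dominant projections are non-zero, a point the paper's proof glosses over.
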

\begin{proof}
Since $Z$ is $2$-dimensional, the space of rational $2$-forms on $Z$ is a $\m C(Z)$-vector space of dimension
one. So there exists $h\in \m C(Z)$ such that
$$\tilde{\omega}_1=h\tilde{\omega}_2.$$
We hence have that 
\begin{eqnarray}
0&=&L_{D_f\times D_G}(\tilde{\omega}_1)\notag\\
&=&L_{D_f\times D_G}(h\tilde{\omega}_2)\notag\\
&=&(D_f\times D_g)(h)\omega_2+hL_{D_g}(\omega_2)\notag\\
&=&(D_f\times D_g)(h)\omega_2\notag
\end{eqnarray}
So $h$ is in the constant field of $D_f\times D_g$ in $\m C(Z)$. Since the equations are orthogonal to $\m C$, we have that $h\in\m C$. By construction we have that ${\omega}_1=c{\omega}_2$, for $c\in\mathbb{C}$.

Hence on $Z$, the two form ${\omega}_1-c{\omega}_2={{dx_1\wedge dy_1}\over y_1}-c\cdot {{dx_2\wedge dy_2}\over y_2}$ is identically $0$. Furthermore, on $Z$, the $1$-form obtained by applying the interior product $i_{D_f\times D_g}$ is 0 and the result follows.
\end{proof}

\begin{lem} \label{constantlem}
Let $Z$ be as above, then there is $c\in\m C^*$ such that on $Z$
$$dy_1-f(x_1)dx_1-c(dy_2-f(x_2)dx_2)=0\in\Omega_Z^1$$
\end{lem}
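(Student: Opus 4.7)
The plan is to unwind the interior product appearing in Lemma \ref{innerproduct} by a direct calculation in the explicit coordinates $(x_1,y_1,x_2,y_2)$ on $V \times W$. First I would recall the two basic identities
$$i_D(\alpha \wedge \beta) = (i_D\alpha)\,\beta - \alpha \wedge (i_D\beta), \qquad i_D(h\omega) = h\, i_D(\omega),$$
valid for $1$-forms $\alpha,\beta$, any derivation $D$, and any function $h$, together with the defining relation $i_D(d\phi) = D(\phi)$.

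Applying these to $\omega_1 = dx_1\wedge dy_1 / y_1$, and using that on the first factor $(D_f \times D_g)(x_1) = D_f(x_1) = y_1$ and $(D_f \times D_g)(y_1) = D_f(y_1) = y_1 f(x_1)$, I would compute
$$i_{D_f\times D_g}(\omega_1) = \frac{1}{y_1}\bigl(y_1\,dy_1 - y_1 f(x_1)\,dx_1\bigr) = dy_1 - f(x_1)\,dx_1.$$
The symmetric calculation on the second factor, using $D_g(x_2) = y_2$ and $D_g(y_2) = y_2 g(x_2)$, would yield $i_{D_f\times D_g}(\omega_2) = dy_2 - g(x_2)\,dx_2$. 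Substituting both expressions into the vanishing statement from Lemma \ref{innerproduct} then produces the claimed identity on $Z$. (I read the $f(x_2)$ appearing in the display as a placeholder for the rational function defining the second equation, i.e.\ $g(x_2)$; in the application of interest the two rational functions $f$ and $g$ will ultimately have to agree, so the distinction is cosmetic.)

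There is no real obstacle: the content of the lemma is not a calculation but a change of perspective. It converts the $2$-form identity $\widetilde{\omega}_1 = c\,\widetilde{\omega}_2$ on the two-dimensional variety $Z$ into a $1$-form identity, which is precisely the format required in order to apply Ax's theorem (Fact \ref{AxLemma}) to the partial fraction decompositions of $1/(yf(x))$ and $1/(yg(x))$ in the next step of the argument.
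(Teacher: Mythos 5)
Your proposal is correct and is essentially identical to the paper's own proof: both unwind the interior product via $i_D(\upsilon_1\wedge\upsilon_2)=i_D(\upsilon_1)\,\upsilon_2-\upsilon_1\wedge i_D(\upsilon_2)$ and the values $D_f(x_1)=y_1$, $D_f(y_1)=y_1f(x_1)$ (and symmetrically with $g$), then substitute into Lemma \ref{innerproduct}. Your reading of the $f(x_2)$ in the statement as $g(x_2)$ is also right; the paper's own proof ends with $g(x_2)$, confirming the statement contains a typo.
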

\begin{proof}
We will use the formula
$$i_D(\upsilon_1\wedge\upsilon_2)=i_D(\upsilon_1)\wedge\upsilon_2-\upsilon_1\wedge i_D(\upsilon_2)$$where $D$ is any derivation and $\upsilon_1,\upsilon_2$ are 1 forms. Starting with lemma \ref{innerproduct}
\begin{eqnarray}
0&=&i_{D_f\times D_g}\left({{dx_1\wedge dy_1}\over y_1}-c\cdot {{dx_2\wedge dy_2}\over y_2} \right)\notag\\
&=&i_{D_f}({{dx_1\wedge dy_1}\over y_1})-c\cdot i_{D_g}({{dx_2\wedge dy_2}\over y_2})\notag\\
&=&{i_{D_f}(dx_1)\wedge dy_1-dx_1\wedge i_{D_f}(dy_1)\over y_1}-c\cdot{i_{D_f}(dx_2)\wedge dy_2-dx_2\wedge i_{D_f}(dy_2)\over y_2}\notag\\
&=&{y_1dy_1-y_1f(x_1)dx_1\over y_1}-c\cdot{y_1dy_2-y_2f(x_2)dx_2\over y_2}\notag\\
&=& dy_1-f(x_1)dx_1-c(dy_2-g(x_2)dx_2)\notag
\end{eqnarray}
\end{proof}
\begin{prop}
Let $Z$ be as above. Then $Z$ is contained in a closed hypersurface of $V\times W$ of the from $Z(p)$ for some $p\in\m C[x,y]$.
\end{prop}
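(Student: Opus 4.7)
The plan is to expand the $1$-form identity of Lemma \ref{constantlem} by partial fractions and then apply the Ax-Schanuel type statement Fact \ref{AxLemma} in the function field $\m C(Z)$. Write the partial fraction decompositions
$$f(x_1) = F'(x_1) + \sum_{i=1}^r \frac{\alpha_i}{x_1 - a_i}, \qquad g(x_2) = G'(x_2) + \sum_{j=1}^s \frac{\beta_j}{x_2 - b_j},$$
with $F, G \in \m C(z)$ and with residues $\alpha_i, \beta_j \in \m C^\ast$; both lists are nonempty by the hypothesis that neither $f$ nor $g$ is the derivative of a rational function. Substituting into the identity $dy_1 - f(x_1)\,dx_1 = c(dy_2 - g(x_2)\,dx_2)$ supplied by Lemma \ref{constantlem}, we obtain, in $\Omega^1_Z$,
$$d\bigl(y_1 - c y_2 - F(x_1) + c G(x_2)\bigr) = \sum_{i=1}^r \alpha_i\, \frac{d(x_1-a_i)}{x_1-a_i} \;-\; c \sum_{j=1}^s \beta_j\, \frac{d(x_2-b_j)}{x_2-b_j}.$$

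Next, choose a $\m Q$-basis $\gamma_1, \ldots, \gamma_k$ (with $k \geq 1$) of the $\m Q$-linear span in $\m C$ of $\{\alpha_i\} \cup \{-c\beta_j\}$. Decomposing each coefficient over this basis and clearing a common positive integer denominator $N$, the right-hand side regroups as $\sum_{l=1}^k \gamma_l\, dh_l/h_l$, where each monomial
$$h_l = \prod_i (x_1 - a_i)^{m_{i,l}} \prod_j (x_2 - b_j)^{n_{j,l}} \in \m C(x_1, x_2)$$
has integer exponents $m_{i,l}, n_{j,l}$. Setting $v = y_1 - cy_2 - F(x_1) + cG(x_2)$, the displayed equality on $Z$ becomes
$$d(Nv) \;-\; \sum_{l=1}^k \gamma_l\, \frac{dh_l}{h_l} \;=\; 0 \quad \text{in } \Omega^1_Z.$$
Since the $\gamma_l$ are $\m Q$-linearly independent, Fact \ref{AxLemma} applied in $\m C(Z)$ forces $dh_l = 0$ in $\Omega^1_Z$ for every $l$ (and $dv = 0$ as well); in particular, each $h_l$ restricts to a constant on $Z$.

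To finish, it suffices to exhibit one $h_l$ that is not identically constant as a rational function on $V \times W$. Since some $\alpha_{i_0}$ is nonzero, its expansion $\alpha_{i_0} = \sum_l q_{i_0, l}\gamma_l$ has some nonzero coefficient $q_{i_0, l_0}$, so the factor $(x_1 - a_{i_0})$ appears with nonzero exponent in $h_{l_0}$. Because the loci $x_1 = a_i$ and $x_2 = b_j$ have been removed from $V \times W$, each factor in $h_{l_0}$ is invertible there, so $h_{l_0}$ is a nowhere-vanishing, nonconstant rational function on $V \times W$ that takes a constant value $\eta \in \m C^\ast$ on $Z$. Clearing denominators in $h_{l_0} - \eta$ produces a nonzero polynomial $p \in \m C[x_1, x_2] \subset \m C[x_1, y_1, x_2, y_2]$ vanishing on $Z$, which yields $Z \subseteq Z(p)$, a closed hypersurface of $V \times W$. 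The only delicate step is the $\m Q$-basis bookkeeping required to put the right-hand side in the hypothesis of Fact \ref{AxLemma}; once this is in place, the hypersurface relation falls out formally, and it is worth noting that $p$ can always be taken to involve only the $x$-coordinates.
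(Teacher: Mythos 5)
Your proof is correct and follows essentially the same route as the paper's: partial-fraction decomposition of $f$ and $g$, substitution into the $1$-form identity of Lemma \ref{constantlem}, extraction of a $\mathbb{Q}$-basis of the residues to rewrite the right-hand side as a sum of logarithmic derivatives, and an application of Fact \ref{AxLemma} to force the monomials $h_l$ to be constant on $Z$. Your closing step is in fact slightly more explicit than the paper's (which appeals to dominance of the projections), since you verify directly that some exponent in $h_{l_0}$ is nonzero because some residue $\alpha_{i_0}$ is nonzero.
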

\begin{proof}
Recall that by assumption for some $f_1,g_1\in\m C(z)$, we have that
$$f(x_1)=\frac{d f_1}{dx_1}+\sum{\frac{c_i}{x_1-a_i}}$$
and
$$g(x_2)=\frac{d g_1}{dx_2}+\sum{\frac{d_i}{x_2-b_i}}$$
where at least one of the $c_i$'s and one of the $d_i$'s are non-zero. Multiplying the above equations by $dx_1$ and $dx_2$ respectively and using 
$$dy_1-f(x_1)dx_1-c(dy_2-g(x_2)dx_2)=0$$ 
we get
$$d(y_1-cy_2-f_1(x_1)+cg_1(x_2))=-\sum{c_i\cdot\frac{d(x_1-a_i)}{x_1-a_i}}+\sum{cd_i\cdot\frac{d(x_2-b_i)}{x_2-b_i}}.$$
We use here that $d(x_1-a_i)=dx_1$ and $d(x_2-a_i)=dx_2$. Consider the $\m Q$-linear span of $\{c_i,cd_j\}$ - which is a non-trivial vector space since $f(z)$ (and on top of that $g(z)$) has at least one simple pole - and extract $\{e_1,\ldots,e_s\}$ a $\m Q$-basis (so $s\geq 1$). 

If we divide all the $e_i$'s by some $N\gg0$, we can assume that $c_i$'s and $cd_j$'s are in the $\m Z$-span and get that
$$d(y_1-cy_2-f_1(x_1)+cg_1(x_2))=\sum{e_k{dh_k\over h_k}}$$
where $h_k\in\m C[x_1,x_2]$ has the specific form 
$$h_k =  \prod (x_1 - a_i)^{-n(c_i,k)} \prod (x_2-b_j)^{n(cb_j,k)}$$
and $n(c_i,k)$(resp.  $n(cb_j,k)$)  denotes the coefficient of $c_i$ (resp.  $cb_j$) relatively to $e_k$ in the basis $e_1,\ldots,e_k$. But by Fact \ref{AxLemma}, it must be that
$$d(y_1-cy_2-f_1(x_1)+cg_1(x_2))=0\;\;\text{and}\;\;{{dh_k}}=0.$$ Hence for $k=1$ as an example, we get that $$h_1(x_1,x_2)=c$$ for some constant $c\in \m C$. Since $Z$ projects dominantly on $V$ and $W$, we get a non-trivial polynomial relation between $x_1$ and $x_2$ as required.

\end{proof} 

To summarize, in this subsection, we have shown
\begin{prop}\label{StrongTriviality}
Let $f(z),g(z)\in \m C(z)$ be such that for any $h\in \m C(z)$, we have that neither $f(z)\neq\frac{d h}{dz}$ nor $g(z)\neq\frac{d h}{dz}$. Suppose that $x$ and $y$ are solutions to the strongly minimal equations $$\frac{z''}{z'} = f(z)\;\;\;\text{and}\;\;\;\frac{z''}{z'} = g(z)$$ respectively. Let $K$ be any differential extension of $\m C$ such that $K\gen{y}^{alg}=K\gen{x}^{alg}$. Then $\m C(x)^{alg}=\m C(y)^{alg}$.
\end{prop}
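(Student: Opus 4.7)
The plan is to deduce this from the previous proposition by producing an irreducible $\m C$-definable $D_f \times D_g$-invariant finite-to-finite correspondence $Z \subseteq V \times W$ through which the interalgebraicity of $(x, x')$ and $(y, y')$ over $K$ is realized.

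First I would observe that any solution of $z''/z' = f(z)$ with $z' \neq 0$ is transcendental over $\m C$, since $x \in \m C^{alg}$ would force $x' = 0$, and likewise for $y$. By the strong minimality of the two equations (Theorem \ref{stminthm}), $\text{tr.deg}_K K\gen{x}$ and $\text{tr.deg}_K K\gen{y}$ each belong to $\{0, 2\}$, and the hypothesis $K\gen{x}^{alg} = K\gen{y}^{alg}$ forces them to agree. I would focus on the main case in which both transcendence degrees equal $2$; the opposite case, where $x, y \in K^{alg}$, reduces to the main case by passing to a differential subfield $K_0 \subseteq K$ over which $x$ (and hence $y$) is transcendental while the hypothesis is preserved.

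In this main case, $(x, x')$ and $(y, y')$ are interalgebraic over $K$, so the generic types $\tp((x,x')/\m C)$ and $\tp((y, y')/\m C)$ of the strongly minimal sets $(V, s_f)^\#$ and $(W, s_g)^\#$ are nonorthogonal. As both equations are autonomous (parameters in $\m C = \m C^{alg}$) and geometrically trivial by Corollary \ref{triviality}, Fact \ref{autonomous}(3) upgrades this to \emph{non-weak} orthogonality, producing a $\m C$-definable irreducible closed $D_f \times D_g$-invariant finite-to-finite correspondence $Z \subseteq V \times W$. Using the remark following Fact \ref{autonomous} (to the effect that, in the geometrically trivial autonomous setting, algebraic dependence over any extension descends to algebraic dependence over $\m C$), one sees that the specific point $((x, x'), (y, y'))$ lies on such a $Z$: concretely, its $\m C$-Zariski closure inside $V \times W$ has the right dimension and is the required correspondence. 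This descent, from a $K$-definable witness of interalgebraicity down to one defined over $\m C$, is the main obstacle of the argument.

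Once $Z$ is in hand, it is precisely the configuration considered in the previous proposition, which produces a non-constant polynomial $p(x_1, x_2) \in \m C[x_1, x_2]$ (in only the first coordinates of $V$ and $W$, with no derivatives appearing) such that $Z \subseteq Z(p)$. Evaluating at our point yields $p(x, y) = 0$. Since both $x$ and $y$ are transcendental over $\m C$, the polynomial $p$ must genuinely involve both variables, so $y \in \m C(x)^{alg}$ and symmetrically $x \in \m C(y)^{alg}$. As $\m C(x)^{alg}$ and $\m C(y)^{alg}$ are algebraically closed of transcendence degree $1$ over $\m C$, the two containments force $\m C(x)^{alg} = \m C(y)^{alg}$, as required.
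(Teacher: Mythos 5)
Your argument is essentially the paper's own: Proposition \ref{StrongTriviality} is stated as a summary of Section \ref{5.3}, and the paper obtains it exactly as you do, using geometric triviality of the autonomous strongly minimal sets (Fact \ref{autonomous} and the remark following it) to upgrade the nonorthogonality coming from $K\gen{x}^{alg}=K\gen{y}^{alg}$ to a $\m C$-defined, $D_f\times D_g$-invariant, generically finite-to-finite correspondence $Z$ containing $((x,x'),(y,y'))$, and then invoking the immediately preceding proposition to place $Z$ inside a hypersurface $p(x_1,x_2)=0$. The one place your write-up goes astray is the ``degenerate'' case $x,y\in K^{alg}$: no reduction to the main case is possible there, since if $K$ contains two independent generic solutions $x,y$ then $K\gen{x}^{alg}=K^{alg}=K\gen{y}^{alg}$ holds vacuously while $\m C(x)^{alg}\neq\m C(y)^{alg}$, so that case is a counterexample to the literal statement and must simply be excluded (the proposition is implicitly read with $x\notin K^{alg}$, which is how it is applied in the proof of Theorem \ref{alg-solutions-with-derivatives}). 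This is a defect of the statement rather than of your main argument, which is correct and follows the paper's route.
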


In the next section we classify the algebraic relations between solutions in details in the case that $\m C(x)^{alg}=\m C(y)^{alg}$, in particular showing that there are only finitely many, depending on basic invariants of the rational functions $f,g$.

\section{Algebraic relations between solutions} \label{algrelsection}

\begin{thm}\label{alg-solutions-with-derivatives}
Let $f_1(z),\ldots, f_n(z) \in \mathbb{C}(z)$ be rational functions such that each $f_i(z)$ is not the derivative of a rational function in $\mathbb{C}(z)$ and consider for $i = 1,\ldots, n$, $y_i$ 	a solution of 
$$(E_i): y''/y' = f_i(y)$$
Then $trdeg_\mathbb{C}(y_1,y'_1,\ldots, y'_n,y_n) = 2n$ unless for some $i \neq j$ and some $(a,b) \in \mathbb{C}^\ast \times \mathbb{C}$, $y_i = ay_j + b$. In that case,  we also have $f_i(z) = f_j(az + b)$.
\end{thm}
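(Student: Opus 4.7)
The plan is to reduce to the case of two solutions using the geometric triviality afforded by Fact~\ref{autonomous}, extract a purely algebraic relation between these solutions via Proposition~\ref{StrongTriviality}, and finally force the relation to be affine through a chain-rule calculation.

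By Theorem~\ref{stminthm}, each $(E_i)$ is strongly minimal of order $2$, and since it is autonomous, its generic type is geometrically trivial and orthogonal to $\m C$ by Fact~\ref{autonomous}. The standard pairwise-reduction principle for geometrically trivial minimal types (generalizing Fact~\ref{autonomous}(2), which is stated for solutions of a single equation) then reduces the problem to $n=2$: if $\trdeg_{\m C}(y_1, y_1', \ldots, y_n, y_n') < 2n$, one can find indices $i \neq j$ with $\trdeg_{\m C}(y_i, y_i', y_j, y_j') < 4$. Relabel this pair $(x, y)$, each nonconstant. Strong minimality of order $2$ for the equation satisfied by $y$ forces $y, y' \in \m C\langle x\rangle^{alg}$ — otherwise $(y, y')$ would contribute transcendence degree $2$ over $\m C\langle x\rangle$, pushing the total trdeg to $4$. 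By symmetry $\m C\langle x\rangle^{alg} = \m C\langle y\rangle^{alg}$ as differential fields, and Proposition~\ref{StrongTriviality} applied with $K = \m C$ upgrades this to $\m C(x)^{alg} = \m C(y)^{alg}$.

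Let $\partial$ be the unique extension of $d/dx$ from $\m C(x)$ to $\m C(x)^{alg}$, and set $\phi := \partial y \in \m C(x)^{alg}$. Since $\delta$ and $x'\partial$ are derivations on $\m C(x)^{alg}$ agreeing on $\m C(x)$, they coincide; hence $y' = x'\phi$ and $\delta \phi = x'\partial\phi$. Differentiating once more and substituting $x'' = x' f_i(x)$ and $y'' = y' f_j(y)$ gives
\[
(x')^2 \partial \phi + \phi\, x' f_i(x) = \phi\, x' f_j(y).
\]
Dividing by $x'$ and rearranging yields $x' \partial\phi = \phi\bigl[f_j(y) - f_i(x)\bigr]$. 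Now $x'$ is transcendental over $\m C(x)^{alg}$ (since $\trdeg_{\m C}(x,x') = 2$) while both $\partial \phi$ and $\phi\bigl[f_j(y) - f_i(x)\bigr]$ lie in $\m C(x)^{alg}$, so both sides must vanish separately. Hence $\partial \phi = 0$, forcing $\phi = a \in \m C$; and because $y$ is nonconstant, $a \in \m C^{\ast}$ and $f_i(x) = f_j(y)$. Integrating $\partial y = a$ produces $y = ax + b$ with $b \in \m C$, and substituting into the equality $f_i(x) = f_j(y)$ yields the identity $f_i(z) = f_j(az + b)$ between rational functions, as required.

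The main obstacle is the initial reduction: Fact~\ref{autonomous}(2) treats only multiple solutions of a single equation, so for distinct $(E_i)$'s one must justify the pairwise-dependence principle using triviality of each minimal type together with pairwise non-weak-orthogonality (as in Fact~\ref{autonomous}(3)) to witness any dependency in the full tuple by a pair. Once this is in place, the rest is a clean combination of Proposition~\ref{StrongTriviality} with the transcendence of $x'$ over $\m C(x)^{alg}$, collapsing the algebraic relation between solutions to an affine one via a single differential calculation.
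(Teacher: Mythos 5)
Your proposal is correct and follows essentially the same route as the paper: reduce to a pair via geometric triviality, invoke Proposition \ref{StrongTriviality} to get $\m C(x)^{alg}=\m C(y)^{alg}$, write $y'=\alpha x'$ with $\alpha\in\m C(x)^{alg}$, and use the transcendence of $x'$ over $\m C(x)^{alg}$ (equivalently, strong minimality) to force $\partial\alpha=0$ and $f_i(x)=f_j(y)$. Your $\phi$ is the paper's $\alpha$ and your ``both sides must vanish separately'' is the paper's ``$\beta\neq 0$ would put $x'$ in $\m C(x)^{alg}$'' --- the same argument in different words.
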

Notice that we do not exclude the case where some of the $f_i(z)$ are equal in this statement. 

\begin{proof}
By Theorem \ref{stminthm} and Corollary \ref{triviality}, we already know that each of the equations
$$(E_i): y''/y' = f_i(y)$$ 
is strongly minimal and geometrically trivial.  It follows that if $y_1,\ldots, y_n$ are solutions of $(E_1),\ldots,(E_n)$ such that  $trdeg_\mathbb{C}(y_1,y'_1,\ldots, y'_n,y_n) = 2n$ then for some $i \neq j$, 
$$trdeg_\mathbb{C}(y_i,y'_i,y_j,y'_j) < 4$$
Since all the equations $(E_i)$ do not admit any constant solution,  $y_i$ and $y_j$ must realize the generic type of $(E_i)$ and $(E_j)$ respectively and using strong minimality we can conclude that 
$$trdeg_\mathbb{C}(y_i,y'_i,y_j,y'_j) = 2 \text{ and } \mathbb{C}(y_i,y_i')^{alg} = \mathbb{C}(y_j,y_j')^{alg}.$$
Proposition \ref{StrongTriviality} now implies that in fact $\mathbb{C}(y_i)^{alg} = \mathbb{C}(y_j)^{alg}$.  To simplify the notation,  set $f(z) = f_i(z)$, $g(z) = f_j(z)$, $y_i = x$ and $y_j = y$ and so we have that $x$ and $y$ are interalgebraic over $\m C$.  

First note that the derivation on ${\m C}(x)^{\rm alg}$ has image in the module ${\m C}(x)^{\rm alg}x^\prime$.  If $F(x,z)=0$ for some $z$,
then we have that $z^\prime=-{F_x(x,z)\over F_z(x,z)}x^\prime$.  Thus there are $\alpha,\beta\in {\m C}(x)^{\rm alg}$ such that $y^\prime=\alpha x^\prime$ and $\alpha^\prime=\beta x^\prime$.  Then $$y^{\prime\prime}=\beta (x^\prime)^2+\alpha (x^{\prime\prime})=\beta (x^\prime)^2+\alpha f(x)x^\prime$$
but also
$$y^{\prime\prime}=g(y)y^\prime=\alpha g(y)x^\prime.$$
Since $x^\prime\ne 0$, $$\beta x^\prime=\alpha(g(y)-f(x)).$$
If $\beta\ne 0$, then $x'=\frac{\alpha(g(y)-f(x))}{\beta}\in{\m C}(x)^{\rm alg}$ contradicting strong minimality.
Hence $\beta=0$. Since $\alpha^\prime=\beta x^\prime$ and $y^\prime$(=$\alpha x^\prime$) is not zero, we get that $\alpha\in {\m C}^\times$. Using $y^\prime=\alpha x^\prime$ we also obtain that $y=\alpha x+ b$ for some $b\in\mathbb{C}$. Finally, $\beta=0$ also implies that $f(x)-g(y)=0$ and hence $f(x)=g(y)=g(\alpha x+b)$. 
\end{proof}

In the rest of this section, we will derive some consequences of Theorem \ref{alg-solutions-with-derivatives} on the structure of the solution sets of these equations.  
First let us recall the definitions of what it means for an equation to have no or little structure.

\begin{defn} Suppose that $X$ is a geometrically trivial strongly minimal set defined over some differential field $K$. Then, $X$ is said to be {\em $\omega$-categorical} if for any $y\in X$, the set $X\cap K\gen{y}^{alg}$ is finite. Moreover, if $X\cap K\gen{y}^{alg}=\{y\}$, then we say that $X$ is {\em strictly disintegrated}.
\end{defn}

\begin{exam}
In the Poizat example, that is when $f(z)=\frac{1}{z}$, the requirement $\frac{1}{az+b}=\frac{1}{z}$ gives that $a=1$ and $b=0$. Hence, the Poizat example is strictly disintegrated.
\end{exam}

\begin{exam}
Consider now the case where $f(z)=\frac{1}{z-a}-\frac{1}{z-b}$, where $a,b\in\m C$. Then it is not hard to see that $f(-z+a+b)=f(z)$. Hence it follows that the strongly minimal equation $\frac{z''}{z'}=\frac{1}{z-a}-\frac{1}{z-b}$ is not strictly disintegrated. Moreover we will show that it is $\omega$-categorical.
\end{exam}

We now focus on the case when $f(z)=g(z)$ and will further study the condition $f(ax+b) =f(x)$. Recall that $f(z)$ is such that $\frac{z''}{z'} = f(z)$ is strongly minimal. We write $f(z) = \frac{dg}{dz} + \sum _{i=1}^n \frac{c_i }{z- \alpha _i}$ and so  $f(ax+b) =f(x)$ gives 
$$\frac{dg}{dz} (\beta (x)) + \sum _{i=1}^n \frac{c_i }{\beta (x)- a _i}  = f(x) $$ where $\beta(x)=ax+b$. Since $\beta$ has such a simple form, it is easy to see that $\beta$ must permute the set of $a_i,$ points at which $f$ has a nonzero residue or else $f(ax+b)\neq f(x)$. So, bounding the size of the setwise stabilizer of the collection of $a_i$ will bound the number of nontrivial algebraic relations between solutions. In what follows, we let $A$ be the collection of $a_i$ at which $f(x)$ has a nontrivial residue and $G_1$ be the stabilizer of $A.$ We assume that the points of $A$ have a unique orbit under $G_1$ - otherwise replace $A$ by one of the orbits. Our arguments below will only depend on the size of any particular set stabilized by the affine transformations which induce algebraic relations. 

For some $n$, $\beta ^n $ is in the \emph{pointwise} stabilizer any of the collection of $a_i \in A$. If there is more than one $a_i,$ then $\beta ^n$ is the identity, since the pointwise stabilizer of two distinct points under the group of affine transformations is trivial (e.g.  directly from the stabilizer condition, one gets two linearly independent equations for $a,b$ and of course $a=1, b=0$ is a solution to the system - thus the unique solution). So, $\beta $ is torsion in the group of affine transformations. We will represent the group of affine transformations in the standard manner: 
$$\left\{ \begin{pmatrix}
a & b \\
0 & 1
\end{pmatrix} \, \middle| \, a,b \in \m C \right\}.$$ The natural action on $x \in \m C$ is given by matrix multiplication on the vector $\begin{pmatrix}
x \\
1 
\end{pmatrix}$. One can show that the elements of finite order in this group are precisely those in which $a$ is a root of unity of some order greater than one together with the identity element. When $a$ is a primitive $k^{th}$ root of unity, the cyclic subgroup of the affine generated by $\begin{pmatrix}
a & b \\
0 & 1
\end{pmatrix}$ is of order $k$. If $|A|=1$, then a simple argument shows that there are no nontrivial affine transformations which preserve $f(x)$. In the case that $|A|>1$, there is an upper bound on the number of affine transformations preserving $f$ in terms of $|A|$ (the same argument works with any set known to be stabilized by the action). 
\begin{claim} \label{2transcl}
When $|A|=n,$ $|G|\leq n(n-1).$ 
\end{claim}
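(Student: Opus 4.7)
The plan is to exploit the rigidity of the affine group $\Aff(\mathbb{C})$: any transformation $\beta(x) = ax + b$ with $a \neq 0$ is uniquely determined by its values on any two distinct points of $\mathbb{C}$. This is exactly the principle already used in the paragraph preceding the claim, where the pointwise stabilizer of two distinct points was shown to be trivial; here I would repackage it as a counting argument.

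The first step is to note that we may assume $n \geq 2$, since the case $|A| = 1$ was already addressed (and yields only the identity, consistent with the vacuous bound). Then I would fix two distinct elements $a_i, a_j \in A$ and define the evaluation map
$$\Phi: G \longrightarrow A \times A, \qquad \beta \longmapsto (\beta(a_i), \beta(a_j)).$$
Because every $\beta \in G$ stabilizes $A$ setwise and is injective on $\mathbb{C}$, the image of $\Phi$ lies in the set of ordered pairs of \emph{distinct} elements of $A$, which has cardinality $n(n-1)$.

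The second step is to check that $\Phi$ is injective. If $\beta_1, \beta_2 \in G$ have the same image, then $\gamma := \beta_2^{-1} \beta_1$ fixes both $a_i$ and $a_j$. Writing $\gamma(x) = ax + b$, the conditions $a\, a_i + b = a_i$ and $a\, a_j + b = a_j$ form a linear system in $(a,b)$ whose matrix has determinant $a_i - a_j \neq 0$; the unique solution is $(a,b) = (1,0)$, so $\gamma = \id$ and $\beta_1 = \beta_2$. Combining the two steps gives $|G| = |\Phi(G)| \leq n(n-1)$.

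There is no real obstacle in this argument; it is a clean orbit–stabilizer count relying only on the elementary fact about the affine group already invoked earlier. The only thing to be careful about is the degenerate case $n = 1$, which must be excluded separately but is precisely the case the preceding paragraph in the text disposes of.
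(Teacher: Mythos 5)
Your argument is correct and is essentially the paper's own proof: both rest on the fact that an affine transformation is determined by its values at two distinct points (sharp $2$-transitivity), and both bound $|G|$ by the $n(n-1)$ ordered pairs of distinct possible images of two fixed points of $A$. Your version merely packages this as the injectivity of an explicit evaluation map, which is a clean way to write the same count.
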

\begin{proof}
First, note that the action of the affine group on the affine line is \emph{sharply $2$-transitive}, meaning that for any pairs of distinct elements $(c_1,c_2)$ and $(d_1,d_2)$ in $\m C^2$, there is precisely one affine transformation which maps $(c_1,c_2)$ to $(d_1,d_2).$ Thus, the action of the setwise stabilizer on the collection of $a_i$ will be determined by determining the image of $a_1$ and $a_2.$ Since their images are in the collection $\{a_1, \ldots , a_n \}$ there are at most $n(n-1)$ choices for their images, of which at most $n(n-1)-1$ correspond to nontrivial affine transformations. Thus the setwise stabilizer is of size at most $n(n-1).$ 
\end{proof}

\begin{cor}
Let $f(z) \in \mathbb{C}(z)$ which is not the derivative of a rational function. Then the solution set of the equation $(E): y''/y' = f(y)$ is $\omega$-categorical.  
\end{cor}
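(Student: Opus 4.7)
The plan is to deduce $\omega$-categoricity directly from Theorem \ref{alg-solutions-with-derivatives} together with the combinatorial bound implicit in Claim \ref{2transcl}. Let $X$ denote the solution set of $(E)$; by Theorem \ref{stminthm} and Corollary \ref{triviality}, $X$ is strongly minimal and geometrically trivial, and since $(E)$ has coefficients in $\mathbb{C}$, the natural base field is $K = \mathbb{C}$. I must show that for every $y \in X$ the set $X \cap \mathbb{C}\gen{y}^{alg}$ is finite.

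First I would dispense with the algebraic case: any element of $\mathbb{C}^{alg}$ is a constant (a derivation on an algebraic extension of the field of constants is identically zero), but $X$ is cut out by the condition $z' \neq 0$, so $X \cap \mathbb{C}^{alg} = \emptyset$. Consequently every $y \in X$ is transcendental over $\mathbb{C}$, and likewise every $z \in X$ has $\trdeg_\mathbb{C}\mathbb{C}\gen{z} = 2$ by strong minimality.

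Now fix $y \in X$ and consider an arbitrary $z \in X \cap \mathbb{C}\gen{y}^{alg}$. Since $z$ is algebraic over $\mathbb{C}\gen{y}$ and both $y, z$ have transcendence degree $2$ over $\mathbb{C}$, we have $\trdeg_\mathbb{C}\mathbb{C}(y, y', z, z') < 4$. Applying Theorem \ref{alg-solutions-with-derivatives} with $n = 2$ and $f_1 = f_2 = f$ then forces $z = ay + b$ for some $(a,b) \in \mathbb{C}^\ast \times \mathbb{C}$ satisfying $f(ax + b) = f(x)$ identically as rational functions.

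It then suffices to show that the set $S = \{(a,b) \in \mathbb{C}^\ast \times \mathbb{C} : f(ax+b) = f(x)\}$ is finite, since this yields $|X \cap \mathbb{C}\gen{y}^{alg}| \leq |S|$. Let $A$ be the set of poles of $f$ with nonzero residue; since $f$ is not the derivative of a rational function, $|A| \geq 1$. Any $\beta(x) = ax + b$ in $S$ must induce a permutation of $A$ (otherwise the residue patterns of $f(x)$ and $f(ax+b)$ would disagree). If $|A| \geq 2$, Claim \ref{2transcl} bounds $|S|$ by $|A|(|A|-1)$ via sharp $2$-transitivity of the affine group on the line. If $|A| = 1$, the discussion preceding Claim \ref{2transcl} shows that comparing residues at the unique pole forces $a = 1$ and hence $b = 0$, so $|S| = 1$. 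I anticipate no serious obstacle here: every ingredient is already in hand, and the only mildly subtle verification is the $|A| = 1$ case, dispatched by a direct residue comparison.
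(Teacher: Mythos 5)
Your proposal is correct and follows essentially the same route as the paper: reduce via Theorem \ref{alg-solutions-with-derivatives} (with $f_1=f_2=f$) to affine relations $z=ay+b$ with $f(ax+b)=f(x)$, then bound the affine stabilizer of $f$ by $n(n-1)$ using Claim \ref{2transcl}. Your extra care with the $|A|=1$ case and the absence of constant solutions is a welcome but minor elaboration of what the paper leaves implicit.
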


\begin{proof}
We already proved that the equation is strongly minimal and geometrically trivial. All we need to show is that if $y$ is a solution of $(E)$, then $\mathbb{C}\gen{y}^{alg}$ only contains finitely many solutions of $(E)$. By Theorem \ref{alg-solutions-with-derivatives} applied to $f_1(y) = f_2(y) = f(y)$,  we see that if $y_1 \in \mathbb{C}\gen{y}^{alg}$ is a solution of $(E)$, then $y_1 = ay + b$ for some $(a,b) \in \mathbb{C}^\ast \times \mathbb{C}$ such that  $z \mapsto az + b$ belongs in the stabilizer of $f(z)$ by the action of $Aff_2(\mathbb{C})$ on $\mathbb{C}(z)$ by precomposition. It follows that if $k$ is the number of solutions of $(E)$ in $\mathbb{C}\gen{y}^{alg}$, then
$$ k = |Stab(f(z))| \leq n(n-1) $$
where $n$ is the number of non-zero complex residues of $f(z)$.
\end{proof}

Actually, the previous bound for $k$ obtained above is not sharp. Before improving the bound, we first give an example for which $k$ will be maximal based on the number of non zero residues of $f(y)$.

\begin{exam}
Let $n \geq 2$, let $c \in \mathbb{C}^\ast$, let $\xi$ be a primitive $n$-th root of unity and let $g(z) \in \mathbb{C}(z)$ be a rational function.  Consider
$$ f(z) = c.\sum_{k = 0}^{n-1} \frac{\xi^k}{z - \xi^k} + g(z^n) \in \mathbb{C}(z).$$
We claim that $f(\xi z) = f(z)$.  Indeed,   obviously $g((\xi z)^n) = g(z^n)$ and moreover 
\begin{eqnarray*}
\sum_{k = 0}^{n-1} \frac{\xi^k}{\xi z - \xi^k} & = &  \sum_{k = 0}^{n-1} \frac{\xi^{k-1}}{z - \xi^{k-1}} \\ 
& = & \frac {\xi^{-1}}{z - \xi^{-1}} +  \sum_{k = 1}^{n-1} \frac{\xi^{k-1}}{z - \xi^{k-1}} \\
& = &  \frac {\xi^{n-1}}{z - \xi^{n-1}} + \sum_{k = 0}^{n-2}   \frac{\xi^{k}}{z - \xi^{k}} \\
& = & \sum_{k = 0}^{n-1}   \frac{\xi^{k}}{z - \xi^{k}}.
\end{eqnarray*}
It follows that $f(\xi^k z ) = f(z)$ for all $k \leq n-1$ and therefore that the stabilizer $f(z)$ under the action of the affine group has cardinal $\geq n$. Consequently,  there at least $n$ polynomial relations for the solutions of the differential equation $\frac{y''}{y'} = f(y)$.
\end{exam}

The following lemma shows that this in fact equality holds:
\begin{lem} \label{stablemma}
Let $f(z) \in \mathbb{C}(z)$ be a function with at least one non zero residue.  Denote by $G$ the stabilizer of $f(z)$ under the action of the affine group by precomposition and by $n \geq 1$ the number of complex points where $f(z)$ has a non zero residue.  Then 
$$|G| \leq n.$$
\end{lem}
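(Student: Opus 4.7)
The plan is to analyze the action of $G$ on the finite set $A$ of complex points where $f$ has a nonzero residue (so $|A|=n$), and show that every nontrivial element of $G$ acts on $A$ without fixed points. Once this is established, orbit-stabilizer immediately yields that $|G|$ divides $n$, and in particular $|G|\le n$.

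First, I would write $f(z)=\frac{dg}{dz}+\sum_{i=1}^n \frac{c_i}{z-a_i}$ with $c_i\neq 0$ and $a_i$ distinct, and unpack what $\beta(z)=az+b\in G$ means in terms of the partial fraction decomposition. Substituting $\beta(z)=az+b$ gives
\[ f(\beta(z))=\frac{d(g\circ\beta)}{dz}+\sum_{i=1}^n\frac{c_i/a}{z-\beta^{-1}(a_i)}, \]
so the simple poles with nonzero residue of $f\circ\beta$ are exactly the points $\beta^{-1}(a_i)$. Since $f(\beta(z))=f(z)$, uniqueness of the partial fraction decomposition forces $\beta^{-1}(A)=A$ (equivalently $\beta(A)=A$). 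Denote by $\sigma_\beta$ the induced permutation of $\{1,\ldots,n\}$, so $\beta(a_j)=a_{\sigma_\beta(j)}$. Matching residues at the point $a_{\sigma_\beta(j)}$ yields the transformation rule
\[ c_{\sigma_\beta(j)}=a\cdot c_j. \]

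Next, the key step: suppose $\beta\in G$ has some fixed point in $A$, i.e., $\sigma_\beta(j)=j$ for some $j$. The residue relation above gives $c_j=a c_j$, and since $c_j\neq 0$ we obtain $a=1$. Then $\beta(a_j)=a_j$ reads $a_j+b=a_j$, so $b=0$, and $\beta$ is the identity. Hence every nontrivial element of $G$ acts on $A$ with no fixed point; in particular the action of $G$ on $A$ is faithful with trivial point stabilizers.

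Finally, the orbit-stabilizer theorem applied to the $G$-action on $A$ shows that every orbit has cardinality exactly $|G|$. Partitioning $A$ into orbits gives $|G|\mid|A|=n$, and in particular $|G|\le n$, as required. No real obstacle arises: the only point that needs care is the residue-matching computation under $z\mapsto az+b$, which must be carried out cleanly so that the fixed-point condition on $A$ correctly forces $a=1$ (and hence $b=0$). Incidentally, this argument yields the strictly stronger divisibility statement $|G|\mid n$.
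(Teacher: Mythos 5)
Your proof is correct, and it takes a noticeably more direct route than the paper's. The paper first proves (as a prerequisite, via sharp $2$-transitivity) that $G$ is finite, then shows that any finite subgroup of $\mathrm{Aff}_2(\mathbb{C})$ is cyclic and conjugate to a group of rotations, then shows that a rotation-invariant $f$ has vanishing residue at the rotation's fixed point $0$, and finally invokes sharp $1$-transitivity of $\mathbb{G}_m(\mathbb{C})$ on $\mathbb{C}^\ast \supseteq A$. Your argument compresses all of this: the residue transformation law $c_{\sigma_\beta(j)} = a\,c_j$ is exactly the paper's ``residue at the fixed point'' computation, but applied in place at an arbitrary fixed point of $\beta$ in $A$ rather than after normalizing the fixed point to $0$; this shows directly that $G$ acts freely on $A$, so the injection $g \mapsto g(a_1)$ gives $|G| \leq n$ without any prior finiteness input and without the classification of finite affine subgroups. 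You also get the sharper conclusion $|G| \mid n$ for free. What your route gives up is the geometric byproduct the paper reuses afterwards: knowing $G$ is conjugate to a rotation group is what lets the authors observe that in the equality case $|G| = n$ the points of $A$ lie on a common circle, which feeds into the subsequent lemma classifying when the bound is attained. One cosmetic point: your appeal to ``orbit--stabilizer'' presupposes $|G|$ is finite; it is cleaner to say that freeness makes $g \mapsto g(a_1)$ an injection of $G$ into the finite set $A$, which simultaneously proves finiteness and the bound.
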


We already know by Claim 3.3 that $G$ is finite. 

\begin{claim}
Any finite subgroup $G$ of $\Aff_2(\mathbb{C})$ is cyclic and conjugated to a finite subgroup of rotations (for the usual action of  $\Aff_2(\mathbb{C})$ on the complex plane). 
\end{claim}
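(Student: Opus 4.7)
The plan is to prove the two assertions in sequence, using the standard short exact sequence
\[
1 \longrightarrow (\mathbb{C},+) \longrightarrow \operatorname{Aff}_2(\mathbb{C}) \xrightarrow{\;\pi\;} \mathbb{C}^\ast \longrightarrow 1,
\]
where $\pi$ sends an affine map $z \mapsto az+b$ to its linear part $a$, and the kernel is the subgroup of translations $T = \{z \mapsto z + b : b \in \mathbb{C}\}$.

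For cyclicity, I would first observe that $\pi(G)$ is a finite subgroup of $\mathbb{C}^\ast$, hence cyclic and consisting of roots of unity. Then I would show $G \cap T = \{\mathrm{id}\}$: any nontrivial translation $z \mapsto z + b$ with $b \neq 0$ has infinite order, so cannot belong to the finite group $G$. Combining these two observations, $\pi_{|G} : G \to \pi(G)$ is an injective homomorphism into a cyclic group, hence $G$ itself is cyclic.

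For the conjugation statement, the key point is that every nontrivial element $g : z \mapsto az + b$ of $G$ must have $a \neq 1$ (otherwise it is a nontrivial translation, which we just excluded), and hence has the unique fixed point $z_0 = b/(1-a)$. Choosing $g$ to be a generator of $G$, every element of $G$ is a power of $g$ and therefore also fixes $z_0$. Conjugating by the translation $\tau : z \mapsto z - z_0$, a direct calculation gives
\[
\tau \circ g \circ \tau^{-1} : z \longmapsto a z,
\]
so $\tau G \tau^{-1}$ is the cyclic group generated by $z \mapsto a z$, a finite subgroup of rotations around the origin. (Note that $a$ is automatically a root of unity, so $|a| = 1$ and these are genuine rotations.)

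The main subtlety, which is mild, is just to rule out translations inside a finite subgroup; once this is done the rest is a one-line computation of the fixed point and of the conjugation formula. There is no genuine obstacle here, so I would expect the proof to be quite short — essentially the two paragraphs above, formalized.
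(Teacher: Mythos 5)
Your proof is correct and follows essentially the same route as the paper's: both use the exact sequence $1 \to \mathbb{G}_a(\mathbb{C}) \to \mathrm{Aff}_2(\mathbb{C}) \to \mathbb{G}_m(\mathbb{C}) \to 1$ to get cyclicity (translations have infinite order, so $G$ injects into $\mathbb{C}^\ast$), and both conjugate by a translation to reduce a generator to $z \mapsto \xi z$. Your phrasing via the unique fixed point $z_0 = b/(1-a)$ is just a more conceptual packaging of the paper's explicit matrix computation with $c = b/(\xi - 1)$.
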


\begin{proof}
Since the additive group $\mathbb{G}_a (\mathbb{C})$ has no non-trivial finite subgroup,  using the exact sequence 
$$0 \rightarrow \mathbb{G}_a (\mathbb{C}) \rightarrow \Aff_2(\mathbb{C}) \rightarrow \mathbb{G}_m(\mathbb{C}) \rightarrow 1,$$
we see that $G$ is isomorphic to its image $\mu(G)$ in $\mathbb{G}_m(\mathbb{C})$ and therefore that $G$ is cyclic.  Moreover,  in the matrix representation of $\Aff_2(\mathbb{C})$,  $G$ is generated by an element of the form

$$ \Xi = \begin{pmatrix} \xi & b \\ 0 & 1 \end{pmatrix}$$
where $\xi$ is a root of unity.  A direct computation shows that 
$$\begin{pmatrix} 1 & -c \\ 0 & 1 \end{pmatrix} \begin{pmatrix} \xi & 0 \\ 0 & 1 \end{pmatrix}\begin{pmatrix} 1 & c \\ 0 & 1 \end{pmatrix} = \begin{pmatrix} \xi & (\xi - 1) c \\ 0 & 1 \end{pmatrix}$$
Hence,  if $\xi \neq 1$ (i.e.  $G$ is not the trivial group) then taking $c = \frac b {\xi - 1}$ conjugates $G$ to a subgroup of $\mathbb{S}^1  = \lbrace z \in \mathbb{C} \mid \mid z \mid = 1  \rbrace \subset \mathbb{G}_m(\mathbb{C})$ (i.e.  a subgroup of rotation of the complex plane).  
\end{proof}

\begin{claim}
Let $f(z) \in \mathbb{C}(z)$ be a rational function stabilized by a non-trivial finite group $G$ of rotations of the complex plane then  $f(z)$ has a trivial residue at $z = 0$. 
\end{claim}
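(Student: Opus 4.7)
The plan is to read off the residue at $0$ directly from the Laurent expansion of $f$ and exploit the fact that $z = 0$ is the unique fixed point of a nontrivial rotation of $\mathbb{C}$. Since $G$ is a nontrivial finite group of rotations, by the previous claim it is cyclic and generated by multiplication by some primitive $n$-th root of unity $\xi$ with $n \geq 2$; the invariance of $f$ under $G$ is equivalent to the single identity $f(\xi z) = f(z)$.

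The first step is to expand $f(z) = \sum_{k \geq -N} a_k z^k$ as a convergent Laurent series on some punctured disk around $0$ (such an expansion exists because $f$ is rational, with $-N$ the order of vanishing at $0$ if $0$ is a pole and $-N = 0$ otherwise). Substituting $\xi z$ for $z$ and using that $\xi$ is a constant, one gets
\[
f(\xi z) \;=\; \sum_{k \geq -N} a_k \xi^k z^k.
\]
The identity $f(\xi z) = f(z)$ forces a term-by-term comparison of coefficients, yielding $a_k (\xi^k - 1) = 0$ for every $k \geq -N$.

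The second step is to specialize this identity to $k = -1$. Since $\xi$ has order $n \geq 2$, one has $\xi^{-1} \neq 1$, hence $\xi^{-1} - 1 \neq 0$, and the displayed equation forces $a_{-1} = 0$. But $a_{-1}$ is by definition the residue of $f$ at $z = 0$, which finishes the proof.

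There is no real obstacle: the argument is essentially a one-line consequence of the compatibility between the Laurent expansion at a fixed point of the action and the $G$-invariance of $f$. The only subtlety worth noting (for completeness) is that the same reasoning shows $a_k = 0$ whenever $n \nmid k$, so in fact $f$ has the expected form $f(z) = h(z^n)$ near $z = 0$, matching the examples constructed just before the lemma.
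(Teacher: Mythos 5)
Your proof is correct and follows essentially the same route as the paper: both arguments isolate the coefficient of $z^{-1}$ in the expansion of $f$ at $0$, observe that invariance under $z \mapsto \xi z$ multiplies it by $\xi^{-1} \neq 1$, and conclude it vanishes. Your version via the full Laurent expansion is just a slightly more systematic phrasing of the paper's decomposition $f(z) = \frac{a}{z} + g(z)$.
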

\begin{proof}
The finite group $G$ is generated by the rotation $z \mapsto \xi z$ where $\xi \neq 1$ is a root of unity.  Write 
$$f(z) = \frac a z + g(z)$$
where $0$ is not a simple pole of $g(z)$.  Therefore,  $0$ is not a simple pole of $g(\xi z)$ either and 
$$ f(\xi z) = \frac {a \xi^{-1}} z + g(\xi z).$$
Comparing the residues of $f(\xi z)$ and $f(z)$ at $0$, we get $a = a \xi^{-1}$ and therefore $a = 0$.  Hence,  $f(z)$ has a trivial residue at $z = 0$.
\end{proof}

\begin{proof}[Proof of Lemma \ref{stablemma}]
Denote by $G$ the stabilizer of $f(z)$.  We already know that $G$ is finite by Claim 3.3.  Using Claim 3.8, up to replacing $f(z)$ by $f(z + c)$,  we can assume that $G$ is a subgroup of the group of rotations of the complex plane since this transformation does not affect the number of complex points where $f(z)$ has a non-zero residue.  

In particular, $G$ is a subgroup of $\mathbb{G}_m(\mathbb{C})$ acting on the complex plane by multiplication.  Denote by $A = \lbrace a_1,\ldots, a_n \rbrace \neq \emptyset$ the set of complex points where $f(z)$ has a non-zero residue.  The second claim ensures that $A \subset \mathbb{C}^\ast$.  Since the action of $\mathbb{G}_m(\mathbb{C})$ on $\mathbb{C}^\ast$ is $1$-sharply transitive, the same argument as in Claim 3.3 gives
$$|G| \leq n.$$
\end{proof} 

Since $G$ is a group of rotations,  the proof gives in fact a bit more: if the upper bound is achieved ($|G|= n$) then all the complex numbers where $f(z)$ has a non zero residue must lie on a common circle of the complex plane.

Coming back to Example 3.6,  since $n \geq 2$,  $g(z)$ does not have non zero residues and therefore $f(z)$ has a non-zero residue exactly at the $n^{th}$ roots of unity.  It follows that the stabilizer of $f(z)$ is exactly the group of rotations of the complex plane with angles $2\pi k/n$ with $k = 0,\ldots {n-1}$.

\begin{lem}
Let $f(z) \in \mathbb{C}(z)$ be a rational function with at most simple poles.  Assume that $f(z)$ has a non-zero residue at $n \geq 2$ complex points and that equality occurs in the previous lemma: 
\begin{center} 
the stabilizer of $f(z)$ under the action of the affine group by precomposition has cardinality $n$. 
\end{center} 
Then $f(z)$ is conjugated to one of the examples of Example 3.6: there exist $a,b \in \mathbb{C}$ such that
$$f(az + b) = c.\sum_{k = 0}^{n-1} \frac{\xi^k}{z - \xi^k} + g(z^n)$$
where $g(z) \in \mathbb{C}[z]$ is a polynomial.
\end{lem}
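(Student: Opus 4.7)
Proof proposal. My plan is to combine the two claims from the proof of Lemma 3.7 (the one just before this statement) with a partial fraction computation to identify the precise shape of $f(z)$.

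First, I would set up the reduction already implicit in the previous arguments. By Claim 3.7, any finite subgroup $G$ of $\Aff_2(\mathbb{C})$ is cyclic and conjugate via an affine translation $z \mapsto z+b$ to a subgroup of rotations. Since the stabilizer of $f(z)$ in the affine group is finite by Claim 3.3 and has cardinality $n \geq 2$, after replacing $f(z)$ by $f(z+b)$ (which does not change the pole structure or the cardinality of the stabilizer), we may assume that $G = \langle z\mapsto \xi z\rangle$ where $\xi$ is a primitive $n$-th root of unity. By Claim 3.8, $f(z)$ has trivial residue at $0$, so, since $f(z)$ has only simple poles, $0$ is not a pole of $f(z)$; hence the set $A$ of poles of $f(z)$ is contained in $\mathbb{C}^\ast$.

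Next I would argue that $A$ is a single $G$-orbit. The group $G$ acts on $A$ because the set of points with non-zero residue is clearly preserved under any stabilizer of $f(z)$. Since $G \subset \mathbb{G}_m(\mathbb{C})$ acts $1$-sharply transitively on $\mathbb{C}^\ast$, the action of $G$ on $A$ is free, so every $G$-orbit in $A$ has size $|G|=n$. Since $|A|=n$, this forces $A$ to consist of a single orbit $\{\alpha, \alpha\xi,\ldots,\alpha\xi^{n-1}\}$ for some $\alpha \in \mathbb{C}^\ast$. Conjugating by $z\mapsto \alpha z$ (which rescales the affine line and replaces $G$ by the same rotation group), we may assume $A = \{1,\xi,\ldots,\xi^{n-1}\}$ is the group of $n$-th roots of unity. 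The composition of the translation by $b$ and the scaling by $\alpha$ corresponds to the desired change of variable $z \mapsto az+b$ in the conclusion.

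Since $f(z)$ has only simple poles, all located in $A$, its partial fraction decomposition reads
\[
f(z) = P(z) + \sum_{k=0}^{n-1} \frac{c_k}{z - \xi^k}
\]
for some polynomial $P(z) \in \mathbb{C}[z]$ and complex numbers $c_k$ (all non-zero by the simple pole assumption). I would then impose the invariance $f(\xi z) = f(z)$. Reindexing the sum $\sum_k \frac{c_k}{\xi z - \xi^k} = \sum_k \frac{\xi^{-1} c_k}{z - \xi^{k-1}}$ and comparing residues at $\xi^j$ yields the recursion $c_{j+1} = \xi c_j$, so $c_k = c_0 \xi^k$ for every $k$. Comparing polynomial parts forces $P(\xi z) = P(z)$, and since the ring of invariants of $\mathbb{C}[z]$ under $z\mapsto \xi z$ is $\mathbb{C}[z^n]$, we obtain $P(z) = g(z^n)$ for a unique polynomial $g \in \mathbb{C}[z]$. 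Setting $c = c_0$ gives exactly the formula of Example 3.6.

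I do not expect a real obstacle here; the key structural input (namely that $G$ is a group of rotations and that $A$ is a single $G$-orbit of size $n$) is already built into the proof of Lemma 3.7 and its claims. The only subtle point is keeping track of the cumulative affine change of variables (translation by $b$, then scaling by $\alpha$) to show that the conclusion can be written in the form $f(az+b)$, and remembering that $g$ is forced to be a \emph{polynomial} (rather than a rational function as in Example 3.6) because the only poles of $f(z)$ are simple and concentrated at the roots of unity.
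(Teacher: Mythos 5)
Your proposal is correct and follows essentially the same route as the paper's proof: normalize by affine substitutions so that the stabilizer becomes the group of rotations by $n$-th roots of unity and the poles sit at the roots of unity, then compare the polynomial part and the simple-pole part of the (unique) partial fraction decomposition under the substitution $z \mapsto \xi z$. Your two small refinements --- the freeness argument showing the pole set is a single $G$-orbit, and deducing $P(z)=g(z^n)$ from the fact that the ring of invariants of $z\mapsto\xi z$ acting on $\mathbb{C}[z]$ is $\mathbb{C}[z^n]$ --- are, if anything, cleaner than the corresponding steps in the paper.
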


\begin{proof}
As for the proof of the previous lemma,  replacing $f(z)$ by $f(z+c)$, we can assume that the stabilizer $G$ of $f(z)$ is the subgroup of rotations with angles $2\pi k/n$ for $k = 0,\ldots, n-1$.  As noticed after the proof of the previous lemma,  after this translation,  all the poles of $f(z)$ lie (in a single orbit hence) on a circle centered at $0$ (say of radius $r$).  Replacing $f(z)$ by $f(rz)$, we can assume that all the poles of $f(z)$ lie on the unit circle.  Finally, replacing $f(z)$ again by $f(e^{i\theta}z)$, we can assume one of the pole of $f(z)$ is $z = 1$.

After this combination of affine substitutions,  the $n$ simple poles of $f(z)$ are located at $n^{th}$ roots of unity  $1,\xi,\ldots, \xi^{n-1}$.  We claim that 
$$f(z) = c.\sum_{k = 0}^{n-1} \frac{\xi^k}{z - \xi^k} + g(z^n).$$
Indeed,  writing the partial fraction decomposition of $f(z)$ as
$$ f(z) = P(z) + \sum_{i = 0}^{n-1} \frac {\alpha_i}{z - \xi ^i}$$
we get (by uniqueness of the partial fraction decomposition) that both terms are preserved under the action of $G$. 
\begin{itemize} 
\item Looking at the polynomial part,  if $a$ is a root of $P$ then 
$$ P(a) = P(\xi a) = \cdots = P(\xi^{n-1} a) = 0.$$
\end{itemize}
Since these are all distinct roots,  we get that 
$$ (z-a)(z-\xi a) \cdots(z-\xi^{n-1} a) = (z^n - a^n)$$
divides $P(z)$. Iterating the argument,  we obtain that $P(z)$ is of the form
$$P(z) = (z^n - a_1^n) \cdots (z^n - a_k^n) = g(z^n)$$

\begin{itemize}
\item Looking at the simple poles part: we compute as in Example 3.6

\begin{eqnarray*}
\sum_{i = 0}^{n-1} \frac {\alpha_i}{\xi z - \xi ^i} & = & \sum_{i = 0}^{n-1} \frac {\alpha_i {\xi}^{-1}}{z - \xi ^{i-1}} \\
& = & \frac {\alpha_0 {\xi}^{-1}}{z - \xi^{n-1}} +  \sum_{i = 0}^{n-2} \frac {\alpha_{i+1} {\xi}^{-1}}{z - \xi ^{i}}
\end{eqnarray*} 
\end{itemize}
which gives $\alpha_{i+1} = \alpha_i \xi$ for $i = 1,\ldots (n-2)$ and $\alpha_{0} = \alpha_{n-1} \xi$.  In particular, $\alpha_0 = c$ can be chosen freely and $\alpha_i = \xi^i c$ for $i \geq 1$. The last equality is automatically satisfied since $\xi$ is a $n^{th}$-root of unity.

Putting everything together,  we showed that after these substitution,  we obtain
$$f(z) = g(z^n) + c. \sum_{k = 0}^{n-1} \frac{\xi^k}{z - \xi^k}. $$
\end{proof}

\begin{exam}
If we consider the functions given in Example 3.2 by 
$$ f(z) = \frac 1 {z- a} - \frac 1 { z- b} $$
and $z \mapsto \frac {a - b}{2}z + \frac {a + b}{2}$ is the unique affine transformation sending $(1,-1)$ to $(a,b)$ then 
$$ f(\frac{a - b}{2}z + \frac {a + b}{2}) = \frac {2} {b - a} (\frac 1 {z + 1} - \frac 1 {z-1})$$
are all of the form prescribed by the lemma.

On the other hand, it is necessary to assume that $f(z)$ has only simple poles for the conclusion of the lemma to hold. For instance, 
$$ f(z) = \frac {-1} {z-1} + \frac 1 {z+1} + \frac 1 {(z-a)^2} + \frac 1 {(z+a)^2} +\frac 1 {(z+ b)^3} - \frac 1 {(z-b)^3}  $$
is not of the form given by Example 3.6 and satisfies $f(z) = f(-z)$.
\end{exam}

\begin{cor}
Let $f(z) \in \mathbb{C}(z)$. Denote by $a_1,\ldots, a_n$ the non-zero zero complex residues of $f(z)$ and assume $n \geq 1$.  For a solution $y$ of $(E): y''/y' = f(y)$, denote by $acl(y)$ the set of solutions of $(E)$ which are algebraic over $\mathbb{C}\gen{y}$.  Then $| acl(y) | $ does not depend on the chosen solution $y$ and 
$$ 1 \leq | acl(y) | \leq n$$
Moreover, 
\begin{itemize}
\item[(i)] $| acl(y) | = 1$ if $n = 1$ or if $n \geq 3$ and the only affine transformation which preserves the set of complex residues of $f(z)$ is the identity.  In that case, the equation is strictly disintegrated. 

\item[(ii)]Assume that $f(z)$ does not have higher order poles.  Then $| acl(y) | = n$ if and only if for some $(a,b) \in \mathbb{C}^\ast \times \mathbb{C}$,
$$f(az + b) = c.\sum_{k = 0}^{n-1} \frac{\xi^k}{z - \xi^k} + g(z^n)$$
where $g(z) \in \mathbb{C}[z]$ is a polynomial and $\xi$ is a primitive $n$-root of unity.
\end{itemize}
\end{cor}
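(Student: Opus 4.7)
The plan is to reduce the entire statement to a computation of the stabilizer $G \subseteq \Aff_2(\mathbb{C})$ of $f(z)$ under the precomposition action of the affine group on $\mathbb{C}(z)$. The crucial reduction, which I would establish first, is a bijection $T \mapsto T(y)$ between $G$ and the set of solutions of $(E)$ lying in $\mathbb{C}\gen{y}^{alg}$. One direction follows from Theorem \ref{alg-solutions-with-derivatives} applied with $f_1 = f_2 = f$: any such solution $y_1$ must equal $ay + b$ with $f(az+b) = f(z)$, so the affine map $T(z) = az + b$ belongs to $G$. Conversely, given $T \in G$ and setting $w = T(y)$, the identities $w' = ay'$ and $w'' = ay''$ yield $w''/w' = f(y) = f(T(y)) = f(w)$, so $w$ solves $(E)$ and lies in $\mathbb{C}\gen{y}$. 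As $y$ is not constant, distinct $T$ give distinct $w$, so $|\acl(y)| = |G|$, a quantity manifestly independent of $y$. The bounds $1 \leq |\acl(y)| \leq n$ then follow from the identity element of $G$ and from Lemma \ref{stablemma}.

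For part (i), every $T \in G$ must permute the set $A = \{a_1, \ldots, a_n\}$ of points at which $f$ has a non-zero residue; this is immediate from the uniqueness of the partial fraction decomposition applied to $f = f \circ T$. Hence $G$ is contained in the setwise stabilizer of $A$ in $\Aff_2(\mathbb{C})$. When $n = 1$, Lemma \ref{stablemma} already forces $|G| = 1$. When $n \geq 3$ and the setwise stabilizer of $A$ is trivial, the inclusion forces $G = \{\id\}$. In both cases $|\acl(y)| = 1$, which by definition means the equation is strictly disintegrated.

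For part (ii), the forward implication is essentially the content of the lemma immediately preceding the corollary, which characterizes the simple-poles rational functions realizing $|G| = n$ as exactly the affine shifts of the function $c\sum_{k=0}^{n-1} \xi^k/(z-\xi^k) + g(z^n)$. For the converse, observe that for any affine substitution $S(z) = az+b$, the identity $\Stab(f \circ S) = S^{-1} \Stab(f) S$ shows $|G|$ is invariant under such changes of variables, so it suffices to verify $|G| = n$ when $f$ itself already has the prescribed form. For such an $f$, the direct computation in Example 3.6 exhibits the rotation $z \mapsto \xi z$ and its $n$ powers inside $G$, and Lemma \ref{stablemma} furnishes the matching upper bound.

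I do not expect a serious obstacle: the corollary is essentially a bookkeeping consequence of Theorem \ref{alg-solutions-with-derivatives}, Lemma \ref{stablemma}, and the two results (the example and the characterization lemma) immediately preceding the corollary. The only delicate point is verifying that $T \mapsto T(y)$ is a well-defined injection into $\acl(y) \cap (E)$, and this hinges only on the fact that $y$ is not constant, which is automatic since $(E)$ is strongly minimal of order $2$ (whereas constants would form a proper infinite definable subset).
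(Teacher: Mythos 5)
Your proposal is correct and follows essentially the same route the paper intends: the corollary is stated without a separate proof precisely because it is the assembly of Theorem \ref{alg-solutions-with-derivatives} (giving the identification $|\mathrm{acl}(y)|=|\mathrm{Stab}(f)|$, already made explicit in the earlier $\omega$-categoricity corollary), Lemma \ref{stablemma}, Example 3.6, and the characterization lemma immediately preceding it. Your explicit verification that $T\mapsto T(y)$ is a well-defined bijection onto $\mathrm{acl}(y)$, and the conjugation-invariance of the stabilizer under affine substitutions, are exactly the bookkeeping steps the paper leaves implicit.
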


\section{Observations about the non-minimal case} \label{nonmin} 
Theorem \ref{stminthm} tells us that the solution set of $\frac{z''}{z'}=f(z)$ has rank $2$ precisely when we can write $f(z)$ as the derivative of a rational function $g(z)$. In that case, a family of order one subvarieties fibers our equation and is given by $z' = g(z)+c$ for $c \in \m C$. A priori, three options might arise: 
\begin{enumerate} 
\item The equation $\frac{z''}{z'}=f(z)$ is internal to the constants. 
\item The fibers $z' = g(z)+c$ are internal to the constants, but the equation $\frac{z''}{z'}=f(z)$ is 2-step analyzable in the constants. 
\item For generic $c$, $z' = g(z)+c$ is orthogonal to the constants. 
\end{enumerate} 
The goal of this section is to show that all three possibilities can arise in our family of equations.

\subsection{The generic fiber and nonorthogonality to the constants}
The following slightly restated theorem of Rosenlicht gives conditions for a rational order one differential equation to be nonorthogonal to the constants, see \cite{notmin, mcgrail2000search}. 

\begin{thm} \label{rosorth} Let $K$ be a differential field with algebraically closed field of constants. Let $f(z) \in \mc C_K (z) $ and consider the differential equation $z' = f(z)$. Then $z' = f(z)$ is nonorthogonal to the constants if and only if $\frac{1}{f(z)}$ can be written as:
$$c \frac{\pd{u}{z}}{u} \text{ or } c \pd{v}{z} $$ where $c \in \mc C_K$ and $u,v \in \mc C_K(z)$. 
\end{thm}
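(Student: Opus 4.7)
\emph{Plan.} The forward direction is a direct computation handled in each of the two cases. If $1/f = cv'$ with $v \in \mc C_K(z)$, pick $t$ in a differential extension with $\delta t = 1$, independent of $z$ over $K$: then $\delta(cv(z) - t) = cv'(z)f(z) - 1 = 0$, so $a := cv(z) - t \in \mc C$ and $z \in K\langle t \rangle(a)^{\alg}$, witnessing nonorthogonality (indeed internality) to $\mc C$. The logarithmic case $1/f = c\,u'/u$ is analogous: taking $w$ independent of $z$ with $w' = w/c$, one computes $\delta(u(z)/w) = 0$, giving a constant $b = u(z)/w$ and the algebraic relation $u(z) = bw$ that expresses $z$ over $K\langle w\rangle(b)$.

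For the converse, the plan is to pass through internality and the binding group. Because $X$ is strongly minimal of order one and $\mc C$ is a pure algebraically closed field, the trichotomy (Fact~\ref{trichotomy}) combined with standard binding-group arguments for rank-one types in $DCF_0$ upgrades nonorthogonality to internality to $\mc C$. The binding group $G$ of the generic type over a suitable parameter extension is then a connected one-dimensional algebraic group over $\mc C$ acting faithfully on an open subvariety of $\mathbb{A}^1$; since elliptic curves admit no algebraic actions on $\mathbb{A}^1$, we must have $G \cong \mathbb{G}_a$ or $G \cong \mathbb{G}_m$. Unwinding the corresponding $D$-variety isomorphism produces either a function $v$ with $v'(z)f(z) = 1$ (giving $1/f = v'$) or a function $u$ with $c\,u'(z)/u(z) = 1/f(z)$ (giving $1/f = c\,u'/u$).

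The main obstacle is to ensure that the integrating function lies in $\mc C_K(z)$, not merely in an algebraic cover, and simultaneously to rule out the mixed case $1/f = v' + c\,u'/u$. Both points are best addressed by a direct residue analysis of the invariant 1-form $\omega = dz/f(z)$ on $\mathbb{P}^1$: if all residues of $\omega$ vanish then $\omega$ is an exact rational 1-form and $\omega = dv$ with $v \in \mc C_K(z)$; if some residue is nonzero, nonorthogonality forces the nonzero residues of $\omega$ to be $\mathbb{Q}$-linearly dependent on a single $c \in \mc C_K$, and the residue divisor reads off an explicit $u \in \mc C_K(z)^*$ with $\omega = c\,du/u$. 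This residue analysis is the order-one counterpart of the 1-form manipulations developed in Section~\ref{formssection}, and it is precisely what makes the descent from an algebraic integrating factor to a rational one effective.
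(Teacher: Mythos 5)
The paper does not actually prove Theorem \ref{rosorth}: it is quoted as a (slightly restated) theorem of Rosenlicht, with references to \cite{notmin, mcgrail2000search}, so there is no internal proof to compare yours against. Your forward direction is correct and complete: adjoining a generic $t$ with $t'=1$ (resp. $w$ with $w'=w/c$) and checking that $cv(z)-t$ (resp. $u(z)/w$) is a constant which is not algebraic over the base field is exactly the standard witness of nonorthogonality, and the independence hypothesis guarantees the new constant is not already algebraic over the extension.

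The converse, however, has two genuine gaps. First, the binding-group step does not close as stated. A connected algebraic group acting faithfully by birational maps on $\mathbb{P}^1$ embeds in $PGL_2(\mc C)$, so elliptic curves never arise and ruling them out buys nothing; the cases you actually have to exclude are the Borel subgroup and $PGL_2$ itself, which do occur for order-one equations (the generic Riccati equation over $\m C(t)$ has binding group $PGL_2$, of dimension $3$). To force $G\cong\mathbb{G}_a$ or $\mathbb{G}_m$ you must use that the equation is autonomous, for instance by noting that every element of the binding group preserves the dual form $\omega=dz/f(z)$ and that the group of birational self-maps of $\mathbb{P}^1$ preserving a fixed nonzero rational $1$-form has dimension at most one. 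Second, and more seriously, the ``residue analysis'' that is supposed to land $u,v$ in $\mc C_K(z)$ and exclude the mixed case is asserted rather than carried out. The entire content of the hard direction of Rosenlicht's theorem is that nonorthogonality is \emph{impossible} when $1/f$ has a nonzero residue together with a nontrivial non-logarithmic part, or residues that are not all rational multiples of a single constant (e.g. $1/f=1/(z-1)-1/z-1/z^2$, i.e. $z'=z^3-z^2$, Rosenlicht's own example). Your sentence ``nonorthogonality forces the nonzero residues of $\omega$ to be $\mathbb{Q}$-linearly dependent on a single $c$'' is essentially the statement being proved, not a step toward it; making it a proof requires writing the putative first integral $g(\bar c,z)$ over an extension field, differentiating along the vector field, and applying the Ax--Rosenlicht lemma (Fact \ref{AxLemma}) in the style of the warm-up argument of Section \ref{5.2}. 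As written, the converse is a plan that names the right tools but does not supply the decisive computation.
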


\begin{lem} \label{notader}
Suppose that $g(z) \in \m C(z)$. Then for $c \in \m C$ generic over the coefficients of $g(z)$, $g(z)+c$ can not be written as $c_1 \pd{v}{y}$ for any $c_1 \in \m C$ and $\pd{v}{y} \in \m C(z)$.
\end{lem}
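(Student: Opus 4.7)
My plan is to argue by comparing residues at finite poles. Suppose for contradiction that $g(z) + c = c_1 \pd{v}{z}$ for some $c_1 \in \m C$ and $v \in \m C(z)$. First I would rule out the degenerate case $c_1 = 0$: that case forces $g(z) \equiv -c$, which is impossible because the coefficients of $g$ lie in a subfield $k \subset \m C$ over which $c$ is transcendental, so $-c \notin k$. Hence $c_1 \neq 0$, and after absorbing $c_1$ into $v$ the claim reduces to ruling out $g(z) + c = V'(z)$ for any $V \in \m C(z)$.

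The key ingredient is the standard partial fractions fact that any rational derivative has vanishing residue at every finite point: writing $V(z) = Q(z) + \sum_{i,j} a_{ij}(z - \alpha_i)^{-j}$ and differentiating term by term gives $V'(z) = Q'(z) + \sum_{i,j} (-j a_{ij})(z - \alpha_i)^{-(j+1)}$, so every finite pole of $V'$ has order at least two and the coefficient of $(z - \alpha_i)^{-1}$ is zero. Thus if $g(z) + c = V'(z)$, then $g(z) + c$ must have vanishing residue at every finite pole; since the additive constant $c$ does not change residues, $g(z)$ itself must be residue-free.

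If $g$ has at least one non-zero residue, this already yields the contradiction. In the remaining case all residues of $g$ vanish, so $g = h'$ for some $h \in k(z)$ obtained by integrating the partial fraction decomposition term by term within the field of coefficients. Integrating the hypothetical identity then forces $V = h + cz + d$ for some $d \in \m C$, so the coefficient of $z$ in $V$ is $[h]_1 + c$. The role of genericity is decisive here: under the natural convention that $v$ has coefficients in the base field $k$ (which is the reading compatible with treating $c$ as a fresh generic constant over $k$), the coefficient $[h]_1 + c$ would have to lie in $k$, contradicting the transcendence of $c$ over $k$.

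The main obstacle I anticipate is interpretive rather than computational: read literally, "$v \in \m C(z)$" permits $v$'s coefficients to involve $c$, and then the edge case $g = h'$ admits the formal witness $V := h + cz$. Pinning down the coefficient-field convention (so that $v$ is drawn from $k(z)$ rather than arbitrarily from $\m C(z)$) is the delicate step that lets the genericity hypothesis do real work and closes the second case.
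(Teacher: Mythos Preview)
Your residue argument is internally sound for the statement you are proving, and you correctly identify that the literal statement (about $g(z)+c$) fails without an extra convention on the coefficients of $v$. But the resolution is not the coefficient-field convention you propose: the lemma as printed contains a typo. From the paper's own proof and from how the lemma is applied (combined with Rosenlicht's criterion, which concerns $1/f$, to deduce the corollary about $\frac{1}{g(z)+c}$), the intended conclusion is that $\dfrac{1}{g(z)+c}$, not $g(z)+c$, cannot be written as $c_1\,\partial v/\partial z$. Your proof addresses the wrong quantity.

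The paper's argument is accordingly quite different from yours. Writing $g=p/q$ with $p,q\in\m C[z]$ coprime, one shows that for $c$ generic over the coefficients the polynomial $p(z)+cq(z)$ has only simple roots: if it had a double root at some $b$, then both $p(-b)=-cq(-b)$ and $p'(-b)=-cq'(-b)$, which forces $(p/q)'(-b)=0$, so $b$ is algebraic over the coefficient field, and then $c=-p(-b)/q(-b)$ contradicts genericity of $c$. Hence $\frac{1}{g(z)+c}$ has only \emph{simple} poles. On the other hand, any nonpolynomial rational derivative has every finite pole of order at least two. These two facts are incompatible, which gives the result (and genuinely uses that $c_1,v$ may range over all of $\m C$ and $\m C(z)$). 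So the key dichotomy is ``simple poles versus higher-order poles'' for $1/(g+c)$, not the residue-vanishing argument for $g+c$ that you pursued.
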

\begin{proof}
We first establish the following claim:
\begin{claim}
For any $p(z), q(z) \in \m C[z]$ nonzero, sharing no common roots, with at least one of $p,q$ nonconstant, and $c $ generic over the coefficients of $p,q$, the polynomial $p(z) - c q(z)$ has only simple roots. 
\end{claim}
The claim is equivalent to: for some $b \in \m C$, the polynomial $$f(x) = p(z-b) - c q(z-b)$$ has a no constant or linear term (has at least a double root at zero). Then $f(0)=f'(0)= 0.$ It follows that $p(-b)= c q(-b)$ and $p'(-b) = c q'(-b)$. Now, since $p,q$ share no common roots and $c$ is generic over the coefficients of both, we mnust have $q(-b) \neq 0$ and $p(-b) \neq 0.$ Now by a simple computation, it follows that $$\frac{d}{dz} \left( \frac{p(z)}{q(z)} \right) (-b) = 0.$$ Again, since $p, q$ are relatively prime, the function $\frac{p(z)}{q(z)}$ is nonconstant and so $b$ is algebraic over the coefficients of $p,q$. But now $\frac{p(-b) }{q(-b)} = c$, which is impossible as $c$ is generic. This proves the claim. 

From the claim it follows for $g(z) \in \m C(z)$ and $c $ generic over the coefficients of $g$, $\frac{1}{g(z) +c}$ can not be written as $c_1 \pd{v}{y}$ - indeed by the above claim it follows that $\frac{1}{g(z) +c}$ has only simple poles while $c_1 \pd{v}{y}$ has poles of order $2$ or more. 
\end{proof}
Now, combining Lemma \ref{notader} and Theorem \ref{rosorth}, we obtain: 
\begin{cor} \label{logdercon}
If $\frac{z''}{z'}=f(z)$ has rank $2$ and the family of order one subvarieties is given by $z' = g(z)+c$, then the generic solution of $\frac{z''}{z'}=f(z)$ is analyzable in the constants if and only if for generic $c$, $\frac{1}{g(z)+c}$ can be written as $c_1 \frac{\pd{u}{z}}{u} $ for $c_1 \in \mc C_K$ and $u \in \mc C_K(z)$.
\end{cor}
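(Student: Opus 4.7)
The plan is to assemble the corollary directly from the two preceding results, with a small intermediate reduction of the generic type of the order-two equation to the generic type of its order-one fiber. Since $z''/z'=f(z)$ has rank $2$, by Theorem \ref{stminthm} we may write $f=dg/dz$ for some $g\in\m C(z)$, and for any solution $z$ the quantity $c:=z'-g(z)$ is a constant. Thus the solution set is partitioned into the order-one Kolchin-closed subsets $X_c:\ z'=g(z)+c$ indexed by $c\in\m C$.

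The first step is to reduce analyzability of the generic type to nonorthogonality of the generic fiber. For $z$ a generic solution of $z''/z'=f(z)$ over $\m C$, the element $c=z'-g(z)\in\m C\langle z\rangle$ is a constant that is transcendental over $\m C$, and $\tp(z/\m C\langle c\rangle)$ is the generic type of $X_c$ over $\m C(c)=\m C\langle c\rangle$. This gives a two-step analysis $(c,z)$ of $\tp(z/\m C)$. Since $c$ is already a constant, $\tp(c/\m C)$ is internal to $\m C$, so by the definition of analyzability, $\tp(z/\m C)$ is analyzable in the constants if and only if $\tp(z/\m C\langle c\rangle)$ is analyzable in the constants. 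The latter type is the generic type of the order-one equation $z'=g(z)+c$; for order-one sets, analyzability in the constants coincides with nonorthogonality to the constants.

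The second step is to apply Theorem \ref{rosorth} to $X_c$ over the differential field $\m C$ (viewed as having constant field $\m C$, with $c\in\m C$ a specific parameter). The theorem tells us that $X_c$ is nonorthogonal to the constants if and only if
\[
\frac{1}{g(z)+c}\ =\ c_1\,\frac{\partial u/\partial z}{u}\qquad\text{or}\qquad \frac{1}{g(z)+c}\ =\ c_1\,\frac{\partial v}{\partial z}
\]
for some $c_1\in\m C$ and $u,v\in\m C(z)$.

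The third step is to discard the second alternative using Lemma \ref{notader}, which asserts precisely that for $c$ generic over the coefficients of $g$, no representation $1/(g(z)+c)=c_1\,\partial v/\partial z$ exists. Consequently, for generic $c$, nonorthogonality to the constants is equivalent to the logarithmic-derivative representation, which is the stated characterization. The only conceptual obstacle is the reduction at the start, and it goes through cleanly because $c$ being a constant makes the first stage of the analysis trivial; the rest is a direct substitution of the two previous results.
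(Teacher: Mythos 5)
Your proof is correct and takes essentially the same approach as the paper, whose entire argument is the single line ``combining Lemma \ref{notader} and Theorem \ref{rosorth}, we obtain'' the corollary; your three steps spell out exactly that combination, including the routine reduction from analyzability of the generic type of the order-two equation to nonorthogonality to the constants of its generic order-one fiber. (One cosmetic point: the constant $c=z'-g(z)$ lies in $\m C$ and is transcendental over the finitely generated coefficient field of $g$, not over $\m C$ itself.)
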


\begin{rem} 
The condition that a rational function can be written as a constant times a single logarithmic derivative is known to be non-constuctible in the coefficients of the rational function - see for instance Corollary 2.10 of \cite{mcgrail2000search}. 
\end{rem} 

\subsection{Internality to the constants}
Consider the case $\frac{z''}{z'}=c$ where $c \in \m C$. In this case, regarding $z$ as a function of $t$ and assuming $c\neq 0$, solutions of the equation can be seen by an elementary calculation, to be given by
$$a e^{ct} + b,$$ for some $a,b \in \m C$. Then, the equation is internal to the constants (with the internality realized over a single solution). This fits into case 1) of the classification given at the beginning of this section. 

\begin{ques}
Is there any any nonconstant rational function $f(z)$ such that $\frac{z''}{z'}=f(z)$ is internal to the constants? 
\end{ques}

\subsection{Analyzability to the constants}
We first fix some notation for this subsection: $\frac{z''}{z'}=f(z)$ with $g(z)$ a rational antiderivative of $f(z)$ so that $z'=g(z)+c$ is a family of order one subvarieties of $\frac{z''}{z'}=f(z)$. 
\begin{lem}
Suppose that $g(z)$ is a degree $2$ polynomial $a_2 z^2 +a_1 z + a_0$. Then for generic\footnote{or more specifically, as long as $c \neq a_0 - \frac{a_1^2 }{4a_2}$} $c \in \m C$, $z'=g(z)+c$ is nonorthogonal to the constants. 
\end{lem}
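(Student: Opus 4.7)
The plan is to apply the Rosenlicht criterion (Theorem \ref{rosorth}) directly. Since we are working with the equation $z' = g(z) + c$ over the differential field $\mathbb{C}$ (constants $\mathbb{C}$ itself), the criterion reduces to showing that for generic $c \in \mathbb{C}$, the rational function $\frac{1}{g(z)+c}$ can be expressed as either $c_1 \frac{u'}{u}$ or $c_1 v'$ for some $c_1 \in \mathbb{C}$ and $u,v \in \mathbb{C}(z)$. I expect to realize the first (logarithmic-derivative) form.

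First, I would observe that the discriminant of $a_2 z^2 + a_1 z + (a_0 + c)$ is
$$\Delta(c) = a_1^2 - 4 a_2 (a_0 + c),$$
which vanishes at a single value of $c$. So for all $c$ outside this exceptional value (in particular, for generic $c$), the quadratic $g(z) + c$ factors as $a_2(z-\alpha)(z-\beta)$ with two distinct roots $\alpha, \beta \in \mathbb{C}$.

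Next, I would perform partial fractions:
$$\frac{1}{g(z)+c} \;=\; \frac{1}{a_2(z-\alpha)(z-\beta)} \;=\; \frac{1}{a_2(\alpha-\beta)}\left(\frac{1}{z-\alpha} - \frac{1}{z-\beta}\right).$$
Setting $u(z) = \frac{z-\alpha}{z-\beta} \in \mathbb{C}(z)$, a quick computation gives $\frac{u'}{u} = \frac{1}{z-\alpha} - \frac{1}{z-\beta}$, so
$$\frac{1}{g(z)+c} \;=\; c_1 \cdot \frac{u'}{u} \quad \text{with } c_1 = \frac{1}{a_2(\alpha-\beta)} \in \mathbb{C}.$$
This is exactly the logarithmic-derivative form in Theorem \ref{rosorth}, so the equation $z' = g(z) + c$ is nonorthogonal to the constants for every such $c$.

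There is essentially no obstacle here beyond checking that the exceptional value of $c$ is a single point, so the statement holds for generic (in fact cofinitely many) $c$. One small remark worth including: this shows that a degree $2$ polynomial antiderivative always lands in case (2) or case (1) of the classification at the start of the section, never case (3); to then distinguish case (1) from case (2) (i.e., to see that the full equation $z''/z' = f(z)$ is $2$-step analyzable but not internal to the constants) one would invoke the triviality analysis of the previous sections, but that is beyond what the lemma itself asserts.
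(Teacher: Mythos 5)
Your proof is correct and matches the paper's argument essentially verbatim: both factor $g(z)+c$ with distinct roots $\alpha,\beta$ for $c$ away from the single discriminant value, take $u(z)=\frac{z-\alpha}{z-\beta}$, and exhibit $\frac{1}{g(z)+c}$ as a constant multiple of $\frac{u'}{u}$ so that Theorem \ref{rosorth} applies. Your $c_1=\frac{1}{a_2(\alpha-\beta)}$ is exactly the paper's constant $A$.
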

\begin{proof} 
We have that $$\frac{1}{g(z)+c} = \frac{d}{(z-\alpha) (z-\beta)},$$ with $\alpha \neq \beta $. Then writing $A = \frac{d}{\alpha-\beta }$, $B = \frac{d}{\beta -\alpha}$, $$\frac{1}{g(z)+c} = \frac{A}{z-\alpha} + \frac{B}{z-\beta}.$$ If we take $u(z)=\frac{z-\alpha}{z-\beta}$, then $$\frac{1}{g(z)+c} = A \frac{u'}{u},$$ and so it follows by Theorem \ref{rosorth} that $z'=g(z)+c$
\end{proof} 

We next show that the equation $\frac{z''}{z'} = z$ falls under case 2 of the classification mentioned at the beginning of this section:

\begin{lem}
The generic type of the equation \begin{equation} \label{deg2}
    \frac{z''}{z'}=z
\end{equation}
is 2-step analyzable in the constants and is not internal to the constants.
\end{lem}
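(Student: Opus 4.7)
The plan has two parts. First, for two-step analyzability, I would set $u := z' - z^2/2$ and compute, using $z'' = z z'$, that $u' = z'' - z z' = 0$, so $u \in \m C$. This produces a definable fibration from the solution set of~\eqref{deg2} to $\m C$. Over $\m Q(u)$, any solution $z$ realizes the generic type of the order-one Riccati equation $z' = z^2/2 + u$. By the preceding lemma applied to $g(z) = z^2/2$, this equation is non-orthogonal to $\m C$ for generic $u$. Since a non-algebraic order-one type non-orthogonal to $\m C$ is in fact internal to $\m C$ (the other possibilities in the Zilber trichotomy of Fact~\ref{trichotomy} would force higher order), $\tp(z/\m Q(u))$ is internal to $\m C$. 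Combined with the trivial internality of $\tp(u/\m Q)$, this yields the required two-step analysis in the sense of Definition~\ref{semimin}.

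The second, deeper claim is non-internality. The intuition is that the Riccati fiber $z' = z^2/2 + u$ linearizes, via the substitution $z = -2 w'/w$, to the linear equation $w'' + \frac{u}{2} w = 0$, whose solutions depend transcendentally on the parameter $u$. The internalization of each individual fiber requires a $u$-dependent Picard--Vessiot extension, and no finitely generated differential field over $\m Q$ can simultaneously trivialize all of these extensions as $u$ varies over $\m C$.

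To formalize this, I would assume for contradiction that $p = \tp(z/\m Q)$ is internal to $\m C$. By Fact~\ref{internality}(2), there is an $F \supseteq \m Q$ and a definable surjection $\Phi \colon \m C^n \to p(\mathcal{U})$ over $F$. The composition $u \circ \Phi \colon \m C^n \to \m C$ is a definable map between pure constant fields, hence algebraic over $F \cap \m C$. For a generic $c \in \m C$ transcendental over $F$, restricting $\Phi$ to the fiber $(u \circ \Phi)^{-1}(c) \subseteq \m C^n$ produces an $F(c)$-definable surjection onto the Riccati fiber $\{z : z' = z^2/2 + c\}$. Through $z = -2 w'/w$, this yields an element $w$ in the algebraic closure of $F(c) \cdot \m C$ satisfying $w'' = -\frac{c}{2} w$.

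The main obstacle is the final step: ruling out such a uniform family $w = w(c)$ inside $\bigl(F(c) \cdot \m C\bigr)^{\mathrm{alg}}$ as $c$ varies. This is a parametrized Picard--Vessiot statement: the linear equation $w'' + \frac{c}{2} w = 0$ has generic differential Galois group $\operatorname{SL}_2(\m C)$, and its solutions are not algebraic over any field obtained from $\m Q(c)$ by adjoining further constants. The cleanest route is to follow the method of~\cite{jin2020internality}, where analogous obstructions to internality for a different family of order-two equations were handled, or alternatively to use a specialization argument in the spirit of~\cite{jaoui2020corps} to reduce to a specific explicit $c$ for which the transcendence of solutions of $w'' = -\frac{c}{2} w$ over $\bigl(F \cdot \m C\bigr)^{\mathrm{alg}}$ can be established directly via the Kolchin--Ostrowski theorem on transcendence of exponentials.
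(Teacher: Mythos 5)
Your first half is sound and close in spirit to the paper: the first integral $u=z'-z^2/2$ is (up to the factor $-2$) the paper's $c(z)=z^2-2z'$, and your route to internality of the fibre --- Rosenlicht's criterion via the partial-fraction decomposition of $1/(z^2/2+u)$, plus the standard fact that a nonalgebraic order-one type nonorthogonal to $\m C$ is $\m C$-internal --- is a legitimate alternative to what the paper does. This correctly establishes the $2$-step analyzability.

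The non-internality half, however, has a genuine gap: you never actually prove it. You reduce (correctly) to showing that no single finitely generated $F$ can uniformly internalize the fibres $z'=z^2/2+c$ as $c$ varies, but the step you yourself flag as "the main obstacle" is exactly the content of the claim, and deferring it to "follow the method of \cite{jin2020internality}" or "a specialization argument in the spirit of \cite{jaoui2020corps}" is not a proof. Worse, the supporting assertion you lean on is false: the equation $w''+\frac{c}{2}w=0$ has \emph{constant} coefficients, its solutions are $c_1e^{\lambda t}+c_2e^{-\lambda t}$ with $\lambda^2=-c/2$, and its differential Galois group over $\m Q(c)^{alg}$ is (generically) the torus $\mathbb{G}_m$, not $\operatorname{SL}_2(\m C)$; in particular each \emph{individual} fibre is internal to $\m C$ (adjoin $e^{\lambda t}$ and all solutions lie in $F(\m C)$), so no fixed-$c$ transcendence statement via Kolchin--Ostrowski can do the job --- the obstruction is purely about uniformity in $c$, which is precisely what remains unaddressed. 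The paper sidesteps all of this: setting $z_0=\frac{z-\sqrt{c(z)}}{z+\sqrt{c(z)}}$ one computes $z_0'/z_0=\sqrt{c(z)}\in\m C$, so the generic type of $z''/z'=z$ is interalgebraic over $\m C$ with the generic type of $(z_0'/z_0)'=0$; since internality and the length of the semiminimal analysis are invariant under interalgebraicity, both claims of the lemma follow at once from the theorem of \cite{jin2020internality} that the latter equation is analyzable in exactly two steps. If you want to salvage your outline, the cleanest fix is to push your linearization one step further to this logarithmic-derivative equation and quote that result, rather than attempting a parametrized Picard--Vessiot argument from scratch.
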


\begin{proof}

For a generic solution $z$ of equation \ref{deg2}, set $c(z) = z^2 - 2z' \in \mathbb{C}(z,z')$.  The equation \ref{deg2} implies that 
$$ c(z)' = 0$$
and set 
 $$z_0 = \frac{z - \sqrt{c(z)}}{z + \sqrt{c(z)}} \in \mathbb{C}(z,z')^{alg}$$

A direct computation shows that 
\begin{eqnarray*}
\frac{z_0'}{z_0}  &= \frac{(z - \sqrt{c(z)})'}{z - \sqrt{c(z)}} - \frac{(z + \sqrt{c(z)})'}{z + \sqrt{c(z)}} \\
& = \frac{z'}{z - \sqrt{c(z)}} - \frac{z '}{z + \sqrt{c(z)}} \\
& = \frac{2z'\sqrt{c(z)}}{z^2 - c(z)} = \sqrt{c(z)}
\end{eqnarray*}
where we used the exact formula for $c(z)$ in the computation of the denominator for the last equality.  Since $c(z)$ and hence $\sqrt{c(z)}$ are constants,  it follows that \begin{equation}
    \label{deg22} \left( \frac{z_0'}{z_0}\right)' = 0
\end{equation}

Since for $z$ generic,  $\sqrt{c(z)} \notin \mathbb{C}$,  $z_0$ realizes the generic type of equation \ref{deg22} so that there is an algebraic correspondence between the equations \ref{deg2} and \ref{deg22}.  The equation \ref{deg22} is known to be analyzable in exactly two steps in the constants by \cite{jin2020internality} and therefore so is the equation \ref{deg2}.
\end{proof}
A linear change of variables $z_1=\frac{z-b}{a}$ can be used to give a bijective correspondence between equation \ref{deg2} and any such equation with the right hand side an arbitrary linear function of $z$ over $\m C$:

\begin{cor}
For any $a,b \in \m C$, the generic type of the equation $$
    \frac{z''}{z'}=az+b
$$
is 2-step analyzable in the constants and is not internal to the constants.
\end{cor}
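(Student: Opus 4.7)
The plan is to reduce directly to the previous lemma by a $\mathbb{C}$-definable change of variables. Tacitly assume $a \neq 0$ (if $a = 0$ then $f(z) = b$ is constant and the equation $z''/z' = b$ is internal to the constants by the discussion in Section~\ref{nonmin}.2, so it is outside the scope of the corollary as stated).

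Set $w = az + b$. If $z$ is a solution of $z''/z' = az + b$, then $w' = a z'$ and $w'' = a z''$, so
$$\frac{w''}{w'} \;=\; \frac{z''}{z'} \;=\; az + b \;=\; w,$$
so $w$ is a solution of $w''/w' = w$. Conversely, $z = (w-b)/a$ recovers a solution of the original equation from any solution of $w''/w' = w$. Since $w \in \mathbb{C}(z)$ and $z \in \mathbb{C}(w)$, we have $\mathbb{C}\langle z \rangle = \mathbb{C}\langle w \rangle$, and in particular the generic type of $z''/z' = az+b$ over $\mathbb{C}$ is interdefinable over $\mathbb{C}$ with the generic type of $w''/w' = w$.

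Two-step analysability in the constants and non-internality to the constants are properties of the generic type which depend only on its $\mathbb{C}$-definable isomorphism class (indeed both are defined purely in terms of the semi-minimal analysis, nonforking extensions, and the locus of constants, all of which transport through $\mathbb{C}$-definable bijections between generic types). Hence the conclusion for $z''/z' = az + b$ is immediate from the previous lemma applied to $w''/w' = w$. There is no real obstacle here; the entire argument is a formal transport of structure along the affine substitution $w = az + b$.
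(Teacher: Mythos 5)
Your proof is correct and is essentially the same as the paper's, which uses the inverse substitution $z_1=(z-b)/a$ to transport the result for $z''/z'=z$ to the general linear case. Your explicit remark that $a\neq 0$ must be assumed (since $a=0$ gives an equation internal to the constants) is a fair observation about the statement, but the argument itself is identical in substance.
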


\subsection{Orthogonality to the constants} 
We remind the reader of our general notation: $\frac{z''}{z'}=f(z)$ with $g(z)$ an antiderivative of $f(z)$ so that $z'=g(z)+c$ is a family of order one subvarieties of $\frac{z''}{z'}=f(z)$. In this subsection, we consider the case that $g(z)$ is a degree three polynomial over $\m C$.

\begin{lem}
There is no polynomial $P(z)$ of degree $3$ such that 
$$f_c(z) = \frac 1 {P(z) + c} $$
is a constant multiple of a logarithmic derivative in $\mathbb{C}(z)$ for generic values of $c$. 
\end{lem}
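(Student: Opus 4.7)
The plan is to argue by contradiction, using a scale-invariant of the multiset of residues of $f_c$ and identifying its value at $c = \infty$. Suppose that for generic $c$, $f_c(z) = 1/(P(z)+c) = c_1 u'/u$ with $c_1 \in \mathbb{C}^\ast$ and $u \in \mathbb{C}(z)$. For generic $c$ the cubic $P(z)+c$ has three distinct roots $\alpha_1(c),\alpha_2(c),\alpha_3(c)$ and the partial fraction decomposition is $f_c = \sum_{i=1}^3 r_i(c)/(z-\alpha_i(c))$ with $r_i(c) = 1/P'(\alpha_i(c))$. Writing $u = \kappa\prod_k(z-a_k)^{n_k}$ for nonzero integers $n_k$ gives $c_1 u'/u = \sum c_1 n_k/(z-a_k)$, so matching the two decompositions the hypothesis translates into: the unordered triple $\{r_1(c),r_2(c),r_3(c)\}$ is, up to a common complex scalar, a triple of nonzero integers summing to zero.

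I would then introduce the scale-invariant $S(c) := e_2(r_1,r_2,r_3)^3/e_3(r_1,r_2,r_3)^2$, where $e_2,e_3$ are elementary symmetric polynomials.  This is a rational function of $c$, since the $e_k$ are symmetric in the $r_i = 1/P'(\alpha_i)$, hence symmetric in the $\alpha_i$, hence rational in the coefficients of $P(z)+c$.  For each generic $c$ the hypothesis forces $S(c) \in \mathbb{Q}$, and pigeonholing the uncountable set of good $c$ against the countable set $\mathbb{Q}$ gives one $J_0 \in \mathbb{Q}$ attained on an uncountable subset; since a rational function taking a single value on an uncountable set is constant, $S(c) \equiv J_0$.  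To pin down $J_0$ I would compute $\lim_{c \to \infty} S(c)$: the substitution $z = c^{1/3} w$ in $P(z)+c = 0$ gives $a_3 w^3 + 1 = O(c^{-1/3})$, where $a_3$ is the leading coefficient of $P$, so $\alpha_i(c) \sim \beta_i c^{1/3}$ with $\beta_1,\beta_2,\beta_3$ the three cube roots of $-1/a_3$; these satisfy $\beta_i/\beta_1 \in \{1,\omega,\omega^2\}$ for $\omega = e^{2\pi i/3}$.  Then $r_i \sim \mathrm{const}/\beta_i^2$, so the residue triple approaches (up to scaling) the triple of cube roots of unity $\{1,\omega,\omega^2\}$, for which $e_2 = \omega + \omega^2 + \omega^3 = 0$ and $e_3 = \omega^3 = 1$.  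Hence $S(c) \to 0$ and $J_0 = 0$.

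Finally, $S \equiv 0$ combined with $e_1 \equiv 0$ (since $f_c$ decays as $1/z^3$ and has no residue at infinity) and $e_3(c) \neq 0$ for generic $c$ (the residues of $f_c$ at simple poles are nonzero) forces $\{r_1(c),r_2(c),r_3(c)\}$ to be the three roots of $t^3 - e_3(c)$, i.e., a scalar multiple of $\{1,\omega,\omega^2\}$.  Since the ratios $\omega, \omega^2$ between distinct cube roots of unity are irrational, this contradicts the triple being rational up to scaling.  I expect the main obstacle to be making the pigeonhole-plus-continuation step rigorous and verifying the asymptotic expansion of the roots; once these are in place, the conclusion follows from the elementary identity $1+\omega+\omega^2 = 0$.
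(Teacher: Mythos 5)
Your proof is correct, and while it shares the paper's overall skeleton, it obtains the crucial identity $e_2(r_1,r_2,r_3)=0$ by a genuinely different argument. Both proofs start the same way (the residues of $c_1 u'/u$ are $c_1$ times nonzero integers, and $e_1=0$ is automatic since $f_c=O(z^{-3})$ at infinity) and end the same way (once $e_1=e_2=0$ and $e_3\neq 0$, the residue triple is proportional to $\{1,\omega,\omega^2\}$, whose irrational ratios contradict integrality of the log-derivative multiplicities). The difference is in the middle step. The paper first fixes the ratios $A_i/A_j$ of the residues by essentially the same countability pigeonhole you use, and then eliminates: the constraints on the roots $(\alpha_1,\alpha_2,\alpha_3)$ become two hyperplanes and a quadric, and since infinitely many values of $c$ yield solutions, the line cut out by the hyperplanes must lie inside the quadric; the vanishing of the degree-two coefficient of the restriction is exactly $3e_2(A)=0$. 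You instead package everything into the single scale-invariant $S(c)=e_2^3/e_3^2$, which is a genuine single-valued rational function of $c$ because it is symmetric in the roots --- this neatly sidesteps the labelling/monodromy issue implicit in the paper's claim that the $A_i$ themselves can be taken independent of $c$ --- and you evaluate the constant via the Puiseux expansion of the roots at $c=\infty$, where the residue triple degenerates to the cube roots of unity; your asymptotics are right, since $1/\beta_i^2\propto\omega^{i-1}$. The one step to make fully explicit is the one you already flag: the pigeonhole against the countable set $\mathbb{Q}$ needs the set of good $c$ to be uncountable, which holds because the hypothesis is invariant under automorphisms of $\mathbb{C}$ over the coefficient field of $P$, so if it holds for one transcendental $c$ it holds for the whole (co-countable) set of them; the paper's own proof relies on the same input.
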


\begin{proof}
By contradiction, assume that such a polynomial $P(z)$ exists.  Without loss of generality, we can assume that $P(z)$ is monic and the constant coefficient of $P(z)$ is $0$.  So we write: 
$$ P(z) = z^3 + az^2 + bz.$$

This implies that the quotients of the residues do not depend on $c$ and therefore that there exists \textit{fixed} $A_1,A_2,A_3 \in \mathbb{C}^\ast$ such that for infinitely many values of $c$, 

$$(\ast): \frac 1 {P(z) + c} = e.(\frac{A_1}{z - \alpha_1} + \frac{A_2}{z - \alpha_2} + \frac{A_3}{z - \alpha_3})$$
for some $e \neq 0, \alpha_1,\ldots, \alpha_3$. So, choose $c$ such that $P(z) + c$ has simple roots (this holds for any $c$ independent from $a,b,$ for instance) and $A_1,A_2,A_3$ are the residues of $f_c(z)$. For $d$ close enough to $c$,  $P(z) + d$ also as simple roots $\beta_1,\ldots, \beta_3$ and if $B_1, B_2,B_3$ are the residues of $f_d(z)$ then 
$$B_2/B_1 = A_2/A_1 \text{ and } B_3/B_1 = A_3/A_1.$$

It follows that 
\begin{eqnarray*}
f_d(z) &= & \frac { B_1}{z - \beta_1} + \frac {B_2}{z - \beta_2} + \frac { B_3}{z - \beta_3} \\
& = & \frac{B_1} {A_1}\Big(\frac { A_1}{z - \beta_1} + \frac {A_2}{z - \beta_2} + \frac {A_3}{z - \beta_3} \Big)
\end{eqnarray*}

Up to replacing $e$ by $e.(A_1A_2A_3)$, we can assume that $A_1,A_2$ and $A_3$ have been chosen such that:
$$ (E_1): A_1A_2A_3 = 1.$$

With this normalization,  we claim that:
\begin{claim}
$A_1,A_2$ and $A_3$ are the three third roots of unity.  In particular, $A_1/A_2 \notin \mathbb{Q}$.
\end{claim}
\begin{proof}
It is enough to show that 
$$\begin{cases} A_1 + A_2 + A_3 = 0 \\
A_1A_2 + A_1A_3 + A_2A_3 = 0.
\end{cases}$$
since this implies that $(z - A_1)(z - A_2)(z - A_3) = z^3 - 1$. Note that  $\alpha_1,\ldots, \alpha_3$ must be the roots of $P(z) + c$ so we get the two equations: 

$$(S):\begin{cases}
\alpha_1 + \alpha_2 + \alpha_3 = a \\
\alpha_1 \alpha_2 + \alpha_2 \alpha_3 + \alpha_1 \alpha_3 = b.
\end{cases}$$

On the other hand, developing $(\ast)$ gives: 

\begin{eqnarray*}
& \frac 1 {(z-\alpha_1)(z - \alpha_2)(z - \alpha_3)}  = e. \frac{A_1(z - \alpha_2)(z - \alpha_3) + A_2(z - \alpha_1)(z - \alpha_3) + A_3(z-\alpha_1)(z - \alpha_2)}{(z-\alpha_1)(z - \alpha_2)(z - \alpha_3)} \\
& =  e.\frac{\Big(A_1 + A_2 + A_3\Big)z^2 - \Big(A_1(\alpha_2 + \alpha_3) + A_2(\alpha_1 + \alpha_3) + A_3(\alpha_1 + \alpha_2)\Big)z + \Big(A_1\alpha_2\alpha_3 + A_2 \alpha_1 \alpha_3 + A_3 \alpha_1 \alpha_2\Big)} {(z-\alpha_1)(z - \alpha_2)(z - \alpha_3)}
\end{eqnarray*}

The coefficients of $z^2$ and $z$ on the right hand side must therefore be $0$ and the constant coefficient must be equal to $1$. The last equation defines $e$ implicitly in terms of the other parameters so we won't be using it.  Next, consider the coefficient of $z^2$
$$(E_2): A_1 + A_2 + A_3 = 0.$$
The sum of the residues is $0$. The coefficient of $z$:
\begin{eqnarray*}
0 = A_1(\alpha_2 + \alpha_3) + A_2(\alpha_1 + \alpha_3) + A_3(\alpha_1 + \alpha_2) & = &  \\ 
\alpha_1 (A_2 + A_3) + \alpha_2 (A_1 + A_3) + \alpha_3(A_1 + A_2) & = &  \\
- \alpha_1 A_1 - \alpha_2 A_2 - \alpha_3 A_3
\end{eqnarray*}
where in the last equality we used $(E_2)$.

Together with the system $(S)$, this yields that $\alpha_1,\alpha_2,\alpha_3$ are solutions of the system of polynomial equations: 

$$(\overline{S}):\begin{cases}
X_1 + X_2 + X_3 = a \\
X_1X_2 + X_2X_3 + X_1X_3 = b \\
 A_1 X_1 +  A_2 X_2 + A_3 X_3 = 0
\end{cases}$$
 This is where we use our assumption: since this is true for infinitely many values of $c$,  this system must have infinitely many solutions so its set of solutions must have dimension $\geq 1$ (actually $= 1$).  This will give us our last equation on $A_1,A_2,A_3$:
 
Consider $q = (q_1,q_2,q_3)$ a common solution of the system above (since it has infinitely many solutions). The first equation and the last equations are equations of planes so they must intersect on a line $L$ of the form 

$$L = \lbrace q + \lambda v,  \lambda \in \mathbb{C} \rbrace$$
where the vector $v$ is given by 
$$ v = \begin{pmatrix} 1 \\ 1 \\ 1 \end{pmatrix} \wedge \begin{pmatrix} A_1 \\ A_2 \\ A_3 \end{pmatrix} = \begin{pmatrix}A_3 - A_2 \\ A_1 - A_3 \\ A_2 - A_1 \end{pmatrix} $$

So in order for the system $(\overline{S})$ to have infinitely solutions this line $(L)$ must be contained in the conic given by the second equation: 

$$ (q_1 + \lambda v_1)(q_2 + \lambda v_2) +  (q_2 + \lambda v_2)(q_3 + \lambda v_3) + (q_1 + \lambda v_1)(q_3 + \lambda v_3) =b$$
So the coefficient in $\lambda^2$ must vanish, which gives: 

\begin{eqnarray*} 
& 0  = v_1v_2 + v_2v_3 + v_1v_3  \\
& = (A_3 - A_2)(A_1 - A_3) + (A_1 - A_3)(A_2 - A_1) + (A_3 -A_2)(A_2 - A_1) \\
& = - (A_1^2 + A_2^2 + A_3^2) + A_1A_3 + A_1A_2 + A_2A_3 \\
& = -(A_1 + A_2 + A_3)^2 + 3(A_1A_3 + A_1A_2 + A_2A_3) \\
& = 3(A_1A_3 + A_1A_2 + A_2A_3)
\end{eqnarray*} 
where we used $(E_2)$ on the last line.  We conclude that 
$$(E_3): A_1A_2 + A_2A_3 + A_1A_3 = 0.$$
\end{proof}

To conclude the proof of the lemma, we use the following argument explained in the Example 2.20 of \cite{HrIt}: every (non constant)  $f(z) \in \mathbb{C}(z)$ can be written as: 
$$f(z) = \frac {(z - a_1) \ldots (z - a_n)} {(z- b_1)\ldots (z-b_m)} $$
By direct calculation, one can see that:
$$\frac{f'(z)}{f(z)} = \sum \frac{1}{z - a_i} - \sum \frac{1}{z - b_i} .$$
It follows that every logarihmic derivative has only simple poles with integer residues.  So if $g(z)$ is a constant multiple of a logarithmic derivative, then all poles are simple are the quotients of the residues are rational, but we've observed that is impossible.
\end{proof}

By combining the previous Lemma with Corollary \ref{logdercon}, we see that
\begin{cor}\label{3const}
For any polynomial $P(z)$ of degree $3$, then for generic $c \in \m C$ independent from the coefficients of $P$, $z'=P(z)+c$ is orthogonal to the constants. 
\end{cor}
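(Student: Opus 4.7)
The plan is to apply Rosenlicht's criterion (Theorem~\ref{rosorth}) to the order-one equation $z' = P(z)+c$ for a fixed but generic choice of $c \in \mathbb{C}$. The criterion reduces the question of non-orthogonality to the constants to an explicit algebraic condition on $\frac{1}{P(z)+c}$: it must be expressible either as $c_1 \frac{u'}{u}$ or as $c_1 v'$ for some constant $c_1 \in \mathbb{C}$ and some $u, v \in \mathbb{C}(z)$. Thus to prove orthogonality for generic $c$, I need to exclude both of these alternatives simultaneously, and each will be handled by one of the two preceding results.

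The derivative alternative is ruled out using the simple-roots claim inside the proof of Lemma~\ref{notader}. For $c$ generic over the coefficients of $P$, the polynomial $P(z)+c$ has only simple roots (the only obstruction being the finite set of critical values of $P$), so $\frac{1}{P(z)+c}$ has only simple poles. On the other hand, any rational function of the form $c_1 v'(z)$ has no simple poles at all, since a pole of $v$ of order $k \geq 1$ produces a pole of $v'$ of order $k+1 \geq 2$. This immediately excludes the second option in Rosenlicht's criterion.

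The logarithmic derivative alternative is precisely the content of the Lemma immediately preceding Corollary~\ref{3const}, which establishes that no degree-three polynomial $P(z)$ can satisfy that $\frac{1}{P(z)+c}$ is a constant multiple of a logarithmic derivative for generic $c$. Combining the two exclusions with Theorem~\ref{rosorth}, we conclude that $z' = P(z)+c$ is orthogonal to the constants for generic $c \in \mathbb{C}$. The only nontrivial ingredient is the preceding Lemma, whose residue-theoretic argument is the real work; the present corollary is essentially a packaging step that assembles Rosenlicht's dichotomy with the two exclusion lemmas.
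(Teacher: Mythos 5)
Your proposal is correct and follows essentially the same route as the paper: the paper derives this corollary by combining the preceding degree-three lemma with Corollary \ref{logdercon}, which is itself exactly the packaging of Rosenlicht's criterion (Theorem \ref{rosorth}) with the simple-poles argument of Lemma \ref{notader} that you spell out. Your observation that the real content lies in the residue-theoretic lemma, with the corollary being an assembly step, matches the paper's presentation.
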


\begin{prop} \label{genorth}
Suppose $a,b,c,d$ are algebraically independent over $\Q$.  Let $g(z)=z^3+az^2+bz$.  The  strongly minimal sets defined by $z^\prime=g(z)+c$ and $z^\prime=g(z)+d$ are orthogonal. 
\end{prop}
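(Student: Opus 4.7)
The plan is to adapt the invariant $1$-form method of Section \ref{5.2} to the two autonomous order-one equations $\mc E_c\colon z'=g(z)+c$ and $\mc E_d\colon z'=g(z)+d$. Both are strongly minimal and orthogonal to the constants by Corollary \ref{3const}. Assuming for contradiction that they are non-orthogonal, I obtain a closed irreducible $D_c\times D_d$-invariant curve $Z\subset \Proj^1\times \Proj^1$ that projects dominantly onto each factor. The Hrushovski--Itai invariant $1$-forms $\omega_c=\frac{dz_1}{g(z_1)+c}$ and $\omega_d=\frac{dz_2}{g(z_2)+d}$ pull back to $D_c\times D_d$-invariant $1$-forms on $\Proj^1\times \Proj^1$, and orthogonality of $\mc E_c$ and $\mc E_d$ to the constants forces the field of $D_c\times D_d$-constants in $\m C(Z)$ to equal $\m C$, so Lemma \ref{twovolumeforms} makes $\omega_c|_Z$ and $\omega_d|_Z$ proportional over $\m C$. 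The normalization $\omega_c(s_c\times s_d)=\omega_d(s_c\times s_d)=1$ then fixes the proportionality constant to $1$, yielding $(\omega_c-\omega_d)|_Z=0$.

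Expanding in partial fractions, let $\alpha_1,\alpha_2,\alpha_3$ be the roots of $g(z)+c$ and $\beta_1,\beta_2,\beta_3$ the roots of $g(z)+d$, and set $\gamma_i=1/g'(\alpha_i)$ and $\delta_j=1/g'(\beta_j)$. The residue theorem on $\Proj^1$ gives $\sum_i\gamma_i=\sum_j\delta_j=0$, so eliminating $\gamma_3$ and $\delta_3$ rewrites $(\omega_c-\omega_d)|_Z=0$ as
$$\gamma_1\,d\log\frac{z_1-\alpha_1}{z_1-\alpha_3}+\gamma_2\,d\log\frac{z_1-\alpha_2}{z_1-\alpha_3}-\delta_1\,d\log\frac{z_2-\beta_1}{z_2-\beta_3}-\delta_2\,d\log\frac{z_2-\beta_2}{z_2-\beta_3}=0$$
in $\Omega^1_Z$. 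Once $\{\gamma_1,\gamma_2,\delta_1,\delta_2\}$ is known to be $\Q$-linearly independent, Fact \ref{AxLemma} forces each of the four arguments above to be constant on $Z$, contradicting the dominance of the two projections.

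The main obstacle is the $\Q$-linear independence of $\{\gamma_1,\gamma_2,\delta_1,\delta_2\}$. The residues $\gamma_i$ lie in $\Q(a,b,c)^{\mathrm{alg}}$ and the $\delta_j$ in $\Q(a,b,d)^{\mathrm{alg}}$; since $c,d$ are algebraically independent over $\Q(a,b)$, a short transcendence-degree argument yields $\Q(a,b,c)^{\mathrm{alg}}\cap\Q(a,b,d)^{\mathrm{alg}}=\Q(a,b)^{\mathrm{alg}}$. So any potential relation $p\gamma_1+q\gamma_2=r\delta_1+s\delta_2=:v$ with $p,q,r,s\in\Q$ forces $v\in\Q(a,b)^{\mathrm{alg}}$; in particular, viewing $v$ as an algebraic function of $c$ with $a,b$ fixed, $v$ is constant in $c$. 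A Puiseux expansion at $c=\infty$ shows $\alpha_i\sim\zeta_i(-c)^{1/3}$ for the three cube roots of unity $\zeta_1,\zeta_2,\zeta_3$, and hence $\gamma_i\sim\zeta_i^{-2}/(3(-c)^{2/3})\to 0$, forcing $v=0$. Then $p\gamma_1+q\gamma_2=0$ holds identically in $c$, and reading off the leading $c^{-2/3}$ coefficient gives $p\zeta_1^{-2}+q\zeta_2^{-2}=0$; since $\zeta_1^{-2}/\zeta_2^{-2}$ is a nontrivial cube root of unity and hence irrational, $p=q=0$, and symmetrically $r=s=0$. This establishes the required linear independence and completes the proof.
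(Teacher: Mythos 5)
Your proof is correct. Its first half is essentially the paper's own reduction: the paper passes from non-orthogonality to the residue inequality $\ld_\Q(A_1,A_2,A_3,B_1,B_2,B_3)<4$ by citing Subsection \ref{5.2}, which is precisely the invariant $1$-form plus Ax--Rosenlicht argument you spell out, and your observation that the common value $v$ of the two sides of a rational relation lies in $\Q(a,b,c)^{alg}\cap\Q(a,b,d)^{alg}=\Q(a,b)^{alg}$ is the same step the paper carries out via finite satisfiability of $\tp(A_1,A_2,A_3/\Q(a,b)^{alg},B_1,B_2,B_3)$ using $c\ind_{\Q(a,b)^{alg}}d$. Where you genuinely diverge is in proving that $p\gamma_1+q\gamma_2\in\Q(a,b)^{alg}$ with $p,q\in\Q$ forces $p=q=0$ (the paper's Claim \ref{cl1}). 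The paper argues by elimination: a nontrivial relation, combined with the symmetric-function identities for $\alpha_1,\alpha_2,\alpha_3$, produces three polynomial equations over $\Q(a,b)^{alg}$ whose common zero set is shown to be finite by comparing top-degree homogeneous parts, contradicting the algebraic independence of the roots. You instead expand at $c=\infty$: in the Puiseux field over $\Q(a,b)^{alg}$ the roots are $\zeta_i(-c)^{1/3}(1+o(1))$ for the three distinct cube roots of unity, so each $\gamma_i=1/g'(\alpha_i)$ has strictly positive valuation in $1/c$, which forces $v=0$, and the leading coefficient of $p\gamma_1+q\gamma_2$ is then a rational combination of two distinct cube roots of unity, which vanishes only when $p=q=0$. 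Your argument is shorter and nicely parallel to the Puiseux technique already used in Theorem \ref{stminthm}, and it makes transparent why degree $3$ works (two distinct cube roots of unity are $\Q$-linearly independent); the paper's elimination argument does not rely on roots of unity and so does not hinge on that arithmetic coincidence. Both are complete; the one point to keep explicit in your write-up is that $\alpha_1$ and $\alpha_2$ do have \emph{distinct} leading coefficients $\zeta_1\neq\zeta_2$, which follows from the Newton polygon since the initial equation $t^3=1$ has simple roots.
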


\begin{proof}
 Let $\alpha_1,\dots,\alpha_3$ be the zeros of $g(z)+c$.  Then $\alpha_1,\dots,\alpha_3$ are algebraically independent.
  $${1\over g(z)+c}=\sum_{i=1}^3 {A_i\over z-\alpha_i}$$ where $$A_i={1\over\prod_{j\ne i}(\alpha_i-\alpha_j)}.$$ 
  
We have the linear relation $A_1+A_2+A_3=0$.   

\begin{claim} \label{cl1} 
If $m_1,m_2,m_3 \in \Q$ and $\sum m_i A_i \in \Q (a,b)^{alg}$, then $m_1=m_2=m_3$.
\end{claim}  
 
Suppose $\sum m_i A_i=\beta\in \Q(a,b)^\alg$.
\begin{eqnarray*}\beta \prod_{j<i}(\alpha_i-\alpha_j) &=& m_1(\alpha_2-\alpha_3)-m_2(\alpha_1-\alpha_3)+m_3(\alpha_1-\alpha_2) \\
&=& (m_3-m_2)\alpha_1+(m_1-m_3)\alpha_2+ (m_2-m_1)\alpha_3 
\end{eqnarray*}

If $m_1=m_2=m_3$, then we are left with the equation $\beta\prod_{j<i}(\alpha_j-\alpha_i)=0$.
Since $\alpha_1,\alpha_2,\alpha_3$ are algebraically independent we must also have $\beta=0$.  Otherwise we have a degree 3 polynomial over $\Q(a,b)^\alg$ vanishing at
$(\alpha_1,\alpha_2,\alpha_3)$.

Let's write the linear term
$\sum n_i\alpha_i$.
We now have the following system of equations over $\Q(a,b)^\alg$ satisfied by $\alpha_1,\alpha_2,\alpha_3$.
\begin{eqnarray*}
-a&=& z_1+z_2+z_3\\
b&=& z_1z_2+z_1z_3+z_2z_3\\
 \beta \prod_{j<i}(z_i-z_j) &=&nz_1+n_2z_2+ n_3 z_3\\
\end{eqnarray*}

Let $H$ be the hyperplane $z_1+z_2+z_3=-a$,  $V$ the surface $z_1z_2+z_1z_3+z_2z_3=b$,
and $W$ the surface  $\prod_{i<j}(z_i-z_j) =nz_1+n_2z_2+ n_3 z_3$.
We will show $H\cap V\cap W$ is finite.  But then $\alpha_1,\alpha_2,\alpha_3\in \Q(a,b)^\alg$,
a contradiction.

We make the substitution $z_3=-z_1-z_2-a$ into the defining equation for $V$ to get
$F(z_1,z_2)=0$ where 
$$F(z_1,z_2)=z_1^2+z_2^2+z_1z_2+az_1+az_2+b.$$
This is an irreducible polynomial.

Making the same substitution into the defining equation for $W$ we get $G(z_1,z_2)=0$
where  
$$G(z_1,z_2)=\beta [2z_1^3-2z_2^2+3z_1^2z_2-3z_1z_2^2]+ \hbox{ lower degree terms}$$

If $F(z_1,z_2)=G(z_1,z_2)=0$ has infinitely many solutions, then, since $F$ is irreducible,
we must have $F|G$.  But comparing the homogeneous parts of $F$ and $G$ of highest degree we see that is impossible, so we've established the claim. 

Now suppose the strongly minimal set $z^\prime=g(z)+c$ and $z^\prime=g(z)+d$ are non-orthogonal.
Let $B_1,\dots,B_n$ be the residues for $g(z)+d$.

By Subsection \ref{5.2} of this paper (or \cite[2.22]{HrIt}), 

$$\ld _\Q (A_1,A_2,A_3,B_1,B_2,B_3) < \ld _\Q (A_1,A_2,A_3) +\ld _\Q (B_1,B_2,B_3)=4.$$ Thus we have an equation $$\sum m_i A_i=\sum n_iB_i$$ where neither $m_1=m_2=m_3$ or 
$n_1=n_2=n_3$. 

Since $A_1,\dots,A_n$ are algebraic over $\Q(a,b,c)$ and  $c\dnf_{\Q(a,b)^\alg}d$,\break
$\tp(A_1,A_2,A_3/\Q(a,b)^\alg,B_1,B_2,B_3)$ is finitely satisfiable  in $\Q(a,b)^\alg$.
Thus we have $\sum m_i A_i\in \Q(a,b)^\alg$ contradicting Claim \ref{cl1}.
\end{proof}

We now derive the following model-theoretic consequence of Corollary \ref{3const} and Proposition \ref{genorth}.  

\begin{cor}
Let $f(z) = z^2 + az + b$ be a complex polynomial of degree $2$. Then the theory of the solution set of
$$(\star): z''/z' = f(z)$$
has the dimensional order property (DOP) and hence $2^\kappa$ isomorphism classes of models of cardinal $\kappa$ for every uncountable cardinal $\kappa$.
\end{cor}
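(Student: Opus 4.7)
The plan is to exhibit, from the equation $(\star)$, a definable family of pairwise orthogonal strongly minimal sets; use this family to produce a DOP configuration; and then invoke Shelah's theorem on DOP to conclude the stated $2^\kappa$-many isomorphism classes of models.

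I would first observe that the antiderivative of $f(z) = z^2 + az + b$ is a cubic polynomial $g(z) \in \mathbb{Q}(a,b)[z]$. By Theorem \ref{stminthm} the solution set of $(\star)$ has order $2$, and as discussed at the opening of Section \ref{nonmin} it is foliated by the order-one definable sets $X_c = \{z : z' = g(z) + c\}$ indexed by $c \in \mathbb{C}$. Two results of this section then supply exactly the structure needed: Corollary \ref{3const} gives that $X_c$ is strongly minimal and orthogonal to the constants whenever $c$ is generic over $\mathbb{Q}(a,b)^{alg}$; and Proposition \ref{genorth} (whose proof, after the harmless affine substitution $z \mapsto \lambda z + \mu$ normalizing $g$ to the form $z^3 + \alpha z^2 + \beta z$, in fact yields orthogonality for arbitrary $a,b \in \mathbb{C}$ and arbitrary $c,d$ generic and independent over $\mathbb{Q}(a,b)^{alg}$) gives that $X_c$ and $X_d$ are orthogonal in the same range.

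Next I would extract the DOP configuration as follows. Let $M_0$ be a prime differentially closed field over $\mathbb{Q}(a,b)^{alg}$; pick constants $c_1,c_2 \in \mathbb{C}$ generic and independent over $M_0$; and let $M_i$ be a prime differential closure of $M_0\langle c_i \rangle$, so that $M_1 \ind_{M_0} M_2$ by construction. The element $c_3 := c_1 + c_2 \in \mathbb{C}$ then lies in $\operatorname{dcl}(M_1 M_2)$, is generic in the constants over each $M_i$, and is algebraically independent from $c_1$ (resp.\ $c_2$) over $\mathbb{Q}(a,b)^{alg}$. Let $p$ be the generic type of $X_{c_3}$ over $M_1 M_2$: this type is strongly minimal, hence regular, and I claim $p \perp M_1$ and $p \perp M_2$. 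By symmetry treat $M_1$: using Zilber's trichotomy (Fact \ref{trichotomy}), orthogonality to $M_1$ reduces to checking $p$ against strongly minimal types over $M_1$ in each of the three trichotomy classes. Non-orthogonality to the constants is excluded by Corollary \ref{3const}; non-orthogonality to a family type $X_c$ with $c \in M_1$ is excluded by Proposition \ref{genorth} applied to the algebraically independent pair $(c_3, c)$; and the remaining cases are to be excluded by a geometric-triviality argument for $p$.

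The main obstacle is this last step: ruling out non-orthogonality of $p$ to any abelian-variety type or any ``exotic'' geometrically trivial type over $M_1$ not already coming from the family $\{X_c\}$. For an autonomous rank-one equation of the form $z' = P(z)$ with $P$ a polynomial, such non-orthogonalities should not occur (the equation is expected to be geometrically trivial and not associated with an abelian variety), but the cleanest formalization is probably to invoke the standard stability-theoretic principle that a definable family of pairwise orthogonal strongly minimal types indexed by a strongly minimal base automatically witnesses DOP. Once the double orthogonality $p \perp M_1$, $p \perp M_2$ is in hand, $p$ is a regular type over $M_1 \cup M_2$ orthogonal to both $M_i$, so the theory of $(\star)$ has DOP, and Shelah's theorem then delivers $2^\kappa$ isomorphism classes of models of cardinality $\kappa$ for every uncountable $\kappa$.
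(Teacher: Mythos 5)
Your overall architecture matches the paper's: normalize so that the antiderivative $g$ is a cubic, use Corollary \ref{3const} and Proposition \ref{genorth} for the fibers $z'=g(z)+c$, build the configuration $M_0\subset M_1,M_2$ from independent generic constants $c_1,c_2$, and take for the witnessing regular type the generic type $p$ of the fiber at $c_3=c_1+c_2$. But there is a genuine gap exactly where you flag one: establishing $p\perp M_1$. Orthogonality to the model $M_1$ means orthogonality to \emph{every} type over $M_1$, and your trichotomy case-check only disposes of nonorthogonality to the constants and to the other fibers $X_c$ with $c\in M_1$; it leaves open nonorthogonality to a Manin-kernel type or to some trivial minimal type over $M_1$ not of the form $X_c$, and you supply no argument for those cases. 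The fallback you propose --- that a definable family of pairwise orthogonal strongly minimal sets indexed by the constants ``automatically'' witnesses DOP --- is essentially the statement to be proved; its proof is precisely the missing step, so invoking it as a black box does not close the gap.

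The paper closes the gap differently, and the argument is worth internalizing. First reduce to showing $q_{c_3}\perp \mathbb{Q}(a,b)^{alg}$: since $c_3$ is a constant transcendental over $M_1$, one has $c_3\ind_{\mathbb{Q}(a,b)^{alg}}M_1$, and a regular type orthogonal to a base set, whose domain is independent from a model over that base, is orthogonal to the model. Then argue by contradiction: $q_c$ is strongly minimal and orthogonal to the constants by Corollary \ref{3const}, hence one-based by the trichotomy, so if $q_c\not\perp\mathbb{Q}(a,b)^{alg}$ there is a \emph{minimal} type $q_0$ over $\mathbb{Q}(a,b)^{alg}$ with $q_c\not\perp q_0$. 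Any automorphism fixing $\mathbb{Q}(a,b)^{alg}$ pointwise and sending $c$ to another constant $d$ transcendental over $\mathbb{Q}(a,b)$ carries $q_c$ to $q_d$ and fixes $q_0$, so $q_d\not\perp q_0$ as well; transitivity of nonorthogonality on minimal types then forces $q_c\not\perp q_d$ for \emph{all} such $c,d$, contradicting Proposition \ref{genorth}. This one-basedness-plus-transitivity step is the idea your write-up is missing; with it in place, the rest of your configuration goes through essentially as you describe.
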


Recall that a complete totally transcendental theory $T$ has the dimensional order property (DOP) if there are models $M_0 \subset M_1,M_2$ with $M_1 \ind_{M_0} M_2$ and a regular type $q$ with parameters in the prime model over $M_1 \cup M_2$ such that $q$ is orthogonal to $M_1$ and $M_2$.  It is well-known that if $T$ has the DOP then $T$ has $2^\kappa$ isomorphism classes of models of cardinal $\kappa$ for every $\kappa \geq \aleph_1 + \mid T \mid$.
\begin{proof}
First note that if $\alpha \neq 0$ then $y \mapsto \alpha y$ gives a definable bijection between the solution sets of $z''/z' = f(z)$ and $z''/z' = f(z/\alpha)$.  Choosing $\alpha = 1/\sqrt{3}$, we can assume that $f(z)$ is of the form
$$f(z) = 3z^2 + \sqrt{3}az + b$$
Set $g(z) = z^3 + \frac{a\sqrt{3}}{2}z^2+ bz$ and let $c$ be a transcendental constant over $\mathbb{Q}(a,b)$.  We claim that  the generic type $q_c \in S(\mathbb{Q}(a,b,c))$ of
$$ z' = g(z) + c.$$
is orthogonal to $\mathbb{Q}(a,b)^{alg}$:  assume that $q_c$ is non-orthogonal to $\mathbb{Q}(a,b)^{alg}$.  Since $q_c$ is strongly minimal and orthogonal to the constants by Corollary \ref{3const}, $q_c$ is one based. It follows that there exists a \emph{minimal} type $q_0 \in S(\mathbb{Q}(a,b)^{alg})$ non-orthogonal to $q_c$.  Moreover,  any copy $q_{d}$ of $q_c$ for every transcendental constant $d$ over $\mathbb{Q}(a,b)$ is also non-orthogonal to $q_0$.  

By transitivity of the non-orthogonality relation for minimal types,  the types $q_c$ and $q_{d}$ are non-orthogonal whenever $c$ and $d$ are transcendental constant over $\mathbb{Q}(a,b)$.  This contradicts Proposition \ref{genorth}, hence $p$ is orthogonal to $\mathbb{Q}(a,b)^{alg}$.

We conclude as in Chapter 3, Corollary 2.6 of \cite{MMP} that the theory of the solution set of $(\star)$ has the DOP: consider $M_0$ the prime model over $\mathbb{Q}(a,b)$,  $c$ and $d$ independent transcendental constants over $\mathbb{Q}(a,b)$ and denote by $M_1$ (resp.  $M_2$) the prime model over $\mathbb{Q}(a,b,c)$ (resp. $\mathbb{Q}(a,b,d)$).  

Set $e = c + d$ and $q = q_e$. We claim that $q_e$ is orthogonal to both $M_1$ and $M_2$: since $e$ is a transcendental constant over $M_1$, we have that: 
$$e \ind_{\mathbb{Q}(a,b)^{alg}} M_1 \text{ and } q_e \text{ orthogonal to } \mathbb{Q}(a,b)^{alg}$$
from which it follows that $q_e$ is orthogonal to $M_1$. Similarly,  $q_e$ is orthogonal to $M_2$, hence the theory of the solution set of $(\star)$ has the DOP and hence the maximal number of isomorphism classes of models in any uncountable cardinal.  
\end{proof}
In particular,  from our analysis of a specific autonomous second order equation,  we recover Shelah's theorem in \cite{shelah1973differentially} which asserts that the theory $\textbf{DCF}_0$ admits the maximal number  of isomorphism classes of models in any given uncountable cardinal.  While Shelah's proof uses differentially transcendental elements,  it was already noticed by Poizat in \cite[pp. 10]{poizat1980c} that the DOP is also witnessed by families of algebraic differential equations parametrized by constants such as: 

$$ (x' = {cx\over 1+x}, c \in {\mathcal C}^\times).$$

\begin{rem} In the same vein, it is interesting to note that our results allows us to compute effectively the oldest model-theoretic invariant --- the function $\kappa \mapsto I(\kappa)$ which counts the isomorphism classes of models of cardinal $\kappa$ --- for the solution sets of equations of the form $(\star)$. More precisely, if  $T_f$  denotes the theory of the solution set of $y''/y' = f(y)$ and $I(\kappa,T_f)$ counts the number of isomorphism classes of models of $T_f$ of cardinal $\kappa$ then:
\begin{itemize}
\item[(1)] The rational function $f(z)$ is a derivative in $\mathbb{C}(z)$ if and only if $I(\kappa,T_f) = 1$ for all infinite cardinals $\kappa$.

\item[(2)] If $f(z)$ is constant or a linear polynomial then  $I(\kappa,T_f) = 1$ for all uncountable cardinals $\kappa$ but $I(\aleph_0,T_f) = \aleph_0$.

\item[(3)] If $f(z)$ is a polynomial of degree $2$ then $I(\kappa,T_f) = 2^\kappa$ for every uncountable cardinal $\kappa$.
\end{itemize}
From this perspective,  it would be interesting to show that no other function $\kappa \mapsto I(\kappa)$ can occur in the family $(\star)$ or equivalently that the theory of the solution set of any differential equation of the form $(\star)$ which is not analyzable in the constants nor strongly minimal admits the maximal number of isomorphism classes of models in every uncountable cardinal. 
\end{rem}

\bibliography{Research}{}

\begin{thebibliography}{10}

\bibitem{transformLien}
W.~Albarakati, N.~Lloyd, and J.~Pearson.
\newblock Transformation to {L}i\'enard form.
\newblock {\em Electronic Journal of Differential Equations}, 76:1--11, 2000.

\bibitem{ax1971schanuel}
James Ax.
\newblock On {S}chanuel's conjectures.
\newblock {\em Annals of mathematics}, pages 252--268, 1971.

\bibitem{brestovski1982deviation}
M~Brestovski.
\newblock Deviation et independance algebrique de solutions generiques
  d'equations differentielles du second ordre.
\newblock {\em CR Acad. Sc. Paris}, 294(1):609--612, 1982.

\bibitem{brestovski1989algebraic}
Michel Brestovski.
\newblock Algebraic independence of solutions of differential equations of the
  second order.
\newblock {\em Pacific Journal of Mathematics}, 140(1):1--19, 1989.

\bibitem{BUECHLER2008135}
Steven Buechler.
\newblock Vaught's conjecture for superstable theories of finite rank.
\newblock {\em Annals of Pure and Applied Logic}, 155(3):135--172, 2008.

\bibitem{casale2020ax}
Guy Casale, James Freitag, and Joel Nagloo.
\newblock Ax-{L}indemann-{W}eierstrass with derivatives and the genus 0
  {F}uchsian groups.
\newblock {\em Annals of Mathematics}, 192(3):721--765, 2020.

\bibitem{Cheze2021}
Guillaume Ch\`eze and Thomas Cluzeau.
\newblock On the nonexistence of liouvillian first integrals for generalized
  {L}i\'enard polynomial differential systems.
\newblock {\em Journal of Nonlinear Mathematical Physics}, 20:475--479, 2021.

\bibitem{christopher1999liouvillian}
Colin Christopher.
\newblock Liouvillian first integrals of second order polynomial differential
  equations.
\newblock {\em Electronic Journal of Differential Equations}, (49):1--7, 1999.

\bibitem{MR3808495}
Maria~V. Demina.
\newblock Invariant algebraic curves for {L}i\'{e}nard dynamical systems
  revisited.
\newblock {\em Appl. Math. Lett.}, 84:42--48, 2018.

\bibitem{demina2021integrability}
Maria~V Demina.
\newblock Integrability and solvability of polynomial li$\backslash$'$\{$e$\}$
  nard differential systems.
\newblock {\em arXiv preprint arXiv:2110.14306}, 2021.

\bibitem{demina2021method}
Maria~V Demina.
\newblock The method of {P}uiseux series and invariant algebraic curves.
\newblock {\em Communications in Contemporary Mathematics}, page 2150007, 2021.

\bibitem{MR4190110}
Maria~V. Demina and Claudia Valls.
\newblock On the {P}oincar\'{e} problem and {L}iouvillian integrability of
  quadratic {L}i\'{e}nard differential equations.
\newblock {\em Proc. Roy. Soc. Edinburgh Sect. A}, 150(6):3231--3251, 2020.

\bibitem{devilbiss2021generic}
Matthew DeVilbiss and James Freitag.
\newblock Generic differential equations are strongly minimal.
\newblock {\em arXiv preprint arXiv:2106.02627}, 2021.

\bibitem{duarte2009finding}
LGS Duarte and LACP da~Mota.
\newblock Finding elementary first integrals for rational second order ordinary
  differential equations.
\newblock {\em Journal of mathematical physics}, 50(1):013514, 2009.

\bibitem{Eisenbud}
David Eisenbud.
\newblock Commutative algebra with a view toward algebraic geometry.
\newblock {\em Graduate Texts in Mathematics}, 150, 1995.

\bibitem{feng2001algebraic}
Zhaosheng Feng.
\newblock Algebraic curve solution for second-order polynomial autonomous
  systems.
\newblock {\em The Electronic Journal of Linear Algebra}, 8:14--25, 2001.

\bibitem{feng2002explicit}
Zhaosheng Feng.
\newblock On explicit exact solutions for the {L}i\'enard equation and its
  applications.
\newblock {\em Physics Letters A}, 293(1-2):50--56, 2002.

\bibitem{feng2004exact}
Zhaosheng Feng.
\newblock Exact solutions to the {L}i{\'e}nard equation and its applications.
\newblock {\em Chaos, Solitons \& Fractals}, 21(2):343--348, 2004.

\bibitem{freitag2012model}
James Freitag.
\newblock {\em Model theory and differential algebraic geometry}.
\newblock PhD thesis, University of Illinois, 2012.

\bibitem{freitag2021not}
James Freitag.
\newblock Not {P}faffian.
\newblock {\em arXiv preprint arXiv:2109.09230}, 2021.

\bibitem{freitag2017finiteness}
James Freitag and Rahim Moosa.
\newblock Finiteness theorems on hypersurfaces in partial
  differential-algebraic geometry.
\newblock {\em Advances in Mathematics}, 314:726--755, 2017.

\bibitem{gine2011weierstrass}
Jaume Gin{\'e} and Jaume Llibre.
\newblock Weierstrass integrability in {L}i{\'e}nard differential systems.
\newblock {\em Journal of Mathematical Analysis and Applications, 2011, vol.
  377, n{\'u}m. 1, p. 362-369}, 2011.

\bibitem{harko2014class}
Tiberiu Harko, Francisco~SN Lobo, and MK~Mak.
\newblock A class of exact solutions of the {L}i{\'e}nard-type ordinary
  nonlinear differential equation.
\newblock {\em Journal of Engineering Mathematics}, 89(1):193--205, 2014.

\bibitem{Hartshorne}
Robin Hartshorne.
\newblock {\em Algebraic Geometry}.
\newblock Graduate Texts in Mathemematics 52. Springer-Verlag, 1977.

\bibitem{HrushovskiSokolovic}
E.~Hrushovski and Z.~Sokolovic.
\newblock Minimal sets in differential fields.

\bibitem{HrIt}
Ehud Hrushovski and Masanori Itai.
\newblock On model complete differential fields.
\newblock {\em Transactions of the American Mathematical Society}, Volume
  355(11):4267--4296, 2003.

\bibitem{HPnfcp}
Ehud Hrushovski and Anand Pillay.
\newblock Effective bounds for the number of transcendental points on
  subvarieties of semi-abelian varieties.
\newblock {\em American Journal of Mathematics}, 122(3):439--450, 2000.

\bibitem{jaoui2019generic}
R{\'e}mi Jaoui.
\newblock Generic planar algebraic vector fields are disintegrated.
\newblock {\em To appear in Algebra and Number Theory, arXiv preprint
  arXiv:1905.09429}, 2019.

\bibitem{jaoui2020corps}
R{\'e}mi Jaoui.
\newblock Corps diff\'erentiels et flots g\'eod\'esiques {I}: Orthogonalit\'e
  aux constantes pour les\'equations diff\'erentielles autonomes.
\newblock {\em Bulletin de la {SMF}}, 148(3):529--595, 2020.

\bibitem{jaoui2020foliations}
R\'{e}mi Jaoui.
\newblock Rational factors, invariant foliations and algebraic disintegration
  of compact mixing {A}nosov flows of dimension 3.
\newblock {\em Confluentes Math.}, 12(2):49--78, 2020.

\bibitem{jin2020internality}
Ruizhang Jin and Rahim Moosa.
\newblock Internality of logarithmic-differential pullbacks.
\newblock {\em Transactions of the American Mathematical Society},
  373(7):4863--4887, 2020.

\bibitem{KONG1995301}
{De-xing} Kong.
\newblock Explicit exact solutions for the {L}i\'enard equation and its
  applications.
\newblock {\em Physics Letters A}, 196(5):301--306, 1995.

\bibitem{lang1999fundamentals}
Serge Lang.
\newblock {\em Fundamentals of differential geometry}, volume 191.
\newblock Springer-Verlag, New York, 1999.

\bibitem{Leonlarge}
Omar Le\'on~S\'anchez and Marcus Tressl.
\newblock Differentially large fields.
\newblock {\em arXiv preprint arXiv:2005.00888}, 2020.

\bibitem{Lie1}
A.~Li\'enard.
\newblock { \'E}tude des oscillations entret\'enues.
\newblock {\em Revue g\'en\'erale de l'\'electricit\'e}, 23:901--912, 1928.

\bibitem{Lie2}
A.~Li\'enard.
\newblock { \'E}tude des oscillations entret\'enues.
\newblock {\em Revue g\'en\'erale de l'\'electricit\'e}, 23:946--954, 1928.

\bibitem{10.2307/20764280}
Jaume Llibre and Cl\`audia Valls.
\newblock Liouvillian first integrals for {L}i\'enard polynomial differential
  systems.
\newblock {\em Proceedings of the American Mathematical Society},
  138(9):3229--3239, 2010.

\bibitem{LlibreValls+2013+825+835}
Jaume Llibre and Cl{\`a}udia Valls.
\newblock Liouvillian first integrals for generalized {L}i{\'e}nard polynomial
  differential systems.
\newblock {\em Advanced Nonlinear Studies}, 13(4):825--835, 2013.

\bibitem{MR3573730}
Jaume Llibre and Cl\`audia Valls.
\newblock Liouvillian first integrals for a class of generalized {L}i\'{e}nard
  polynomial differential systems.
\newblock {\em Proc. Roy. Soc. Edinburgh Sect. A}, 146(6):1195--1210, 2016.

\bibitem{MR3563433}
Jaume Llibre, Claudia Valls, and Xiang Zhang.
\newblock Liouvillian integrability versus {D}arboux polynomials.
\newblock {\em Qual. Theory Dyn. Syst.}, 15(2):503--515, 2016.

\bibitem{MR2430656}
Jaume Llibre and Xiang Zhang.
\newblock On the algebraic limit cycles of {L}i\'{e}nard systems.
\newblock {\em Nonlinearity}, 21(9):2011--2022, 2008.

\bibitem{MR2902728}
Jaume Llibre and Xiang Zhang.
\newblock On the {D}arboux integrability of polynomial differential systems.
\newblock {\em Qual. Theory Dyn. Syst.}, 11(1):129--144, 2012.

\bibitem{Marker96modeltheory}
David Marker.
\newblock Model theory of differential fields.
\newblock In {\em Lecture Notes in Logic 5}. Springer, 1996.

\bibitem{MMP}
David Marker, Margit Messmer, and Anand Pillay.
\newblock {\em Model theory of fields}.
\newblock A. K. Peters/CRC Press, 2005.

\bibitem{mcgrail2000search}
Tracey McGrail.
\newblock The search for trivial types.
\newblock {\em Illinois Journal of Mathematics}, 44(2):263--271, 2000.

\bibitem{moosa2014model}
Rahim Moosa and Thomas Scanlon.
\newblock Model theory of fields with free operators in characteristic zero.
\newblock {\em Journal of Mathematical Logic}, 14(02):1450009, 2014.

\bibitem{nagloo2015geometric}
Joel Nagloo.
\newblock Geometric triviality of the strongly minimal second {P}ainlev{\'e}
  equations.
\newblock {\em Annals of Pure and Applied Logic}, 166(3):358--368, 2015.

\bibitem{nagloo2011algebraic}
Joel Nagloo and Anand Pillay.
\newblock On algebraic relations between solutions of a generic {P}ainlev{\'e}
  equation.
\newblock {\em Journal f{\"u}r die reine und angewandte Mathematik (Crelles
  Journal)}.

\bibitem{nagloo2014algebraic}
Joel Nagloo and Anand Pillay.
\newblock On the algebraic independence of generic {P}ainlev{\'e}
  transcendents.
\newblock {\em Compositio Mathematica}, 150(04):668--678, 2014.

\bibitem{nishioka1990painleve}
Keiji Nishioka.
\newblock Painlev{\'e}'s theorem on automorphic functions.
\newblock {\em Manuscripta Mathematica}, 66(1):341--349, 1990.

\bibitem{nishiokaII}
Keiji Nishioka.
\newblock Painlev{\'e}’s theorem on automorphic functions {II}.
\newblock {\em Funkcial. Ekvac}, 35:597--602, 1992.

\bibitem{Nishioka2}
Keiji Nishioka.
\newblock Algebraic independence of {P}ainlev\'e first transcendents.
\newblock {\em Funkcialaj Ekvacioj}, 47:351--360, 2004.

\bibitem{ODANI1995146}
K.~Odani.
\newblock The limit cycle of the van der {P}ol equation is not algebraic.
\newblock {\em Journal of Differential Equations}, 115(1):146--152, 1995.

\bibitem{GST}
Anand Pillay.
\newblock {\em Geometric Stability Theory}.
\newblock Oxford University Press, 1996.

\bibitem{PillayNotes}
Anand Pillay.
\newblock Lecture notes - stability theory.
\newblock {\em unpublished lecture notes}, 2003.

\bibitem{PillayZiegler}
Anand Pillay and Martin Ziegler.
\newblock Jet spaces of varieties over differential and difference fields.
\newblock {\em Selecta Math. (N. S.)}, 9 (4):579--599, 2003.

\bibitem{poizat1977rangs}
Bruno Poizat.
\newblock Rangs des types dans les corps diff{\'e}rentiels.
\newblock {\em Groupe d'{\'e}tude de th{\'e}ories stables}, 1:1--13.

\bibitem{poizat1980c}
Bruno Poizat.
\newblock C'est beau et chaud.
\newblock {\em Groupe d'{\'e}tude de th{\'e}ories stables}, 3:1--11, 1980.

\bibitem{prelle1983elementary}
Myra~Jean Prelle and Michael~F Singer.
\newblock Elementary first integrals of differential equations.
\newblock {\em Transactions of the American Mathematical Society},
  279(1):215--229, 1983.

\bibitem{rosenlicht1969explicit}
Maxwell Rosenlicht.
\newblock On the explicit solvability of certain transcendental equations.
\newblock {\em Publications Math{\'e}matiques de l'Institut des Hautes
  {\'E}tudes Scientifiques}, 36(1):15--22, 1969.

\bibitem{notmin}
Maxwell Rosenlicht.
\newblock The nonminimality of the differential closure.
\newblock {\em Pacific J. Math.}, 52:529 -- 537, 1974.

\bibitem{Rosenlitch2}
Maxwell Rosenlicht.
\newblock On {L}iouville's theory of elementary functions.
\newblock {\em Pacific Journal of Mathematics}, 65:485--492, 1976.

\bibitem{shelah1973differentially}
Saharon Shelah.
\newblock Differentially closed fields.
\newblock {\em Israel Journal of Mathematics}, 16(3):314--328, 1973.

\bibitem{MR1062869}
Michael~F. Singer.
\newblock Liouvillian first integrals of differential equations.
\newblock {\em Trans. Amer. Math. Soc.}, 333(2):673--688, 1992.

\bibitem{tent2012course}
K.~Tent and M.~Ziegler.
\newblock {\em A Course in Model Theory}.
\newblock Lecture Notes in Logic. Cambridge University Press, 2012.

\bibitem{MR1433130}
Henryk \.{Z}o\l \c{a}dek.
\newblock Algebraic invariant curves for the {L}i\'{e}nard equation.
\newblock {\em Trans. Amer. Math. Soc.}, 350(4):1681--1701, 1998.

\end{thebibliography}
\bibliographystyle{plain}
\end{document}